\newtheorem{theorem}{Theorem}
\newtheorem{corollary}{Corollary}
\newtheorem{definition}{Definition}
\newtheorem{example}{Example}
\newtheorem{lemma}{Lemma}
\newtheorem{proposition}{Proposition}
\newtheorem{remark}{Remark}
\numberwithin{equation}{section}
\DeclareMathOperator{\Tr}{Tr}
\DeclareMathOperator{\supp}{supp}
\DeclareMathOperator{\vect}{vec}
\DeclareMathOperator{\blkdiag}{blkdiag}
\DeclareMathOperator{\nullsp}{null}
\DeclareMathOperator{\polylog}{polylog}
\newcommand{\calA}{\ensuremath{\mathcal{A}}}
\newcommand{\calB}{\ensuremath{\mathcal{B}}}
\newcommand{\calC}{\ensuremath{\mathcal{C}}}
\newcommand{\calF}{\ensuremath{\mathcal{F}}}
\newcommand{\calP}{\ensuremath{\mathcal{P}}}
\newcommand{\calS}{\ensuremath{\mathcal{S}}}
\newcommand{\calN}{\ensuremath{\mathcal{N}}}
\newcommand{\calT}{\ensuremath{\mathcal{T}}}
\newcommand{\calU}{\ensuremath{\mathcal{U}}}
\newcommand{\calK}{\ensuremath{\mathcal{K}}}
\newcommand{\calE}{\ensuremath{\mathcal{E}}}
\newcommand{\norm}[1]{\left\|{#1}\right\|}
\newcommand{\normnew}[1]{\|{#1}\|}
\newcommand{\abs}[1]{\left|{#1}\right|}
\newcommand{\set}[1]{\left\{{#1}\right\}}
\newcommand{\dotprod}[2]{\langle#1,#2\rangle}
\newcommand{\est}[1]{\widehat{#1}}
\newcommand{\expec}{\ensuremath{\mathbb{E}}}
\newcommand{\matR}{\ensuremath{\mathbb{R}}}
\newcommand{\argmin}[1]{\underset{#1}{\operatorname{argmin}}}
\newcommand{\prob}{\ensuremath{\mathbb{P}}}
\newcommand{\rev}[1]{\textcolor{black}{#1}} 
\newcommand{\Atil}{\widetilde{A}}
\newcommand{\Xtil}{\widetilde{X}}
\newcommand{\Qtil}{\widetilde{Q}}
\newcommand{\Dtil}{\widetilde{D}}
\newcommand{\Vbar}{\overline{V}}
\newcommand{\vbar}{\overline{v}}
\newcommand{\Pitil}{\widetilde{\Pi}}
\newcommand{\xtil}{\widetilde{x}}
\newcommand{\ones}{\ensuremath{\mathbf{1}}} 
\newcommand{\invfac}{\mu}
\newcommand{\etatil}{\widetilde{\eta}}
\newcommand{\unitsph}{\mathbb{S}^{md^2-1}}
\newcommand{\Dtilacomp}{(\Dtil a^{*})_{\calS^c}}
\newcommand{\Hbar}{\overline{H}}
\newcommand{\Gbar}{\overline{G}}
\newcommand{\Fbar}{\overline{F}}
\newcommand{\ols}{\text{OLS}}
\newcommand{\calCps}{\mathcal{C}'_{\calS}}
\newcommand{\scale}{r}
\title{Joint learning of a network of linear dynamical systems via total variation penalization\footnote{Authors are listed alphabetically}}
 \author{
 Claire Donnat$^{1}$ \\ \texttt{cdonnat@uchicago.edu}\and
 Olga Klopp$^{2}$\\ \texttt{Olga.KLOPP@math.cnrs.fr} \and
 Hemant Tyagi$^{3}$\\ \texttt{hemant.tyagi@ntu.edu.sg}
 } 
 \date{
 $^1$ University of Chicago, Chicago, IL 60637, USA  \\
 $^2$ESSEC Business School,  95021 Cergy-Pontoise, France \\
 $^3$Division of Mathematical Sciences, 
       SPMS, 
       NTU Singapore 637371
 \newline\newline
 \today
 }
\begin{document}
\maketitle

\begin{abstract}
We consider the problem of joint estimation of the parameters of $m$ linear dynamical systems, given access to single realizations of their respective trajectories, each of length $T$. The linear systems are assumed to reside on the nodes of an undirected and connected graph $G = ([m], \mathcal{E})$, and the system matrices are assumed to either vary smoothly or exhibit small number of ``jumps'' across the edges. We consider a total variation penalized least-squares estimator and derive non-asymptotic bounds on the mean squared error (MSE) which hold with high probability. In particular, the bounds imply for certain choices of well connected $G$ that the MSE goes to zero as $m$ increases, even when $T$ is constant. The theoretical results are supported by extensive experiments on synthetic and real data.
\end{abstract}

\textbf{Keywords:} vector autoregression (VAR), total variation penalty, linear dynamical systems, federated learning, networks

\section{Introduction} \label{sec:intro}
The problem of learning a linear dynamical system\footnote{Also referred to as a vector autoregressive (VAR) model in the literature.} (LDS) from samples of its trajectory has many applications, e.g., in control systems, time-series analysis and reinforcement learning. In its simplest form, the state $x_t \in \matR^d$ of the (autonomous) system evolves over $T$ time-steps as
\begin{align*}
    x_{t+1} = A^* x_t + \eta_{t+1}; \quad t=0,\dots,T
\end{align*}
and the goal is to estimate the $d \times d$ matrix $A^*$ from a realization $(x_t)_{t=0}^{T+1}$. Here, $(\eta_{t})_{t=0}^{T+1}$ are unobserved, and typically centered and independent random vectors, often referred to as the process noise or the ``excitation'' of the system. The problem has a rich history in the control systems and statistics communities, with classical works typically consisting of either asymptotic results (e.g., \cite{LaiWei82, LaiWei83, LaiWei1986}) or involve quantities that are exponential in the degree of the system (e.g. \cite{Campi2002, VidyaSagar2008}).

A recent line of work (\cite{FaraUnstable18, Simchowitz18a, Sarkar19, Jedra20}) has focused on finite-time identification of LDSs where the goal is to provided non-asymptotic error guarantees for recovering $A^*$. Here, the ordinary-least-squares (OLS) is analyzed theoretically and error bounds are derived w.r.t the spectral norm $\normnew{\est{A}-A^*}_2$ --  the bounds holding provided $T$ is large enough. A typical assumption made in these results is that $A^*$ is stable, i.e., its spectral radius is either strictly less than one (e.g., \cite{Jedra20}) or even more broadly, is less than or equal to one (e.g., \cite{Simchowitz18a, Sarkar19}). In both these scenarios, the error rate is typically shown to go to zero at the near-parametric rate\footnote{Here, $\tilde{O}(\cdot)$ hides logarithmic factors} $\tilde{O}(1/\sqrt{T})$ provided $T$ is suitably large (typically polynomially large w.r.t $d$). The result in \cite{Jedra20} implies that with probability at least $1-\delta$, 
\begin{equation*}
       \norm{\est{A} - A^*}_2 \lesssim \sqrt{\frac{\log(1/\delta) + d}{T}} \quad \text{ if } \quad T \gtrsim  \Delta(\log(1/\delta) + d), 
\end{equation*}
where $\Delta$ is finite if the spectral radius of $A^*$ is less than one. Moreover, $\Delta$ is a constant if $\normnew{A^*}_2 < 1$. The above bound is also known to be optimal in terms of the dependence on $\delta, d$ and $T$ (\cite{Simchowitz18a}).

\subsection{Learning multiple LDSs via total variation (TV) penalization}
In this paper, we consider the problem of joint identification of $m$ LDSs. More precisely, for a given undirected (and connected) graph $G = ([m],\calE)$, we consider that at node $l \in [m]$, the state $x_{l,t} \in \matR^d$ evolves as
%
\begin{equation} \label{eq:mult_linsys}
    x_{l,t+1} = A^*_{l} x_{l,t} + \eta_{l,t+1}; \quad t=0,\dots,T, \ x_{l,0} = 0,
\end{equation}
where $\eta_{l,t}$ are assumed to be i.i.d centered standard Gaussian vectors. Furthermore, we assume that the matrices $A^*_l$'s either vary smoothly (i.e., $\sum_{\set{l,l'} \in \calE} \norm{A^*_l - A^*_{l'}}_{1,1}$, is\footnote{Here, $\norm{\cdot}_{1,1}$ is the entry-wise $\ell_1$ norm; see Section \ref{subsec:notation} for notation.} small) or exhibit only a small number of jumps across edges (i.e., $(A^*_l - A^*_{l'})
_{l \neq l'}$, has a small number of non-zeroes). 
Given the data $(x_{l,t})_{t=0}^T$ for each $l \in [m]$, we then obtain estimates $\est{A}_l$ of $A^*_l$ for each $l$, via the TV-penalized least squares estimator 
\begin{equation} \label{eq:tv_pen_ls_matform}
    (\est{A}_1,\dots,\est{A}_m) = \argmin{A_1,\dots,A_m \in \matR^{d \times d}} \ \set{\frac{1}{2m}\sum_{{l}=1}^m {\sum_{t=1}^T}\norm{x_{l,{t+1}} - A_l x_{l,t}}_F^2 + \lambda \sum_{\set{l,l'} \in \calE} \norm{A_{l} - A_{l'}}_{1,1}}
\end{equation}
where $\lambda \geq 0$ is a regularization parameter.

Our aim is to establish precise \emph{non-asymptotic} bounds\footnote{All of our results, including the conditions on $T$ and $m$ are non-asymptotic.} on the mean squared error (MSE) which hold with high probability. In particular, we would consequently like to ensure that the MSE goes to zero as $m$ increases, i.e.,
\begin{align} \label{eq:intro_weak_consis}
 \text{(Weak consistency)} \qquad   MSE := \frac{1}{m} {\sum_{l=1}^m}\norm{\est{A}_l - A^*_l}_F^2 \longrightarrow 0 \quad \text{ as } m\rightarrow \infty.
\end{align}
Note that \eqref{eq:intro_weak_consis} is particularly relevant in the ``small-$T$'' regime where $T$ is independent of $m$, and is small. In this case, we cannot hope to reliably estimate the matrices by estimating them via the OLS estimator at each node. More generally, even if $T$ were to grow (mildly) with $m$, it is still meaningful to ask whether the MSE for the estimator \eqref{eq:tv_pen_ls_matform} goes to zero at a faster rate w.r.t $m$ as compared to the naive node-wise OLS estimator.
%
\paragraph{Contributions.} 
At a high level, our main contribution is to show that when the parameters of the linear systems either vary smoothly or exhibit only a small number of changes across edges, then a joint least-squares estimator with a graph-TV penalty can pool information across nodes to learn the parameters far more efficiently than estimating each system in isolation. Concretely, by leveraging the graph structure, our method attains tighter non‑asymptotic error guarantees (and in well‑connected graphs, consistency as the number of systems grows) even when each trajectory is short, thereby outperforming node‑wise OLS in these regimes.
Our main result is to provide  non-asymptotic bounds on the MSE for the estimator \eqref{eq:tv_pen_ls_matform}, which hold with high probability (see Theorem \ref{thm:main_result_tv_pen}). More interpretable versions of Theorem \ref{thm:main_result_tv_pen} are provided as Corollaries \ref{cor:stable-main} and \ref{cor:stable-twoS}, when $\norm{A^*_l}_2 < 1$ holds for each $l$. In this case, transparent dependencies between $T, m$, the smoothness of $(A^*_l)_{l=1}^m$ w.r.t $G$, and the connectivity of $G$ are provided (see conditions \eqref{eq:C1} - \eqref{eq:C3b}). To the best of our knowledge, these are the first theoretical results for graph-based TV-penalized linear regression when the data contains dependencies; see the end of Section \ref{subsec:rel_work} for comparison with related work in this regard. 
\begin{itemize}
    \item In Corollary \ref{cor:smallT-DeltaG}, we address the case where $T$ is a constant, and show that if the graph $G$ is well connected and the matrices $(A^*_l)_{l=1}^m$ either have few jumps or vary smoothly across the edges, then weak consistency is guaranteed as $m$ increases. This is shown for two examples of well connected graphs - the complete graph (Example \ref{ex:complete-DeltaG}) and the Erd\"os-Renyi graph (Example \ref{ex:ER-DeltaG}). 

    \item For certain graphs such as the star graph (Example \ref{ex:star_graph}) and the $2$D grid (Example \ref{ex:2d_grid}), our results require $T$ to grow with $m$. Nevertheless, we show that if $(A^*_l)_{l=1}^m$ are sufficiently smooth w.r.t $G$, and $m$ is suitably large, then $T$ is only required to grow poly-logarithmically with $m, d$ for (weakly-) consistent recovery. In this case, the MSE for \eqref{eq:tv_pen_ls_matform} is $o(1)$, while that for the naive node-wise OLS is $O(1)$ w.r.t $m$ (see Remark \ref{rem:ind_est_comparison}).

    \item  In terms of the proof techniques for Theorem \ref{thm:main_result_tv_pen}, the bulk of the effort lies in establishing the restricted eigenvalue condition for block-diagonal design matrices where we now face difficulties on account of the \emph{dependencies} within each block; see Lemmas \ref{lem:RE_h_1} - \ref{lem:cross-term}. This is achieved via a range--nullspace decomposition tailored to the graph-TV operator. Our main probabilistic tool here is the concentration result of \cite{krahmer14} for controlling the suprema of second order subgaussian chaos processes involving positive semidefinite (p.s.d) matrices. Other parts in the proof of Theorem \ref{thm:main_result_tv_pen} make use of tail bounds for controlling the norm of self-normalized vector-valued martingales, using ideas from the proof of \cite[Theorem 1]{abbasi11}; see Lemmas \ref{lem:l2_bd_term1} and \ref{lem:linf_bd_term2}. Such tail bounds have been deployed recently for learning a single LDS (\cite{Sarkar19}), and also for learning multiple LDS's under a graph-based quadratic variation smoothness assumption (\cite{tyagi2024jointlearn}). However the above ingredients are -- to the best of our knowledge -- new for \emph{graph-based TV-penalized regression with dependent observations}.

    \item Finally, we provide extensive empirical results on synthetic data, and also real datasets related to the U.S Environmental Protection Agency (EPA) national air-quality monitoring network, where we compare the performance of  \eqref{eq:tv_pen_ls_matform} (w.r.t parameter estimation and prediction error) with other baseline methods. For synthetic data, we find that our graph-TV estimator typically outperforms other methods on different graph topologies, especially when the system matrices exhibit a piecewise constant structure over the edges. When the parameters vary smoothly over the edges, the graph-TV estimator is found to be competitive with Laplacian smoothing (\cite{tyagi2024jointlearn}). For the U.S EPA datasets, we find that graph-TV has comparable performance, in terms of prediction error, w.r.t the best competing method.
    
\end{itemize}
%
%
%
\subsection{Related work} \label{subsec:rel_work}
The task of joint estimation of multiple LDSs from their trajectories has received considerable attention over the past few years, mainly due to various applications arising in the modeling of, e.g., flight path dynamics at different altitudes (\cite{Bosworth92}), brain network dynamics (\cite{model_brain20, Gu2014ControllabilityOS}), and gene expressions in genomics (\cite{basu15a}), to name a few.

Recently, \cite{modi_jointlearn24} considered the setting where the system matrices $(A^*_l)_{l=1}^m$ are unknown linear combinations of $k$ unknown basis functions. They propose an estimator for estimating the system matrices along with bounds on the MSE, and show that reliable estimation can be performed provided $T > k$. This is particularly meaningful when $k \ll d^2$.

A recent line of work has also focused on federated learning of LDSs. For instance, \cite{pmlr-v211-wang23d} consider the $l$th system (or client) to be modeled as 
\begin{equation} \label{eq:intro_mult_LDS} 
    x_{l,t+1} = A^*_{l} x_{l,t} + B^*_{l} u_{l,t} +  \eta_{l,t+1}; \quad t=0,\dots,T,
\end{equation}
where $u_{l,t}$ represents the external input to the system. The clients are only allowed to communicate with a central server and $N_l$ independent trajectories are observed for the $l$th client. The goal is to find a common estimate $\est{A}, \est{B}$ to all the system matrices $A^*_l, B^*_l$. It is assumed that 
\begin{equation} \label{eq:intro_fedsys_assump}
    \max_{l,l' \in [m]} \norm{A^*_l - A^*_{l'}}_2 \leq \epsilon, \quad \max_{l,l' \in [m]} \norm{B^*_l - B^*_{l'}}_2 \leq \epsilon.
\end{equation}
Then, the estimates $\est{A}, \est{B}$ are obtained via the OLS method, and error bounds are derived w.r.t the spectral norm, for each client $l \in [m]$. Their main result states that if $N_l$ is suitably large w.r.t $T$ and the system dimensions, and $\epsilon$ is suitably small, then the estimation error at each $l$ is smaller than that obtained by individual client-wise estimation. Clearly, the edge-wise condition \eqref{eq:intro_fedsys_assump} is much stronger than requiring that the TV of the system matrices is suitably small. \cite{Chen2023MultiTaskSI} considered jointly estimating systems of the form \eqref{eq:intro_mult_LDS} under different types of structural assumptions on $A^*_l$'s (e.g., group sparsity; smoothness of the form \eqref{eq:intro_fedsys_assump} w.r.t the Frobenius norm between all pairs of matrices).  Some theoretical results are provided (albeit not for parameter estimation) for a group-lasso type estimator, along with several experiments on synthetic and real data.

A closely related setting to the one above is  where we are given 
the trajectory of the true system, and that of a ``similar'' system. This was studied by 
\cite{Xin2022IdentifyingTD} -- they proposed a weighted least-squares approach for estimating the true system parameters, and show that the estimation error can be reduced by using data from the two systems.

The recent work \cite{tyagi2024jointlearn} considered a similar setting as in the present paper, but under a different smoothness assumption wherein $\sum_{\set{l,l'} \in \calE} \norm{A^*_{l} - A^*_{l'}}_F^2$ is small. For a smoothness penalized estimator (namely Laplacian smoothing), it was shown that weak consistency is obtained when $T$ is at least as large as the condition required for identification of a single LDS, and also grows with $m$ as $\log m$. It was also shown for a subspace constrained estimator that weak consistency is obtained for $T = 2$ provided the smoothness term is sufficiently small. It is by now well known
that TV penalization is able to better preserve sharp edges and boundaries in signals as compared to the aforementioned quadratic  penalization (e.g, \cite[Figure 1]{sadhanala2016total}). Moreover 
the estimators considered in \cite{tyagi2024jointlearn} admit a closed form solution and require a very different analysis than in our setting.

In the special case where $A^*_l = A^*$ for each $l$,  i.e., the perfectly smooth setting, our problem setting reduces to learning a single LDS from $m$ independent trajectories. In this case, there exist many results for estimating the system matrix, with error bounds w.r.t the spectral norm (see e.g., \cite{Xin22MultTraj, zheng2020non}).  

Note that by stacking the vectorized versions of the matrices $A^*_1,\dots,A^*_l$ to form a tall vector $a^* \in \matR^{md^2}$, the model in \eqref{eq:mult_linsys} can be recast as the linear model \eqref{eq:model_vec_form} with a design matrix $Q$. Then, the estimator \eqref{eq:tv_pen_ls_matform} can be equivalently written as a TV-penalized least squares estimator of the form \eqref{eq:tv_pen_ls_vecform}, which is referred to as the generalized lasso in the literature  (\cite{tibshirani2011solution,land1997variable}). Estimators of the form \eqref{eq:tv_pen_ls_vecform} have been theoretically analyzed recently\footnote{We remark that \cite{LiRaskutti20}, \cite{tran2022generalizedelasticnetsquares} consider a more general graph-based penalization involving both the $\ell_1$ and $\ell_2^2$ penalties. This is referred to as the \emph{generalized elastic net} in \cite{tran2022generalizedelasticnetsquares} and the \emph{graph total-variation method} in \cite{LiRaskutti20}.} in \cite{LiRaskutti20, tran2022generalizedelasticnetsquares}. The main difference between our setting and these works is that \cite{LiRaskutti20}, \cite{tran2022generalizedelasticnetsquares} consider the rows of the design matrix to be independent samples from a centered Gaussian (with covariance $\Sigma$) and also assume that the noise is independent of the design matrix. In our case, the design matrix $Q$ is block-diagonal, where the diagonal blocks are independent (as they correspond to different systems) but the entries within a block \emph{are dependent} (as they correspond to data generated by the same system). Secondly, the ``noise'' term in our setting in \eqref{eq:model_vec_form} shares dependencies with $Q$, while the setting in \cite{LiRaskutti20}, \cite{tran2022generalizedelasticnetsquares} assumes that the noise is independent of the design matrix. These two points lead to highly non-trivial challenges in our setting, see discussion after Theorem \ref{thm:main_result_tv_pen}, and also Remark \ref{rem:comp_iid_tv_regression} for further details.

 Notice that \eqref{eq:mult_linsys} can be equivalently written as a single LDS 
    \begin{equation*}  
        \begin{bmatrix}
   x_{1,t+1} \\ 
   x_{2,t+1} \\ 
  \vdots \\
  x_{m,t+1}
 \end{bmatrix} =
\underbrace{\begin{bmatrix}
   A_1^* & 0 & \hdots & 0\\ 
  0 & A_2^* & \hdots & 0\\ 
  \vdots  &  & \ddots & \vdots\\
  0 & \hdots & \hdots & A_m^*
 \end{bmatrix}}_{ =: A^*}
\begin{bmatrix}
   x_{1,t} \\ 
   x_{2,t} \\ 
  \vdots \\
  x_{m,t}
\end{bmatrix}
+ 
\begin{bmatrix}
   \eta_{1,t+1} \\ 
   \eta_{2,t+1} \\ 
  \vdots \\
  \eta_{m,t+1}
 \end{bmatrix}; \ t=0,\dots,T.
\end{equation*}
Hence the recovery problem is equivalent to that of estimating the block-diagonal matrix $A^* \in \matR^{md \times md}$. Here $A^*$ is structured as it is not only block-diagonal (i.e., linear equality constraints), but its diagonal blocks do not change too quickly, or have few jumps over $\calE$. There is by now a substantial literature on estimating a LDS with a structured system matrix from its trajectory. 
\begin{itemize}
    \item These results are typically for penalized estimators, where the penalty consists of a suitable norm $R(\cdot)$. Here $R$ is usually the $\ell_1$ norm for sparse matrices (e.g., \cite{Loh12,Melnyk16,Basu15,Kock2015,Fattahi2019,Wang23}). Other examples include the nuclear norm for low-rank matrices (e.g., \cite{Wang23}), and the (sparse) group LASSO penalty (\cite{Melnyk16}). Penalties imposing a smoothness constraint w.r.t a graph $G$ are comparatively non-existent in the literature, barring the recent work \cite{tyagi2024jointlearn} (discussed earlier).

    \item Alternatively, one could also replace \eqref{eq:tv_pen_ls_matform} with a constrained least-squares estimator where the constraint is of the form $\set{\sum_{\set{l,l'} \in \calE} \norm{A_{l} - A_{l'}}_{1,1} \leq \lambda'}$ which is a convex set. While such a constrained least-squares estimator has not been studied explicitly for structured LDS estimation, recently \cite{tyagi2024learning} studied this for general convex constraints $\calK$ and showed error bounds that depend on the local complexity\footnote{as measured by Talagrand's $\gamma_{1}, \gamma_2$ functionals, see Appendix \ref{appsec:tal_prelim} for definition.} of $\calK$. The analysis therein also involves a restricted eigenvalue condition, which is shown using the concentration result of \cite{krahmer14}. While it seems possible to bound the $\gamma_2$ term using similar ideas as in our analysis, the $\gamma_1$ term is usually more difficult to control. In general, it is unclear whether bounds of similar nature as in the present paper can be obtained from the result in \cite{tyagi2024learning}.
\end{itemize}

Finally, we mention the related problem of signal denoising on networks where we are given $y = \beta^* + \eta$ with $\beta^* \in \matR^m$ unknown, and $\eta$ denoting noise (typically with centered and independent subgaussian entries). Assuming $\beta^*$ has few jumps or varies smoothly across the edges of $G$, a well-studied estimator is 
\begin{align*}
    \est{\beta} = \argmin{\beta \in \matR^m} \set{ \norm{y-\beta}_2^2 + \lambda \sum_{\set{l,l'}\in \calE} \abs{\beta_l - \beta_{l'}}},
\end{align*}
see e.g., \cite{huetter16, wang2016trend, ortelli2021prediction}, for non-asymptotic $\ell_2$ estimation error results. In particular, \cite{huetter16} derives an oracle inequality for the estimation error which is then instantiated for different choices of graphs $G$. Similar to \cite{huetter16}, our results also depend on parameters such as the compatibility factor  and the inverse scaling factor (see Section \ref{subsec:graph-constants}). However our results also depend on new quantities, such as an alternate inverse scaling factor (see section \ref{subsec:graph-constants}) and a dispersion functional capturing the smoothness of the controllability Grammian's of $A^*_l$'s w.r.t $G$ (see \eqref{eq:DeltaG_def}). 

\section{Problem setup and results} \label{prob_setup}
\subsection{Notation} \label{subsec:notation}
For $x \in \matR^n$ and $p \in \mathbb{N}$, $\norm{x}_p$ denotes the usual $\ell_p$ norm of $x$. For $X \in \matR^{n \times m}$, the spectral and Frobenius norms of $X$ are denoted by $\norm{X}_2$ and $\norm{X}_F$ respectively, while $\dotprod{X}{Y} := \Tr(X^\top Y)$ denotes the inner product between $X$ and $Y$. Here $\Tr(\cdot)$ denotes the trace operator. The vector $\vect(X) \in \matR^{nm}$ is formed by stacking the columns of $X$, and $\norm{X}_{p,p} := \norm{\vect(X)}_p$ denotes the entry-wise $\ell_p$ norm of $X$. 
For integers $1 \leq p,q \leq \infty$ we define the norm $\norm{X}_{p \rightarrow q}$ as
\begin{align*}
    \norm{X}_{p \rightarrow q} = \sup_{u \neq 0} \frac{\norm{Xu}_q}{\norm{u}_p}
\end{align*}
with $\norm{X}_{2 \rightarrow 2}$ corresponding to the spectral norm of $X$ (denoted by $\norm{X}_2$ for simplicity). 
%
For $n \times n$ matrices X with eigenvalues $\lambda_i \in \mathbb{C}$, we denote the spectral radius of $X$ by $\rho(X) := \max_{i=1,\dots,n} \abs{\lambda_i}$.

$I_n$ denotes the $n \times n$ identity matrix. The symbol $\otimes$ denotes the Kronecker product between matrices, and $\ones_n \in \matR^n$ denotes the all-ones vector. The canonical basis of $\matR^n$ is denoted by $e_1,e_2,\dots,e_n$. For $x \in \matR^n$ and $\calS \subseteq [n]$, the restriction of $x$ on $\calS$ is denoted by $(x)_{\calS}$, while $\supp(x) \subseteq [n]$ refers to the support of $x$. The unit ($\ell_2$-norm) sphere in $\matR^n$ is denoted by $\mathbb{S}^{n-1}$, while $\mathbb{B}^{n}_p(r)$ denotes the $\ell_p$ ball of radius $r$ in $\matR^{n}$.

For $a,b > 0$, we say $a \lesssim b$ if there exists a constant $C > 0$ such that $a \leq C b$. If $a \lesssim b$ and $a \gtrsim b$, then we write $a \asymp b$.  The values of symbols used for denoting constants (e.g., $c, C, c_1$ etc.) may change from line to line. It will sometimes be convenient to use asymptotic notation using the standard symbols $O(\cdot), \Omega(\cdot), o(\cdot)$ and $\omega(\cdot)$; see for example \cite{algoBook}. 

\subsection{Setup and preliminaries} \label{subsec:setup_prelim}
Let $G = ([m], \calE)$ be a known undirected connected graph. At each node $l \in [m]$, the state $x_{l,t} \in \matR^d$ at time $t$ is described by a linear dynamical system \eqref{eq:mult_linsys}
where $A_l^* \in \matR^{d\times d}$ is unknown. We will assume throughout for convenience that $\eta_{l,t} \stackrel{iid}{\sim} \calN(0,I_d)$ for each $l,t$. All our results continue to hold up to absolute constants, if $\eta_{l,t}$ were independent and centered subgaussian random variables, with their subgaussian norms bounded uniformly by some constant.

Given the data $(x_{l,t})_{l=1, t=0}^{m, T}$ our goal is to estimate $A^*_1,\dots,A^*_m$. Clearly, without additional assumptions on these matrices, we cannot do better then to estimate each $A^*_l$ individually via the OLS estimator. Our interest is in doing better than this naive strategy, and we will see that this is possible when the matrices either vary smoothly, or do not exhibit too many jumps across $\calE$.

The aim is to bound the mean-squared error (MSE) $\frac{1}{m} {\sum_{l=1}^m}\norm{\est{A}_l - A^*_l}_F^2$ for the estimator \eqref{eq:tv_pen_ls_matform}, and establish conditions under which the MSE goes to zero as $m \rightarrow \infty$. Ideally, this should hold under weaker conditions than the aforementioned naive strategy, e.g., when $T$ is small. 
To this end, it will be useful to denote
\begin{align*}
    \Xtil_l &= [x_{l,2} \cdots x_{l,T+1}] \in \matR^{d\times T}, \ X_l = [x_{l,1} \cdots x_{l,T}] \in \matR^{d\times T}\\
    \text{and} \quad E_l &= [\eta_{l,2} \ \eta_{l,3} \ \cdots \ \eta_{l,T+1}] \in \matR^{d\times T}.
\end{align*}
Furthermore, let 
\begin{align}\label{eq:basic_def}
\xtil_l & := \vect(\Xtil_l) \in \matR^{dT} \quad \text{and denote}\nonumber\\
Q &:= \blkdiag(X_l^\top \otimes I_d)_{l=1}^m\quad \text{with}\\
a^*_l &:= \vect(A^*_l)\quad \text{and} \quad \eta_l := \vect(E_l).\nonumber
\end{align}
We form $a^* \in \matR^{md^2}$, $\xtil \in \matR^{mdT}$ and $\eta \in \matR^{mdT}$ by column-stacking $a^*_l$'s, $\xtil_l$'s and $\eta_l$'s respectively. Then the model \eqref{eq:mult_linsys} can be rewritten as
\begin{align}\label{eq:model_vec_form}
   \xtil = Q a^* + \eta. 
\end{align}
Notice that while the diagonal blocks of $Q$ are respectively independent, the entries within each diagonal block are dependent. Moreover, each block of $\eta$, i.e. $\eta_l$, is dependent on the corresponding diagonal block $X_l^\top \otimes I_d$ of $Q$. These dependencies constitute the main challenges in our theoretical analysis.

Denote $D \in \set{-1,1,0}^{\abs{\calE} \times m}$ to be the incidence matrix of any edge-orientation of $G$, and set $\Dtil = D \otimes I_{d^2}$. Then  for any $a \in \matR^{m d^2}$ formed by column-stacking $a_1,\dots, a_m \in \matR^{d^2}$, we have
\begin{align*}
    \Dtil a = 
    \begin{pmatrix}
\vdots \\
a_l - a_{l'} \\
\vdots
\end{pmatrix} \in \matR^{\abs{\calE} d^2}.
\end{align*}
Consequently, \eqref{eq:tv_pen_ls_matform} can be written as 
\begin{align}\label{eq:tv_pen_ls_vecform}
    (\est{a}_1,\dots,\est{a}_m) = \argmin{a\in \matR^{md^2}} \set{\frac{1}{2m} \norm{\xtil - Q a}_2^2 + \lambda \normnew{\Dtil a}_1}.
\end{align}
In order to analyze the performance of \eqref{eq:tv_pen_ls_vecform}, the geometry of the underlying graph will play a central role. In the next section, we introduce a set of key quantities that capture this geometry and govern the statistical and computational behavior of our estimator.
\subsection{Graph–geometry parameters}\label{subsec:graph-constants}
Throughout the sequel we shall repeatedly invoke a small collection of
graph–dependent scalar quantities that capture key geometric properties of the underlying graph \(G = ([m], \calE)\), and influence the difficulty of joint parameter recovery across the network.
%
%
%
\paragraph{Fiedler eigenvalue.} 
Denote by
	\[
	L:=D^{\top}D\in\mathbb{R}^{m\times m},
	\]
	the combinatorial Laplacian and let
	$L^{\dagger}$ be the pseudoinverse of $L$. Denote the eigenvalues of $L$ by $\lambda_1(G) \geq \lambda_2(G) \geq \cdots \geq \lambda_{m-1}(G) >  \lambda_m(G) ( = 0)$. Throughout, $\lambda_{m-1}(G)$ (or simply $\lambda_{m-1}$) stands for the
	\emph{algebraic connectivity}—the smallest positive eigenvalue of $L$ (also known as the Fiedler eigenvalue of $G$). It is well known that $\lambda_{m-1} > 0$ iff $G$ is connected, and more generally, $\lambda_{m-1}$ measures how well-knit the graph is. A small value indicates bottlenecks or sparse regions, making global information propagation harder.
  
\paragraph{Inverse scaling factors.}  The inverse scaling factors of $D$ are defined as follows. 
    \[
	D^{\dagger}
	=\bigl[s_1\;\;s_2\;\;\dots\;\;s_{\vert \calE\vert}\bigr],\qquad
	\mu\;:=\; \max_{j\in[\abs{\calE}]}\|s_j\|_{2}, 
	\]
    and
	\[
	(D^{\dagger})^{\!\top}
	=\bigl[s'_1\;\;s'_2\;\;\dots\;\;s'_m\bigr],\qquad
	\mu' \;:=\; \max_{l\in[m]}\|s'_l\|_{2}.
	\]
These quantify how ``spread out'' the rows and columns of the (pseudo) inverse incidence matrix are -- small values of $\mu,\mu'$ enable better estimates of the model parameters. Interestingly, while $\mu$ appeared in the work \cite{huetter16} for TV based denoising on graphs, we will additionally have the term $\mu'$ appearing in our analysis. It is unclear whether this is an artefact of the analysis, or is intrinsic to the problem. 
While $\mu$ is typically bounded via arguments specialized to $G$ (see \cite{huetter16}), it is also possible to bound both $\mu$ and $\mu'$ using $\lambda_{m-1}$.
\begin{proposition}[Bounds on inverse scaling factors] \label{prp:inversescfac_bd} 
 Let $D$ be the incidence matrix of a connected graph $G$. Then,
	\begin{equation*}
		\mu \leq \dfrac{\sqrt{2}}{\lambda_{m-1}} \wedge \frac{1}{\sqrt{\lambda_{m-1}}}\quad  \text{and} \quad \mu'\le\frac{1}{\sqrt{\lambda_{m-1}}}.
	\end{equation*}
\end{proposition}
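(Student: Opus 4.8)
The plan is to work with the thin singular value decomposition of $D$ and express $\mu$, $\mu'$, and $\lambda_{m-1}$ in terms of the singular values of $D$, then reduce each bound to a statement about how the columns (resp. rows) of $D^\dagger$ interact with the column space of $D^\dagger$.

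\medskip

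\textbf{Setup via the SVD.} First I would write $D = \sum_{k=1}^{m-1} \sigma_k u_k v_k^\top$, the thin SVD, where $\sigma_1 \geq \cdots \geq \sigma_{m-1} > 0$ are the nonzero singular values, $\{v_k\} \subset \matR^m$ are orthonormal (with $v_m := \ones_m/\sqrt{m}$ spanning $\nullsp(D)$), and $\{u_k\} \subset \matR^{|\calE|}$ are orthonormal. Since $L = D^\top D = \sum_k \sigma_k^2 v_k v_k^\top$, we have $\lambda_{m-1}(G) = \sigma_{m-1}^2$, i.e. $\sigma_{m-1} = \sqrt{\lambda_{m-1}}$. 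The pseudoinverse is $D^\dagger = \sum_k \sigma_k^{-1} v_k u_k^\top$, so $\norm{D^\dagger}_2 = \sigma_{m-1}^{-1} = 1/\sqrt{\lambda_{m-1}}$.

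\medskip

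\textbf{Bounding $\mu'$ and the easy half of $\mu$.} For $\mu'$, note that $s'_l$ is the $l$-th column of $(D^\dagger)^\top$, i.e. $s'_l = (D^\dagger)^\top e_l = (D^\top)^\dagger e_l$; hence $\norm{s'_l}_2 = \norm{(D^\dagger)^\top e_l}_2 \leq \norm{(D^\dagger)^\top}_2 \norm{e_l}_2 = \norm{D^\dagger}_2 = 1/\sqrt{\lambda_{m-1}}$, giving $\mu' \le 1/\sqrt{\lambda_{m-1}}$. Identically, $s_j = D^\dagger e_j$ satisfies $\norm{s_j}_2 \le \norm{D^\dagger}_2 = 1/\sqrt{\lambda_{m-1}}$, which yields $\mu \le 1/\sqrt{\lambda_{m-1}}$, the second term in the minimum for $\mu$.

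\medskip

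\textbf{The main obstacle: the $\sqrt{2}/\lambda_{m-1}$ bound on $\mu$.} This is the part that requires a combinatorial argument rather than pure operator-norm manipulation, and I expect it to be the crux. The idea is that $s_j = D^\dagger e_j$ solves $\min\{\norm{z}_2 : D^\top z = e_j,\ z \perp \nullsp(D^\top)\}$, but more usefully $s_j$ lies in the row space of $D$, i.e. $s_j = D w$ for some $w \perp \ones_m$; indeed one checks $w = L^\dagger(e_a - e_b)$ if edge $j$ joins $a$ and $b$ with the chosen orientation, since $D D^\dagger = \Pi$ is the projection onto $\range(D)$ and $D^\top D w = (e_a - e_b)$. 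Thus $\norm{s_j}_2^2 = \norm{D L^\dagger (e_a-e_b)}_2^2 = (e_a - e_b)^\top L^\dagger D^\top D L^\dagger (e_a - e_b) = (e_a-e_b)^\top L^\dagger (e_a-e_b)$, which is precisely the effective resistance $R_{\mathrm{eff}}(a,b)$ between the endpoints of edge $j$. So $\mu^2 = \max_{\{a,b\}\in\calE} R_{\mathrm{eff}}(a,b)$. Now I would bound the effective resistance of an edge: $R_{\mathrm{eff}}(a,b) = (e_a-e_b)^\top L^\dagger (e_a-e_b) \le \norm{L^\dagger}_2 \norm{e_a - e_b}_2^2 = \frac{2}{\lambda_{m-1}}$, giving $\mu \le \sqrt{2/\lambda_{m-1}} = \sqrt{2}/\sqrt{\lambda_{m-1}}$.

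\medskip

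Wait — this gives $\sqrt{2}/\sqrt{\lambda_{m-1}}$, not $\sqrt{2}/\lambda_{m-1}$. I should reconcile this: the two claimed upper bounds $\sqrt2/\lambda_{m-1}$ and $1/\sqrt{\lambda_{m-1}}$ are only both meaningful when $\lambda_{m-1}$ is small vs.\ large respectively, and in fact when $\lambda_{m-1} \le 2$ the bound $1/\sqrt{\lambda_{m-1}} \le \sqrt{2}/\lambda_{m-1}$, and $\lambda_{m-1} \le 2$ always holds for a connected graph unless it is a single edge (indeed $\lambda_{m-1} \le \frac{m}{m-1} \le 2$). So the effective-resistance argument alone already delivers the sharper $1/\sqrt{\lambda_{m-1}}$, and the $\sqrt2/\lambda_{m-1}$ term is the weaker one that holds trivially in the regime where it matters; I would present both by noting $R_{\mathrm{eff}}(a,b) \le \frac{2}{\lambda_{m-1}^2}\lambda_{m-1} \wedge \frac{1}{\lambda_{m-1}}$ carefully using $\norm{L^\dagger}_2 = 1/\lambda_{m-1}$ and $\lambda_{m-1} \le 2$. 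The main work, then, is establishing the effective-resistance identity $\norm{s_j}_2^2 = R_{\mathrm{eff}}(a,b)$ cleanly; the rest is bookkeeping with $\norm{L^\dagger}_2$ and the universal bound $\lambda_{m-1} \le 2$.
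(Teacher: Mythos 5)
Your bounds $\mu'\le 1/\sqrt{\lambda_{m-1}}$ and $\mu\le 1/\sqrt{\lambda_{m-1}}$ via $\norm{D^\dagger}_2=\sigma_{m-1}^{-1}=1/\sqrt{\lambda_{m-1}}$ are correct and coincide with the paper's own argument. The gap is entirely in your treatment of the $\sqrt{2}/\lambda_{m-1}$ bound. The identity $\norm{s_j}_2^2=R_{\mathrm{eff}}(a,b)$ is false: since $D^\dagger=L^\dagger D^\top$, the $j$-th column is $s_j=L^\dagger D^\top e_j=L^\dagger(e_a-e_b)$, hence $\norm{s_j}_2^2=(e_a-e_b)^\top (L^\dagger)^2(e_a-e_b)$, not $(e_a-e_b)^\top L^\dagger(e_a-e_b)$; your intermediate step $s_j=DL^\dagger(e_a-e_b)$ is not even dimensionally consistent ($s_j\in\matR^m$ while $DL^\dagger(e_a-e_b)\in\matR^{\abs{\calE}}$). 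A concrete counterexample is the single-edge graph ($m=2$): there $s_1=(1/2,-1/2)^\top$, so $\norm{s_1}_2^2=1/2$, whereas $R_{\mathrm{eff}}(a,b)=1$.

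The attempted reconciliation is also unsound, because it rests on the claim that $\lambda_{m-1}\le \frac{m}{m-1}\le 2$ for every connected graph. Fiedler's inequality is $\lambda_{m-1}\le \frac{m}{m-1}\min_v\deg(v)$, and for instance the complete graph has $\lambda_{m-1}=m$. The regime $\lambda_{m-1}\gg 2$ is precisely where the $\sqrt{2}/\lambda_{m-1}$ term is the operative one (it gives $\mu\lesssim 1/m$ for the complete graph and $\mu\lesssim 1/(mp)$ for dense Erd\H{o}s--R\'enyi graphs, which the paper's Examples rely on), so it cannot be dismissed as redundant; moreover your route via $R_{\mathrm{eff}}\le 2/\lambda_{m-1}$ would only yield $\mu\le\sqrt{2}/\sqrt{\lambda_{m-1}}$, which is weaker than both claimed terms. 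The repair is immediate once the correct identity is used: $\norm{s_j}_2=\norm{L^\dagger(e_a-e_b)}_2\le\norm{L^\dagger}_2\,\norm{e_a-e_b}_2=\sqrt{2}/\lambda_{m-1}$, which is the content of the cited result of H\"utter--Rigollet that the paper invokes rather than reproves.
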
 
\begin{proof}
    To bound $\mu'$, note that $\mu' \leq \norm{(D^\dagger)^\top}_2 = \norm{D^\dagger}_2$  and also $\norm{D^\dagger}_2 = \frac{1}{\sqrt{\lambda_{m-1}}}$. The same bound is readily seen to apply for $\mu$, while the bound $\mu\leq \frac{\sqrt{2}}{\lambda_{m-1}}$ is taken from \cite[Prop. 14]{huetter16}.
\end{proof}

%
%
\paragraph{Compatibility factors.} These quantities characterize the relationship between sparsity in edge differences (TV) and energy in the parameter vector. Larger \(\kappa\) ensures that sparse total-variation structure leads to well-behaved solutions.
The notion of the so-called  compatibility factor (see \cite{huetter16}) will be crucial. We recall it below.
\begin{definition} \label{def:invfac_compat}
Denoting $\Dtil = D \otimes I_{d^2}$, the compatibility factor of $\Dtil$ for a set $\calT \subseteq \left[d^2\abs{\calE} \right]$ is defined as
\begin{equation*}
    \kappa_{\emptyset} = 1, \quad \kappa_{\calT} := \inf_{\theta \in \matR^{md^2}} \frac{\sqrt{\abs{\calT}} \norm{\theta}_2}{\norm{(\Dtil \theta)_{\calT}}_1} \text{ for } \calT \neq \emptyset. 
\end{equation*}
Moreover, we denote $\kappa := \min_{\calT \subseteq \left[d^2 \abs{\calE}\right]} \kappa_{\calT}$.
\end{definition}
As a small remark, note that the compatibility factor is defined for $\Dtil$ in our setup, as opposed to $D$ in \cite{huetter16}. We refer the reader to \cite{huetter16} where upper bounds on $\invfac$ were derived for different types of graphs. For the compatibility factor $\kappa_{\calT}$, it is not difficult to obtain a lower bound for bounded degree graphs, along the lines of the proof of \cite[Lemma 3]{huetter16}. We just note that given any $\emptyset \neq \calT \subseteq [d^2 \abs{\calE}]$, we can denote the subset of $[\abs{\calE}]$ ``appearing'' in $\calT$ as 
\begin{equation} \label{eq:edge_appear}
\calT_{\calE} := \set{j \in [\abs{\calE}]: \exists i \in \calT \text{ s.t } (j-1)d^2 +1 \leq i \leq jd^2}.
\end{equation}
\begin{proposition}[Bounds on compatibility factor] \label{prop:compatfac_bd}
Let $D \in \set{-1,1,0}^{\abs{\calE} \times m}$ be the incidence matrix of a graph $G = ([m], \calE)$ with maximal degree $\triangle_{\deg}$. For any $\emptyset \neq \calT \subseteq [d^2\abs{\calE}]$, consider $\calT_{\calE} \subseteq [\abs{\calE}]$ as defined in \eqref{eq:edge_appear}. Then,
\begin{align*}
    \kappa_{\calT} \geq \frac{1}{2\min\set{\sqrt{\triangle_{\deg}}, \sqrt{\abs{\calT_{\calE}}}}}.
\end{align*}
\end{proposition}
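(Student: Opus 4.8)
The plan is to show that for every $\theta\in\matR^{md^2}$ one has
$\norm{(\Dtil\theta)_{\calT}}_1\le\sqrt{2\min\set{\triangle_{\deg},\abs{\calT_{\calE}}}}\cdot\sqrt{\abs{\calT}}\,\norm{\theta}_2$, after which the bound on $\kappa_{\calT}$ follows directly from Definition \ref{def:invfac_compat} (together with $\sqrt2\le2$ and $\sqrt{\min\set{a,b}}=\min\set{\sqrt a,\sqrt b}$). This is the analogue, for $\Dtil=D\otimes I_{d^2}$, of the argument behind \cite[Lemma 3]{huetter16}. Throughout I would write $\theta$ in blocks $\theta_1,\dots,\theta_m\in\matR^{d^2}$ and use that the block of $\Dtil\theta$ indexed by an edge $j=\set{l,l'}\in\calE$ is $\pm(\theta_l-\theta_{l'})$, so it has $\ell_2$-norm $\norm{\theta_l-\theta_{l'}}_2$.

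First I would reduce the $\ell_1$-norm to an $\ell_2$-norm over the edges actually touched by $\calT$: by Cauchy--Schwarz $\norm{(\Dtil\theta)_{\calT}}_1\le\sqrt{\abs{\calT}}\,\norm{(\Dtil\theta)_{\calT}}_2$, and since the coordinates of $\calT$ lie only in the edge-blocks indexed by $\calT_{\calE}$ (see \eqref{eq:edge_appear}) and restricting to a coordinate subset cannot increase an $\ell_2$-norm, $\norm{(\Dtil\theta)_{\calT}}_2^2\le\sum_{\set{l,l'}\in\calT_{\calE}}\norm{\theta_l-\theta_{l'}}_2^2$. Next I would split each summand over its two endpoints via $\norm{\theta_l-\theta_{l'}}_2^2\le2\norm{\theta_l}_2^2+2\norm{\theta_{l'}}_2^2$, giving $\sum_{\set{l,l'}\in\calT_{\calE}}\norm{\theta_l-\theta_{l'}}_2^2\le2\sum_{l\in[m]}c_l\,\norm{\theta_l}_2^2$, where $c_l$ is the number of edges of $\calT_{\calE}$ incident to $l$. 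The key bookkeeping step is then $c_l\le\min\set{\triangle_{\deg},\abs{\calT_{\calE}}}$ — it is at most $\deg(l)\le\triangle_{\deg}$, and at most the total number of edges in $\calT_{\calE}$ — whence $\sum_{l}c_l\,\norm{\theta_l}_2^2\le\min\set{\triangle_{\deg},\abs{\calT_{\calE}}}\,\norm{\theta}_2^2$.

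Chaining these inequalities yields the claimed bound on $\norm{(\Dtil\theta)_{\calT}}_1$, so $\sqrt{\abs{\calT}}\,\norm{\theta}_2/\norm{(\Dtil\theta)_{\calT}}_1\ge\bigl(2\min\set{\triangle_{\deg},\abs{\calT_{\calE}}}\bigr)^{-1/2}$ for every $\theta$ with $(\Dtil\theta)_{\calT}\neq0$; the remaining $\theta$ make the ratio $+\infty$ (or the degenerate $0/0$ at $\theta=0$, which plays no role in the infimum), so taking the infimum over $\theta$ completes the proof. I do not anticipate a genuine obstacle here: the computation is elementary, and the only points needing care are the incidence count $c_l\le\min\set{\triangle_{\deg},\abs{\calT_{\calE}}}$ and the treatment of the degenerate $\theta$ in the definition of $\kappa_{\calT}$.
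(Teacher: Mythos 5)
Your proposal is correct and follows essentially the same route as the paper's proof: Cauchy--Schwarz to pass from $\ell_1$ to $\ell_2$ over the coordinates of $\calT$, the elementary bound $\norm{\theta_l-\theta_{l'}}_2^2\le 2\norm{\theta_l}_2^2+2\norm{\theta_{l'}}_2^2$, and a count of how many edges of $\calT_{\calE}$ touch each node (bounded by $\min\{\triangle_{\deg},\abs{\calT_{\calE}}\}$). Your explicit incidence-count $c_l$ even gives the marginally sharper constant $\sqrt{2\min\{\triangle_{\deg},\abs{\calT_{\calE}}\}}$, which of course still implies the stated bound.
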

The proof is outlined in Appendix \ref{appsec:proof_compatfac_bd}. The main difference from \cite[Lemma 3]{huetter16} is the appearance of $\abs{\calT_{\calE}}$ instead of $\abs{\calT}$ in the  bound.
In Table \ref{tb:values_graph_parameters}, we summarize the values of these graph-dependent scalar quantities for several commonly studied graph models. These will be invoked in the next section for instantiating our main result.
\begin{table}[htbp]
\centering
\renewcommand{\arraystretch}{1.3}
\begin{tabular}{|p{2cm}|p{3cm}|p{4cm}|p{4cm}|}
\hline
\textbf{Graph Model} & \(\mu\) (Inverse Scaling) & \(\mu'\) (Alt. Inverse Scaling) & \(\kappa_{\calT}\) (Compatibility Factor) \\
\hline
2D Grid & \(\lesssim \sqrt{\log m}\) (\cite{huetter16}) & \(\lesssim \sqrt{\log m}\) (App. \ref{subsec:invfac_2dgrid}) & \(\gtrsim C\) (Prop. \ref{prop:compatfac_bd}) \\
\hline
Complete Graph & \(\lesssim 1/m\) (Prop. \ref{prp:inversescfac_bd}) & \(\lesssim 1/\sqrt{m}\) (Prop. \ref{prp:inversescfac_bd}) & \(\gtrsim 1/\sqrt{m}\) (Prop. \ref{prop:compatfac_bd}) \\
\hline
Star Graph & \(\le 1\) (Prop. \ref{prp:inversescfac_bd}) & \(\le 1\) (Prop. \ref{prp:inversescfac_bd}) & \(\gtrsim 1/\sqrt{\abs{\calT_{\calE}}}\) (Prop. \ref{prop:compatfac_bd}) \\
\hline
Erdős–Rényi \(G(m, p)\) & \(\lesssim 1/\sqrt{mp}\) (Prop. \ref{prp:inversescfac_bd}) & \(\lesssim 1/\sqrt{mp}\) (Prop. \ref{prp:inversescfac_bd}) & \(\gtrsim 1/\sqrt{mp}\) (Prop. \ref{prop:compatfac_bd}) \\
\hline
\end{tabular}
\caption{Summary of bounds for inverse scaling factors \(\mu, \mu'\) and compatibility factor \(\kappa_\calT\) for some graph models.}  \label{tb:values_graph_parameters}
\end{table}

%
\subsection{Main results} \label{subsec:main_result}
Our results will depend on the so-called controllability Grammian of each system $l\in[m]$, 
\begin{equation*} 
    \Gamma_t(A^*_l) := \sum_{k=0}^{t} (A^*_l)^k ((A^*_l)^k)^\top \in\mathbb{R}^{d\times d}.
\end{equation*}
We will also require that the spectral norms of the following matrices (for each $l \in [m]$) are bounded. 
\begin{align}\label{eq:Atil_l_def}
    \Atil^*_l := \begin{bmatrix}
   I_d & 0 & \hdots & 0\\ 
   A^*_l & I_d & \hdots & 0\\ 
  \vdots  &  & \ddots & \vdots\\
  (A^*_l)^{T-1} & \hdots & A^*_l & I_d
 \end{bmatrix}, \quad \beta:= \max_{l} \norm{\Atil^*_l}_2.
\end{align}
Finally, denoting $$G_l=\sum_{t=1}^{T}\Gamma_{t-1}(A^{*}_l) \ \text{ and } \   
\overline{G}=\frac{1}{m}\sum_{l=1}^m G_l, $$ our results will depend on the quantity 
\begin{equation}\label{eq:DeltaG_def}
    \Delta_G \;:=\; \max_{a,b\in[d]}
    \Biggl(\sum_{l=1}^m\Bigl[(G_l)_{b,a}-(\overline{G})_{b,a}\Bigr]^2\Biggr)^{1/2}.
\end{equation}
{Conceptually, $\Delta_G$ denotes the maximum (over $a,b \in [d]$) standard deviation of the entries $((G_l)_{b,a})_{l=1}^m$.}
The following theorem is our main reult in full generality, its proof is detailed in Section \ref{sec:proof_main_theorem}.
%
%
%
\begin{theorem} \label{thm:main_result_tv_pen}
There exist constants $c < 1/6$, $c' > 0$, and $c_1 \in (0,1)$ such that the following is true. Let $\delta \in (0,c_1)$, $v \geq 1$, \rev{$\scale > 0$} and consider $\calS \subseteq [\abs{\calE} d^2]$. For $F_1, F_2, F_3$ as defined in Lemmas \ref{lem:l2_bd_term1} - \ref{lem:RE_h_1} respectively \rev{(with $F_3$ depending on $\scale$)}, and $\Delta_G$ as in \eqref{eq:DeltaG_def}, suppose $\lambda \geq \frac{2}{m} \max\set{F_1,F_2}$, and the following conditions are satisfied.
 \begin{enumerate}
    \item\label{item_cond1_manithm} $F_3\sqrt{v} \leq c T$,

    \item\label{item_cond2_manithm} $\frac{\beta^2}{m} (\sqrt{mT} + d)(d + v) \leq c T$, and 

    \item\label{item_cond3_manithm} $\frac{\mu}{\sqrt{m}}\left(1+\frac{\sqrt{|S|}}{\kappa_S}+ \rev{\frac{1}{\scale}}\bigl\|(\Dtil a^{*})_{S^c}\bigr\|_1\right)
\left[
d\,\Delta_G
+
\beta^2\,\rev{d}\sqrt{T} \log\!\Bigl(\frac{|\mathcal E|\,d}{\delta}\Bigr)
\right]\leq c T$.
\end{enumerate}
Then w.p at least $1 - 4\exp(-c'\sqrt{v}) - \delta$, 
\begin{equation*}
    \norm{\est{a} - a^*}_2 \leq \frac{2m \lambda}{T} \Big(1 + 3\frac{\sqrt{\abs{\calS}}}{\kappa_{\calS}}\Big) + \sqrt{8 \frac{\lambda m}{T} \norm{\Dtilacomp}_1} \rev{ + \scale}.
\end{equation*}
\end{theorem}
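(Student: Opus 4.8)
The plan is to follow the classical "basic inequality" route for penalized least-squares estimators, adapted to the graph-TV penalty and the dependent design. Since $\est{a}$ is the minimizer of the objective in \eqref{eq:tv_pen_ls_vecform}, comparing its value at $\est{a}$ with the value at $a^*$ and using $\xtil = Qa^* + \eta$ yields, after rearranging, a bound of the form
\begin{equation*}
\frac{1}{2m}\norm{Q(\est{a}-a^*)}_2^2 \leq \frac{1}{m}\dotprod{\eta}{Q(\est{a}-a^*)} + \lambda\big(\norm{\Dtil a^*}_1 - \norm{\Dtil \est{a}}_1\big).
\end{equation*}
First I would write $h := \est{a} - a^*$ and decompose $\Dtil h$ over the set $\calS$ and its complement; using $\norm{\Dtil a^*}_1 - \norm{\Dtil \est{a}}_1 \leq \norm{(\Dtil h)_{\calS}}_1 - \norm{(\Dtil h)_{\calS^c}}_1 + 2\norm{(\Dtil a^*)_{\calS^c}}_1$ (triangle inequality, splitting on $\calS$), the penalty contribution is controlled. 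The inner-product ("empirical process") term $\frac{1}{m}\dotprod{\eta}{Qh}$ must be bounded by something like $\frac{1}{2m}(\text{deterministic factor})\cdot\norm{\Dtil h}_1$ (plus lower-order pieces); this is exactly where Lemmas \ref{lem:l2_bd_term1}–\ref{lem:linf_bd_term2} (defining $F_1, F_2$) and the choice $\lambda \geq \frac{2}{m}\max\{F_1,F_2\}$ enter — they give, on the high-probability event, that $\frac{1}{m}|\dotprod{\eta}{Qh}| \leq \frac{\lambda}{2}\big(\norm{(\Dtil h)_{\calS}}_1 + \norm{(\Dtil h)_{\calS^c}}_1\big)$ or a close variant, after possibly splitting the noise term into an $\ell_2$-type and an $\ell_\infty$-type piece aligned with the range and nullspace of $\Dtil$.

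Next I would combine these: the $\norm{(\Dtil h)_{\calS^c}}_1$ terms coming from the penalty and from the noise term are arranged so that their difference is nonnegative (or bounded below by $-\frac{1}{2}\lambda\norm{(\Dtil h)_{\calS^c}}_1$), which both (i) puts $h$ into a cone-type constraint $\norm{(\Dtil h)_{\calS^c}}_1 \lesssim \norm{(\Dtil h)_{\calS}}_1 + \norm{(\Dtil a^*)_{\calS^c}}_1$, enabling use of the compatibility factor, and (ii) leaves on the right-hand side only $\lambda\,\norm{(\Dtil h)_{\calS}}_1 + \lambda\norm{(\Dtil a^*)_{\calS^c}}_1$ (up to constants). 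By Definition \ref{def:invfac_compat}, $\norm{(\Dtil h)_{\calS}}_1 \leq \frac{\sqrt{\abs{\calS}}}{\kappa_{\calS}}\norm{h}_2$. Meanwhile the left-hand side $\frac{1}{2m}\norm{Qh}_2^2$ must be lower bounded by a restricted-eigenvalue quantity $\gtrsim \frac{T}{m}\norm{h}_2^2$; this is supplied by Lemma \ref{lem:RE_h_1} (defining $F_3$) together with conditions \ref{item_cond1_manithm}–\ref{item_cond3_manithm}, which are precisely the hypotheses ensuring the RE lower bound holds with the stated constant and on the stated event. Putting this together gives an inequality of the shape $\frac{T}{m}\norm{h}_2^2 \lesssim \lambda\frac{\sqrt{\abs{\calS}}}{\kappa_{\calS}}\norm{h}_2 + \lambda\norm{(\Dtil a^*)_{\calS^c}}_1$, which is quadratic in $\norm{h}_2$; solving it (using $x^2 \leq bx + c \Rightarrow x \leq b + \sqrt{c}$ up to constants) produces exactly the claimed bound $\norm{\est{a}-a^*}_2 \leq \frac{2m\lambda}{T}\big(1 + 3\frac{\sqrt{\abs{\calS}}}{\kappa_{\calS}}\big) + \sqrt{8\frac{\lambda m}{T}\norm{\Dtilacomp}_1}$, after tracking the numerical constants carefully.

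The main obstacle is the restricted-eigenvalue lower bound on $\frac{1}{m}\norm{Qh}_2^2$: because $Q$ is block-diagonal with blocks $X_l^\top \otimes I_d$ whose entries are dependent (generated by the same trajectory), and because the relevant noise term shares randomness with $Q$, one cannot invoke standard i.i.d. RE results. This is handled in Lemmas \ref{lem:RE_h_1}–\ref{lem:cross-term} via a range–nullspace decomposition of $h$ with respect to $\Dtil$ and the chaos-process concentration of \cite{krahmer14}, and conditions \ref{item_cond1_manithm}–\ref{item_cond3_manithm} are exactly what make those lemmas applicable — so in the proof of the theorem itself this step is "merely" a matter of invoking them correctly and checking that the cone membership of $h$ (established above) matches the cone in which the RE bound is proved. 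A secondary bookkeeping subtlety is making sure the noise term is split so that the $\ell_1$ coefficient it contributes is at most $\lambda/2$ on both $\calS$ and $\calS^c$ simultaneously (hence the $\max\{F_1,F_2\}$), and that the union bound over the several high-probability events yields the stated failure probability $4\exp(-c'\sqrt{v}) + \delta$.
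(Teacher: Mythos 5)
Your plan follows essentially the same route as the paper's proof: the basic inequality, splitting the empirical-process term along the range and nullspace of $\Dtil$ so that $F_1$ (an $\ell_2$ bound) and $F_2$ (an $\ell_\infty$ bound) with $\lambda \ge \frac{2}{m}\max\{F_1,F_2\}$ control it, deriving a cone condition and invoking the compatibility factor, establishing the RE lower bound via the range--nullspace decomposition and Lemmas \ref{lem:RE_h_1}--\ref{lem:cross-term} under conditions \ref{item_cond1_manithm}--\ref{item_cond3_manithm}, and finally solving the resulting quadratic inequality. The only small imprecision is that, because of the nullspace component, the noise term contributes an extra $\lambda\norm{h}_2$ piece, so the cone is $\norm{(\Dtil h)_{\calS^c}}_1 \le \norm{h}_2 + 3\norm{(\Dtil h)_{\calS}}_1 + 4\norm{(\Dtil a^*)_{\calS^c}}_1$ rather than one in $\norm{\Dtil h}_1$ alone -- this is precisely the source of the leading ``$1$'' in the final bound, which your write-up already anticipates.
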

The statement of Theorem \ref{thm:main_result_tv_pen} 
captures the dependence on $A^*_l$ via the terms $\beta$, $\Tr(\Gamma_t(A^*_l))$ and $\Delta_G$, as can be seen from the definitions  of $F_1, F_2$ and $F_3$. Conditions \ref{item_cond1_manithm} - \ref{item_cond3_manithm} arise in the course of ensuring that the matrix $Q$ in \eqref{eq:tv_pen_ls_vecform} satisfies the Restricted Eigenvalue (RE) condition; see Lemma's \ref{lem:RE_h_1}, \ref{lem:RE_h_2} and \ref{lem:cross-term}. The main tool used here is the concentration bound of \cite{krahmer14} for controlling the suprema of second order subgaussian chaos processes involving positive semidefinite (p.s.d) matrices. The terms $F_1$ and $F_2$ arise on account of upper bounding the quantities on the RHS of \eqref{eq:proof_temp2}, see Lemmas \ref{lem:l2_bd_term1} and \ref{lem:linf_bd_term2}. Here, the dependencies inherent in our observations lead us to use tail bounds for the norm of self-normalized vector-valued martingales using ideas from the proof of \cite[Theorem 1]{abbasi11}.
\begin{remark}\label{rem:comp_iid_tv_regression}
As noted in Section \ref{subsec:rel_work}, existing theoretical results for TV-penalized regression problems (e.g. \cite{LiRaskutti20, tran2022generalizedelasticnetsquares}) assume that the rows of the design matrix are i.i.d centered Gaussian's (with a covariance $\Sigma$), and establish the RE condition by adapting the technique of \cite{raskutti_RE2010}. In our setting, due to the nature of the block-diagonal design matrix $Q$ containing dependent rows within each block, we find it convenient to establish the RE condition by a different approach. By first showing that the error term $\est{a} - a^*$ lies in \rev{$\calCps$} with \rev{$\calCps$} defined in \eqref{eq:tran_cone_set}, we decompose any \rev{$h \in \calCps$} as $h_1 + h_2$ where $h_1$ lies in the range space of $\Dtil$ while $h_2$ lies in the null space of $\Dtil$. This allows us to ensure the RE condition by using the inequality \eqref{eq:re_ineq_prelim}, and suitably controlling each of the three quantities on the RHS therein (via Lemmas \ref{lem:RE_h_1}, \ref{lem:RE_h_2} and \ref{lem:cross-term}). \rev{Since $\calCps$ is not a cone, the RE condition is actually shown for $\calCps \cap \set{\norm{h}_2 \geq \scale}$ where $\scale > 0$ is a scale parameter (appearing in Theorem \ref{thm:main_result_tv_pen}).}

Furthermore, the results in \cite{LiRaskutti20, tran2022generalizedelasticnetsquares} also assume that the noise is independent of the design matrix, while in our case, the noise term $\eta$ in \eqref{eq:model_vec_form} shares dependencies with the design $Q$ at a block level. This leads us to use a different approach wherein we bound the quantities on the RHS of \eqref{eq:proof_temp2} using tail bounds for self-normalized vector valued martingales (see Lemmas \ref{lem:l2_bd_term1} and \ref{lem:linf_bd_term2}). 
\end{remark}

In order to interpret Theorem \ref{thm:main_result_tv_pen}, we now consider for convenience the particular case  where there exists a uniform constant \(\rho_{\max}<1\) such that
	\begin{align} \label{eq:specnorm_less_1}
	    \lVert A^{\!*}_{l}\rVert_{2}\;\le\;\rho_{\max},
	\qquad l=1,\dots,m.
	\end{align}  
This condition ensures that each system matrix is Schur stable, i.e., $\rho(A^*_l) < \rho_{\max} < 1$ for each $l$. Under this condition, we can bound $\beta$, as well as $F_1,F_2, F_3$, thus leading to useful corollaries of Theorem \ref{thm:main_result_tv_pen}.
%
%
To this end, set
\begin{align*}
\Delta &:=\; (1-\rho_{\max})^2,\qquad
L_1 \;:=\; \log\!\Big(\frac{d T}{\delta\,\Delta}\Big),\qquad
L_2 \;:=\; \log\!\Big(\frac{d|\mathcal E|}{\delta}\Big), \\
\text{ and } \quad  \Phi_{\calS} &=\; 1 \;+\; \frac{\sqrt{|S|}}{\kappa_{\calS}}
\;+\; \rev{\frac{1}{\scale}}\big\|(\Dtil a^*)_{\calS^c}\big\|_1,
\end{align*}
with $\kappa_{\calS}$ the compatibility factor from Def.~\ref{def:invfac_compat} and $S\subseteq [|\mathcal E|d^2]$ arbitrary. 
Under $\rho_{\max}<1$,  we have  $$\beta\le (1-\rho_{\max})^{-1}= \Delta^{-1/2} \ \text{ and } \ 
{\rm Tr}\,(\Gamma_t(A_l^*)) \leq d/\Delta$$
which then allows us to suitably bound the terms $F_1, F_2$ and $F_3$; see Appendix \ref{subsec:proof_maincorr_stab} for details. 
%
%
%
%
\paragraph{Choice of \(\lambda\).}
Now Theorem~\ref{thm:main_result_tv_pen} requires $\lambda$ to satisfy
\[
\lambda \;\ge\; \frac{2}{m}\max\{F_1,F_2\},
\quad\text{with $F_1,F_2$ from Lemmas~\ref{lem:l2_bd_term1} - \ref{lem:linf_bd_term2}.}
\]
In the stable case, using the aforementioned bounds on $F_1, F_2$, this yields the following valid choice for $\lambda$ (for a suitably large constant $c_1 > 0$)
 \begin{align} \label{eq:lambda_corr_stab}
 \lambda \;=\; \frac{c_1}{m}\sqrt{\frac{T}{\Delta}}\;\max\!\Big\{\,d^{3/2}L_1\,,\;\mu L_2\,\Big\}.
    \end{align}

\medskip
\noindent\textbf{Sample–size conditions.}
%
%
The above considerations allow us to obtain a simplified set of sufficient conditions (for a suitably large constant $C > 0$) which ensure Conditions \ref{item_cond1_manithm}-\ref{item_cond3_manithm} in Theorem \ref{thm:main_result_tv_pen}.
\begin{align}
\text{(C1)}\quad 
& T \;\geq\; C \frac{v}{\Delta^2}\Big(1+\mu'\Phi_{\calS}\sqrt{L_2}\Big)^{\!2} && \text{(from Condition \ref{item_cond1_manithm} in Theorem 1)}, \label{eq:C1}\\
\text{(C2)}\quad 
& T \;\geq\; C \frac{(d+v)^2}{m\,\Delta^2} && \text{(from Condition \ref{item_cond2_manithm} in Theorem 1)}, \label{eq:C2}\\
\text{(C3a)}\quad 
& T \;\geq \; C \frac{\mu^2 \,\Phi_{\calS}^2\,L_2^2 \rev{d^2}}{m\Delta^2} && \text{(from Condition \ref{item_cond3_manithm} in Theorem 1)}, \label{eq:C3a}\\
\text{(C3b)}\quad 
& T \;\geq \; C \frac{\mu}{\sqrt{m}} \,\Phi_{\calS}\,d\,\Delta_G 
&& \text{(from Condition \ref{item_cond3_manithm} in Theorem 1)}. \label{eq:C3b}
\end{align}
%
%
%
%
\begin{remark}[Interpreting and controlling $\Delta_G$]\label{rem:DG}
Note that the dispersion functional $\Delta_G$ defined in \eqref{eq:DeltaG_def}
can be bounded as
\[
  \Delta_G \le \bigg(\,\sum_{l=1}^m \|G_l - \overline{G}\|_F^2\,\bigg)^{1/2},
\]
so any mechanism that suppresses edgewise variations of $G_l$ (e.g., graph smoothness) controls $\Delta_G$. 
Under Schur stability $\|A_l^\ast\|_2\le \rho_{\max}<1$, we also have the crude bounds
\(\Gamma_{t-1}(A_l^\ast)\preceq \Delta^{-1}I_d\) with \(\Delta:=(1-\rho_{\max})^2\), hence
\(\|G_l\|_2\le T/\Delta\) and \(\|G_l\|_F\le \sqrt{d}\,T/\Delta\). Therefore, in the worst case (no cross‑node cancellations) one gets
\[
  \Delta_G \;\lesssim\; \frac{T}{\Delta}\,\sqrt{md},
\]
while whenever the $G_l$’s vary smoothly across edges the  control of $\Delta_G$ can be much tighter. In particular, in Appendix \ref{appendix:control_Delta_gr} we show that
 \begin{equation}\label{eq:interpret_DelG_Frob}
  \Delta_G \;\le\; \frac{L_T(\rho_{\max})}{\sqrt{\lambda_{m-1}(G)}}
\Bigg(\sum_{\{l,l'\}\in E}\|A_l^\ast-A_{l'}^\ast\|_F^2\Bigg)^{1/2}  \quad \text{(see Lemma \ref{lem:deltaG-edge-Frob})}
\end{equation}
where $L_T(\rho_{\max}) \le \frac{2\rho_{\max}\,T}{(1-\rho_{\max}^2)^2}$. Hence $\Delta_G$ is small when the system matrices $(A^\star_l)_{l=1}^m$ vary little across the edges of $G$. A small $\Delta_G$ relaxes the condition~\eqref{eq:C3b}, thus weakening the requirement on $T$. Finally, observe that the numerator of \eqref{eq:interpret_DelG_Frob} can be upper bounded in terms of the total-variation $\sum_{\{l,l'\}\in E}\|A_l^\ast-A_{l'}^\ast\|_{1,1}$ in a straightforward manner.
\end{remark}
We now have the following useful corollary of Theorem \ref{thm:main_result_tv_pen} when each $A^*_l$ is stable. Its proof is outlined in Appendix \ref{subsec:proof_maincorr_stab}.
\begin{corollary}[Stable $A_l^*$]
\label{cor:stable-main}
Assume $\|A^*_l\|_2\le \rho_{\max}<1$ for all $l$, choose $\lambda$ as in \eqref{eq:lambda_corr_stab}, and suppose the sample–size side conditions \eqref{eq:C1}–\eqref{eq:C3b} hold for \rev{$\scale = T^{-1/2}$ and for} some $\delta\in(0,c_2)$, $v\ge 1$ for a suitably small constant $c_2 < 1$. Then there exists a constant $c' > 0$ such that, w.p at least $1-4\exp(-c'\sqrt{v})-\delta$,
\begin{equation}
\label{eq:err-main}
\|\widehat a-a^*\|_2
\;\le\; \frac{2m \lambda}{T} \Big(1 + 3\frac{\sqrt{\abs{\calS}}}{\kappa_{\calS}}\Big) + \sqrt{8 \frac{m\,\lambda}{T} \norm{\Dtilacomp}_1}  \rev{+ \frac{1}{\sqrt{T}}}.
\end{equation}
\end{corollary}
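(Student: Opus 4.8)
The plan is to obtain Corollary~\ref{cor:stable-main} as a direct specialization of Theorem~\ref{thm:main_result_tv_pen}. Since the conclusion \eqref{eq:err-main} is verbatim that of the theorem, with the same failure probability, it suffices to verify three points under the hypothesis $\|A^*_l\|_2 \le \rho_{\max} < 1$: (a) the system-dependent quantities $\beta$, $F_1$, $F_2$, $F_3$ admit explicit upper bounds in terms of $T, d, m, \Delta, \mu, \mu', \Phi_{\calS}, L_1, L_2$; (b) the prescribed $\lambda$ in \eqref{eq:lambda_corr_stab} meets the lower bound $\lambda \ge \tfrac{2}{m}\max\{F_1,F_2\}$ demanded by the theorem; and (c) the side conditions \eqref{eq:C1}--\eqref{eq:C3b} imply Conditions~\ref{item_cond1_manithm}--\ref{item_cond3_manithm}. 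Granting (a)--(c), one simply invokes Theorem~\ref{thm:main_result_tv_pen} with the given $\calS, v, \delta$ and the corollary follows.

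For (a): from $\|A^*_l\|_2 \le \rho_{\max}$ we get $\|(A^*_l)^k\|_2 \le \rho_{\max}^k$, hence $\|\Atil^*_l\|_2 \le \sum_{k=0}^{T-1}\rho_{\max}^k \le (1-\rho_{\max})^{-1} = \Delta^{-1/2}$, so $\beta \le \Delta^{-1/2}$; likewise $\Gamma_t(A^*_l) \preceq (1-\rho_{\max}^2)^{-1} I_d \preceq \Delta^{-1} I_d$, giving $\Tr(\Gamma_t(A^*_l)) \le d/\Delta$ and $\|G_l\|_2 \le T/\Delta$. Feeding these into the definitions of $F_1, F_2, F_3$ from Lemmas~\ref{lem:l2_bd_term1}--\ref{lem:RE_h_1} turns each $F_i$ into a closed-form expression; keeping only dominant terms one gets $F_1 \lesssim \sqrt{T/\Delta}\, d^{3/2} L_1$, $F_2 \lesssim \sqrt{T/\Delta}\, \mu L_2$, and $F_3 \lesssim \sqrt T\,\Delta^{-1}\big(1+\mu'\Phi_{\calS}\sqrt{L_2}\big)$. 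The first two bounds make the choice \eqref{eq:lambda_corr_stab} dominate $\tfrac{2}{m}\max\{F_1,F_2\}$ once the constant $c_1$ there is taken large enough, which settles (b).

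For (c), one substitutes the bounds of (a) into Conditions~\ref{item_cond1_manithm}--\ref{item_cond3_manithm} and rearranges. Condition~\ref{item_cond1_manithm} ($F_3\sqrt v \le cT$) becomes, after plugging in the bound on $F_3$, dividing by $\sqrt T$, and squaring, the requirement \eqref{eq:C1}; Condition~\ref{item_cond2_manithm} ($\tfrac{\beta^2}{m}(\sqrt{mT}+d)(d+v)\le cT$) is implied by \eqref{eq:C2} after using $\beta^2\le\Delta^{-1}$, splitting the product, and bounding $d\le d+v$; and Condition~\ref{item_cond3_manithm}, with $\beta^2\le\Delta^{-1}$, separates into the $\Delta_G$-term --- which yields \eqref{eq:C3b} directly --- and the $\beta^2\sqrt T\log(|\calE|d/\delta)$-term, which after dividing by $\sqrt T$ and squaring yields \eqref{eq:C3a}. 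One also checks that the $\Phi_{\calS}$ appearing in \eqref{eq:C1}--\eqref{eq:C3b} is exactly $1+\sqrt{|S|}/\kappa_{\calS}+\norm{\Dtilacomp}_1$ from the theorem, and that $c_2$ is taken small enough relative to the theorem's $c_1$ so that $\delta\in(0,c_2)\subseteq(0,c_1)$. The bookkeeping in (c) --- tracking which lower-order terms (the $\mu$ versus $\mu'$ appearances, the $\sqrt{mT}$ versus $d$ split in Condition~\ref{item_cond2_manithm}, the roles of $L_1$ and $L_2$) get absorbed into the constant $C$ --- is the place where an error is most likely to slip in; there is no single hard idea. A subtlety worth noting is that the $F_i$-bounds of Lemmas~\ref{lem:l2_bd_term1}--\ref{lem:RE_h_1} and the theorem's conclusion all hold on one common high-probability event of mass $\ge 1-4\exp(-c'\sqrt v)-\delta$, so the corollary inherits precisely this event and failure probability without any extra union bound.
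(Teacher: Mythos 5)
Your proposal is correct and follows essentially the same route as the paper: under Schur stability one bounds $\beta\le\Delta^{-1/2}$, $\Tr(\Gamma_t(A^*_l))\lesssim d/\Delta$, hence $F_1,F_2$ (so that \eqref{eq:lambda_corr_stab} dominates $\tfrac{2}{m}\max\{F_1,F_2\}$) and $F_3$, and then checks that \eqref{eq:C1}–\eqref{eq:C3b} imply Conditions \ref{item_cond1_manithm}–\ref{item_cond3_manithm} of Theorem \ref{thm:main_result_tv_pen}, which is exactly how the paper derives the corollary (its appendix carries out the $\beta,F_1,F_2$ bounds and leaves the remaining bookkeeping, which you spell out, as "straightforward"). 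The only minor caveat is that your simplified bound $F_3\lesssim\sqrt{T}\,\Delta^{-1}(1+\mu'\Phi_{\calS}\sqrt{L_2})$ drops the $(\mu')^2\Phi_{\calS}^2L_2$ term and thus implicitly uses $\mu'\Phi_{\calS}\sqrt{L_2}\lesssim\sqrt{T}$, which is itself guaranteed by \eqref{eq:C1}, so the verification goes through.
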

Two natural choices for $S$ are the empty set $S=\emptyset$ in the ``smooth regime'', where the signal  is smooth over the network, and $S=\supp(\Dtil a^{*}) $ in the ``few-changes regime'', wherein the signal is expected to have a small number of changes over the network. Next we specialize our results to these two cases.  
Define the geometry–log multiplier
\[
\mathfrak{M}\ :=\ \Delta^{-1/2}\,\max\!\big\{\, d^{3/2}L_1,\ \mu L_2\,\big\}.
\]
\rev{Since $\mathfrak{M} > 1$, the term $T^{-1/2}$ in \eqref{eq:err-main} is  subsumed by the first term on the RHS therein.}
\begin{corollary}[Two canonical choices of $S$]
\label{cor:stable-twoS}
Under the assumptions of Corollary~\ref{cor:stable-main} and with $\lambda$ as in \eqref{eq:lambda_corr_stab}, there exist absolute $C,c'>0$ such that, with probability at least $1-4e^{-c'\sqrt{v}}-\delta$,
\begin{equation}
\label{eq:twoS-bound}
\frac{1}{\sqrt{m}}\;\|\widehat a-a^*\|_2
\;\le\;
C\left(
\frac{\mathfrak{M}}{\sqrt{mT}}\Big(1+\tfrac{\sqrt{|\calS|}}{\kappa_{\calS}}\Big)
\;+\;
\sqrt{\frac{\mathfrak{M}}{m\sqrt{T}}\;\big\|(\Dtil a^*)_{\calS^c}\big\|_1}
\right).
\end{equation}
 The sample‑size side conditions are \eqref{eq:C1}–\eqref{eq:C3b}.  In particular, the following holds.
\begin{itemize}
\item {\bf \emph{Smooth regime} ($\calS=\emptyset$, $\kappa_\emptyset=1$)}: 
\[
\frac{1}{\sqrt{m}}\|\widehat a-a^*\|_2
\;\lesssim\;
\frac{\mathfrak{M}}{\sqrt{mT}}
\;+\;
\sqrt{\frac{\mathfrak{M}}{m\sqrt{T}}\;\|\Dtil a^*\|_1}.
\]
Here $\Phi_\calS= \rev{\sqrt{T}} \|\Dtil a^*\|_1+1$.

\item {\bf \emph{Few‑changes regime} ($\calS=\operatorname{supp}(\Dtil a^*)$, $s=\|\Dtil a^*\|_0$)}: 
\[
\frac{1}{\sqrt{m}}\|\hat a-a^*\|_2
\;\lesssim\;
\frac{\mathfrak{M}}{\sqrt{mT}}\Big(1+\frac{\sqrt{s}}{\kappa_S}\Big),
\qquad
\Phi_\calS = 1+\frac{\sqrt{s}}{\kappa_{\calS}}\;.
\]
\end{itemize}
\end{corollary}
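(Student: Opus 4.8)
The plan is to derive Corollary~\ref{cor:stable-twoS} as a direct specialization of Corollary~\ref{cor:stable-main}, plugging in the explicit value of $\lambda$ from \eqref{eq:lambda_corr_stab} and then instantiating $S$ in the two canonical ways. First I would start from \eqref{eq:err-main}, divide both sides by $\sqrt m$, and substitute $\lambda = \frac{c_1}{m}\sqrt{T/\Delta}\,\max\{d^{3/2}L_1,\mu L_2\} = \frac{c_1}{m}\sqrt{T}\,\mathfrak M$ (using $\mathfrak M = \Delta^{-1/2}\max\{d^{3/2}L_1,\mu L_2\}$). This makes the prefactor of the first term collapse: $\frac{2m\lambda}{T\sqrt m} = \frac{2c_1}{\sqrt m}\cdot\frac{\sqrt T}{T}\,\mathfrak M = \frac{2c_1\,\mathfrak M}{\sqrt{mT}}$, and the second term becomes $\frac{1}{\sqrt m}\sqrt{8\frac{m\lambda}{T}\|(\Dtil a^*)_{S^c}\|_1} = \sqrt{\frac{8c_1}{m}\cdot\frac{\sqrt T}{T}\,\mathfrak M\,\|(\Dtil a^*)_{S^c}\|_1} = \sqrt{\frac{8c_1\,\mathfrak M}{m\sqrt T}\,\|(\Dtil a^*)_{S^c}\|_1}$. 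Absorbing $c_1$ and the factor $3$ on $\sqrt{|S|}/\kappa_S$ into an absolute constant $C$ gives exactly \eqref{eq:twoS-bound}. The side conditions \eqref{eq:C1}–\eqref{eq:C3b} are inherited verbatim from Corollary~\ref{cor:stable-main}, and the probability statement $1-4e^{-c'\sqrt v}-\delta$ carries over unchanged.

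Next I would treat the two regimes. For the \emph{smooth regime} I set $\calS=\emptyset$; by Definition~\ref{def:invfac_compat} we have $\kappa_\emptyset = 1$, and $\calS^c = [|\calE|d^2]$ so $\|(\Dtil a^*)_{\calS^c}\|_1 = \|\Dtil a^*\|_1$; substituting into \eqref{eq:twoS-bound} and noting $1+\sqrt{|\calS|}/\kappa_\calS = 1$ yields the displayed bound, and $\Phi_\calS = 1 + 0 + \|\Dtil a^*\|_1$ by its definition. For the \emph{few-changes regime} I set $\calS = \supp(\Dtil a^*)$, so $(\Dtil a^*)_{\calS^c} = 0$, killing the square-root term entirely; writing $s = \|\Dtil a^*\|_0 = |\calS|$ gives the stated bound, and $\Phi_\calS = 1 + \sqrt s/\kappa_\calS$ since the $\ell_1$-residual term vanishes.

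The substantive content here is entirely upstream — it lives in Theorem~\ref{thm:main_result_tv_pen} and its reduction to Corollary~\ref{cor:stable-main} — so the only genuine bookkeeping in this corollary is verifying that the algebraic simplification of the $\lambda$-substitution is clean and that the constants can be merged. The one point requiring a sentence of care is that the choice $\lambda$ in \eqref{eq:lambda_corr_stab} must actually satisfy the hypothesis $\lambda \ge \frac2m\max\{F_1,F_2\}$ of Theorem~\ref{thm:main_result_tv_pen}; this is precisely what is asserted (for a suitably large $c_1$) in the paragraph preceding \eqref{eq:lambda_corr_stab}, via the stable-case bounds on $F_1,F_2$, so I would simply cite that. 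I do not anticipate a real obstacle; if anything, the mildly delicate step is confirming that $\Phi_\calS$ as recorded in each bullet matches the general definition once the corresponding term is specialized — a one-line check in each case.
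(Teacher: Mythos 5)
Your proposal is correct and follows essentially the same route as the paper: Corollary~\ref{cor:stable-twoS} is obtained by plugging $\lambda$ from \eqref{eq:lambda_corr_stab} into \eqref{eq:err-main}, dividing by $\sqrt{m}$, absorbing constants, and then specializing $\calS$ to $\emptyset$ (with $\kappa_\emptyset=1$) and to $\supp(\Dtil a^*)$ (killing the $\ell_1$-residual term), with the validity of the $\lambda$ choice resting on the stable-case bounds for $F_1,F_2$ established in the proof of Corollary~\ref{cor:stable-main}. Your algebra (the collapse of $\tfrac{2m\lambda}{T\sqrt m}$ to $\tfrac{2c_1\mathfrak{M}}{\sqrt{mT}}$ and of the square-root term to $\sqrt{\tfrac{8c_1\mathfrak{M}}{m\sqrt T}\|(\Dtil a^*)_{\calS^c}\|_1}$) checks out, and the identifications of $\Phi_\calS$ in each regime match the general definition.
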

\begin{remark}[Regime selection]
Equating the two bounds in Corollary~2 yields the switch criterion
\(
\|\Dtil a^*\|_1 \lesssim \frac{\mathfrak{M}}{\sqrt{T}}\big(s/\kappa_{\calS}^2\big),
\)
favoring \(S=\emptyset\) when many changes are tiny (small TV magnitude), and
\(\calS=\mathrm{supp}(\Dtil a^*)\) when the number of changes \(s\) is small  on a graph with large \(\kappa_{\calS}\).
\end{remark}
Notice from the sampling conditions \eqref{eq:C1}–\eqref{eq:C3b} that $T \gtrsim \frac{v}{\Delta^2}$ is necessary for them to be satisfied. In case $T = c \frac{v}{\Delta^2}$ for a large enough constant $c$, then \eqref{eq:C1} would hold provided $\mu' = o(1)$ as $m$ increases.  More precisely, $\mu'$ would need to go to zero sufficiently fast w.r.t $m$. 
%
\begin{corollary}[Small-$T$]
\label{cor:smallT-DeltaG}
Assume $\|A^*_l\|_2\le \rho_{\max}<1$ for all $l$. There exists $c_1 < 1$ such that for any $\delta\in(0,c_1)$, the following holds. Let $v \asymp \log^2\!\big(1/\delta\big)$, $T \asymp v/\Delta^2$  (so $T\asymp \frac{\log^2(1/\delta)}{\Delta^2}$). If in addition
{
\begin{enumerate}
\item[(i)] (Condition \ref{eq:C2}) $m\;\gtrsim\; \frac{d^2}{\log^2(1/\delta)} + \log^2(1/\delta)$, 
%
%
\item[(ii)] \textbf{Smooth regime }($\calS= \emptyset$):
\begin{align*}
 \mu'\Big(1+\rev{\frac{\log(1/\delta)}{\Delta}}\|\Dtil \,a^*\|_1\Big) \ &\lesssim\ 1/\sqrt{\log(\abs{\calE}d/\delta)}, \tag{Condition \ref{eq:C1}}\\
\mu\Big(1+\rev{\frac{\log(1/\delta)}{\Delta}}\|\Dtil \,a^*\|_1\Big)\, &\lesssim\
\min\set{\frac{\sqrt{m}\log^2(1/\delta)}{\,d\Delta_G \Delta^2\,}, \frac{\sqrt{m} \log(1/\delta)}{\rev{d}\log(d\abs{\calE}/\delta)}} \tag{Conditions \ref{eq:C3a}, \ref{eq:C3b}},
\end{align*}
%
%
\item[(iii)] \textbf{Few‑changes regime }($\calS=\operatorname{supp}(\Dtil \,a^*), \ s=\|\Dtil \,a^*\|_0$):
\begin{align*}
\mu'\!\Big(1+\frac{\sqrt{s}}{\kappa_{\calS}}\Big) \ &\lesssim\ 1/\sqrt{\log(\abs{\calE}d/\delta)}, \tag{Condition \ref{eq:C1}} \\
\mu\!\Big(1+\frac{\sqrt{s}}{\kappa_{\calS}}\Big)  \ &\lesssim\
\min\set{\frac{\sqrt{m}\log^2(1/\delta)}{\,d\Delta_G \Delta^2\,}, \frac{\sqrt{m} \log(1/\delta)}{\rev{d}\log(d\abs{\calE}/\delta)}} \tag{Conditions \ref{eq:C3a}, \ref{eq:C3b}},
\end{align*}
\end{enumerate}
}
then with $\lambda$ as in \eqref{eq:lambda_corr_stab} the corresponding bounds from Corollary \ref{cor:stable-twoS} hold w.p at least $1-\delta$. 
\end{corollary}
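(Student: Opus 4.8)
The plan is to derive Corollary~\ref{cor:smallT-DeltaG} as a direct \emph{specialization} of Corollary~\ref{cor:stable-twoS}: the task reduces to showing that, under the stated scaling $v\asymp\log^2(1/\delta)$, $T\asymp v/\Delta^2$, together with hypotheses (i)–(iii), the four sample–size conditions \eqref{eq:C1}–\eqref{eq:C3b} are all met, after which the probability statement and the error bound are inherited verbatim. So first I would substitute $T\asymp v/\Delta^2$ into each of \eqref{eq:C1}–\eqref{eq:C3b} and read off what the substitution demands. For \eqref{eq:C1}, the choice $T = c\,v/\Delta^2$ makes the inequality $T\ge C\frac{v}{\Delta^2}(1+\mu'\Phi_\calS\sqrt{L_2})^2$ equivalent (for the constant $c$ large enough relative to $C$) to $\mu'\Phi_\calS\sqrt{L_2}\lesssim 1$, i.e. $\mu'\Phi_\calS\lesssim 1/\sqrt{L_2}$; since $L_2=\log(d|\calE|/\delta)$, this is precisely the first displayed bound in (ii) (resp.\ (iii)), once one recalls $\Phi_\emptyset = 1+\|\Dtil a^*\|_1$ and, in the few-changes case, $\Phi_\calS = 1+\sqrt{s}/\kappa_\calS$ with $\|(\Dtil a^*)_{\calS^c}\|_1=0$.

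Next I would handle \eqref{eq:C2}. With $T\asymp v/\Delta^2 \asymp \log^2(1/\delta)/\Delta^2$, the condition $T\ge C(d+v)^2/(m\Delta^2)$ becomes $\log^2(1/\delta)\gtrsim (d+\log^2(1/\delta))^2/m$, i.e. $m\gtrsim (d+\log^2(1/\delta))^2/\log^2(1/\delta)$, which — using $(d+\log^2(1/\delta))^2 \lesssim d^2 + \log^4(1/\delta)$ — is implied by $m\gtrsim d^2/\log^2(1/\delta) + \log^2(1/\delta)$; this is exactly hypothesis (i). Then for \eqref{eq:C3a}, substituting $T\asymp\log^2(1/\delta)/\Delta^2$ into $T\ge C\mu^2\Phi_\calS^2 L_2^2/(m\Delta^2)$ gives $\log^2(1/\delta)\gtrsim \mu^2\Phi_\calS^2 L_2^2/m$, i.e. $\mu\Phi_\calS \lesssim \sqrt{m}\log(1/\delta)/\log(d|\calE|/\delta)$, which is the second term in the $\min$ of the second displayed bound in (ii)/(iii). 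Finally for \eqref{eq:C3b}, substituting the same $T$ into $T\ge C\frac{\mu}{\sqrt m}\Phi_\calS\, d\,\Delta_G$ yields $\log^2(1/\delta)/\Delta^2 \gtrsim \frac{\mu}{\sqrt m}\Phi_\calS d\Delta_G$, i.e. $\mu\Phi_\calS \lesssim \sqrt{m}\log^2(1/\delta)/(d\,\Delta_G\,\Delta^2)$, the first term in that $\min$. Taking the two together is exactly the displayed $\min\{\cdots\}$ bound, so all of \eqref{eq:C1}–\eqref{eq:C3b} hold.

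Having verified the side conditions, I would invoke Corollary~\ref{cor:stable-twoS} with the stated $\lambda$ from \eqref{eq:lambda_corr_stab}: its conclusion \eqref{eq:twoS-bound} (and the two bulleted specializations for $\calS=\emptyset$ and $\calS=\supp(\Dtil a^*)$) holds with probability at least $1-4e^{-c'\sqrt v}-\delta$. It then remains only to absorb the $4e^{-c'\sqrt v}$ term into $\delta$: since $v\asymp\log^2(1/\delta)$, we get $\sqrt v\asymp\log(1/\delta)$, so $4e^{-c'\sqrt v}\le 4\delta^{c'}$, and for $\delta$ below a suitable threshold $c_1<1$ (and, if desired, after rescaling the implied constant in $v\asymp\log^2(1/\delta)$ so that $c'\sqrt v\ge\log(8/\delta)$) this is $\le\delta$, giving the overall probability $1-2\delta$, which one re-labels as $1-\delta$ by a harmless constant adjustment. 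This yields exactly the claimed statement.

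I expect the only genuinely delicate point to be bookkeeping the constants so that the three uses of ``$T\asymp v/\Delta^2$'' are simultaneously compatible: \eqref{eq:C1} needs the proportionality constant in $T$ \emph{large} relative to $C$, whereas the conclusions of Corollary~\ref{cor:stable-twoS} are clean for \emph{any} valid $T$, so there is no tension — one just fixes $c$ in $T=c\,v/\Delta^2$ at the end. A secondary subtlety is that $L_1=\log(dT/(\delta\Delta))$ and hence $\mathfrak M$ implicitly depend on $T$ and $\Delta$; but since $T\asymp\log^2(1/\delta)/\Delta^2$, one has $L_1\asymp\log(d\log^2(1/\delta)/(\delta\Delta^3))$, which is still polylogarithmic in the relevant quantities and gets carried along unchanged into the final bound. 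No new probabilistic work is required; the entire proof is a substitution-and-constant-tracking argument on top of Corollary~\ref{cor:stable-twoS}.
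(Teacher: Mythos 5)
Your proposal is correct and follows essentially the same route the paper intends (and sketches in its suppressed proof note): substitute $v\asymp\log^2(1/\delta)$, $T\asymp v/\Delta^2$ into the side conditions \eqref{eq:C1}–\eqref{eq:C3b}, observe that each reduces exactly to hypotheses (i)–(iii) with $\Phi_\emptyset=1+\|\Dtil a^*\|_1$ resp.\ $\Phi_\calS=1+\sqrt{s}/\kappa_\calS$, and then invoke Corollary~\ref{cor:stable-twoS}. Your handling of the probability term, absorbing $4e^{-c'\sqrt v}\lesssim\delta^{c'}$ into $\delta$ via the constant in $v$ and the threshold $c_1$, is the standard adjustment the paper implicitly relies on.
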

%
%
%

Let us now instantiate Corollary \ref{cor:smallT-DeltaG} for two examples of well connected graphs $G$  for which $\mu$ and $\mu'$ go to zero sufficiently fast w.r.t $m$. For simplicity, we focus on the smooth-regime ($\calS = \emptyset$). 
%
\begin{example}[Complete graph, $S=\emptyset$, small $T$]
\label{ex:complete-DeltaG}
For the complete graph, $\lambda_{m-1} = m$, and Proposition  \ref{prp:inversescfac_bd} yields $\mu\lesssim 1/m$, $\mu'\lesssim 1/\sqrt{m}$. Since $\abs{\calE} \asymp m^2$, 
%
%
%
$T \asymp \frac{\log^2(1/\delta)}{\Delta^2}$ and $m \gtrsim\; \frac{d^2}{\log^2(1/\delta)} + \log^2(1/\delta)$, hence the conditions involving $\mu, \mu'$ in Corollary \ref{cor:smallT-DeltaG} are satisfied provided
\begin{align*}
\Big(1+\rev{\frac{\log(1/\delta)}{\Delta}}\|\Dtil \,a^*\|_1\Big)\, &\lesssim\
\min\set{\frac{m^{3/2}\log^2(1/\delta)}{\,d\Delta_G \Delta^2\,}, \frac{\sqrt{m}}{\sqrt{\log(dm/\delta)}}
}.
\end{align*}
Moreover, we have that $\mathfrak{M} \lesssim \frac{d^{3/2}}{\Delta^{1/2}} \log(\frac{dT}{\delta \Delta})$. Consequently,  for $\lambda$ as in \eqref{eq:lambda_corr_stab}, 
\begin{align*}
    \frac{1}{m} \norm{ \est{a} - a^*}_2^2 \lesssim \left(\frac{d^{3}}{\Delta} \frac{\log^2(\frac{dT}{\delta \Delta})}{mT} +  \frac{d^{3/2}}{\Delta^{1/2}} \frac{\log(\frac{dT}{\delta \Delta}) \norm{\Dtil a^*}_1}{m\sqrt{T}} \right) 
\end{align*}
holds w.p at least $1-\delta$. Hence if $\norm{\Dtil a^*}_1$ is small enough, then the MSE goes to zero w.r.t $m$, even when $T$ is fixed.
\end{example}

\begin{example}[Erdős–Rényi $G(m,p)$, $S=\emptyset$, small $T$]
\label{ex:ER-DeltaG}
It is well known that for any $\delta \in (0,1)$,  $$\prob(\lambda_{m-1} \geq mp/2) \geq 1-\delta$$ holds provided $p \geq c \log(m/\delta)/m$ for some suitably large constant $c > 0$ (see e.g., \cite[Prop. 4]{tyagi2025jointestimationsmoothgraph}).
Then, Proposition~ ~\ref{prp:inversescfac_bd} gives $\mu,\mu'\lesssim (mp)^{-1/2}$. 
%
Now the quantity $\normnew{\Dtil a^*}_1$ is dependent on $G$, and hence also random, however it is possible to control it via Bernstein's inequality. Indeed, note that 
\begin{align*}
    \normnew{\Dtil a^*}_1 = \sum_{l < l'} \norm{a^*_l - a^*_{l'}}_1 Y_{l,l'}
\end{align*}
where $Y_{l,l'} \stackrel{i.i.d}{\sim} \calB(p)$ (i.i.d Bernoulli random variables) for each $l < l' \in [m]$. Then denoting 
\begin{align*}
    S_1 := \sum_{l < l'} \norm{a^*_l - a^*_{l'}}_1, \ S_2 := \sqrt{\sum_{l < l'} \norm{a^*_l - a^*_{l'}}_1^2}, \ S_3 := \max_{l < l'} \norm{a^*_l - a^*_{l'}}_1
\end{align*}
it follows readily from Bernstein's inequality (see e.g., \cite{boucheron_book}) that w.p $\geq 1-2n^{-c}$ (for constants $c, c_1 > 0$),  
\begin{align*}
    \normnew{\Dtil a^*}_1 \leq c_1 \left(p S_1 + \sqrt{p\log m} S_2 + S_3 \log m \right).
\end{align*}
Since $\abs{\calE} \leq \frac{m(m-1)}{2}$ a.s, $T \asymp \frac{\log^2(1/\delta)}{\Delta^2}$ and $m \gtrsim\; \frac{d^2}{\log^2(1/\delta)} + \log^2(1/\delta)$, hence the conditions involving $\mu, \mu'$ in Corollary \ref{cor:smallT-DeltaG} are satisfied provided 
\begin{align}\label{eq:corr_ergraph_ex_tmp1}
        \frac{1 + \Big(p S_1 + \sqrt{p\log m} S_2 + S_3 \log m \Big) \rev{\frac{\log(1/\delta)}{\Delta}}}{\sqrt{mp}} \lesssim 
           \min\set{\frac{\sqrt{m} \log^2(1/\delta)}{d \Delta_G \Delta^2}, \frac{1}{\rev{d}\log(md/\delta)}}.
    \end{align}
Note that \eqref{eq:corr_ergraph_ex_tmp1} is ensured, for instance, if $p \asymp 1$ and $S_1 + S_2 + S_3 = o(\sqrt{m}/ \polylog (m))$. 

Now, we also have  
$$\mathfrak{M} \lesssim   
        \frac{1}{\sqrt{\Delta}}
        \max\set{d^{3/2} \log\!\Big(\tfrac{d T}{\delta\,\Delta}\Big) , \frac{\log\!\Big(\tfrac{d m}{\delta}\Big)}{\sqrt{mp}}} =: \mathfrak{M}^*.$$
Hence putting everything together,  
we have for $\lambda \asymp \mathfrak{M}^* \frac{\sqrt{T}}{m}$ that w.p at least $1- \delta - 2n^{-c}$,  
\begin{align*}
    \frac{1}{m} \norm{ \est{a} - a^*}_2^2 \lesssim  \frac{(\mathfrak{M}^*)^2}{mT} \;
\;+\;
\frac{\mathfrak{M}^* \left(p S_1 + \sqrt{p\log m} S_2 + S_3 \log m \right)}{m\sqrt{T}}.
\end{align*}
Hence if $(A^*_l)_{l=1}^m$ are sufficiently smooth w.r.t the complete graph (in the sense that $S_1, S_2$ and $S_3$ are suitably small), and $p$ is suitably large, then the MSE goes to zero w.r.t $m$, even when $T$ is fixed.
\end{example}
Next we instantiate Corollary \ref{cor:stable-twoS} for examples of $G$ where $T$ is required to grow with $m$, albeit mildly, for consistency of the MSE (w.r.t $m$). For simplicity, we again focus on the smooth-regime ($\calS = \emptyset$). 
\begin{example}[Star graph, $S=\emptyset$, moderate-sized $T$] \label{ex:star_graph}
For the star graph $\lambda_{m-1} = 1$, and Proposition \ref{prp:inversescfac_bd} yields $\mu,\mu' \leq 1$. Also,  $\abs{\calE} \asymp m$. Take $v \asymp \log^2(1/\delta)$ as before, then conditions \ref{eq:C1}-\ref{eq:C3b} correspond to
\begin{align*}
    T \gtrsim \max\set{\frac{\log^2(1/\delta)}{\Delta^2}\Phi_\calS^2  \log(dm/\delta), 
    \frac{d^2 + \log^4(1/\delta)}{m\,\Delta^2},
   \frac{\Phi_\calS\,d\,\Delta_G}{\sqrt{m}}, \rev{\frac{\Phi_\calS^2  d^2\log^2(dm/\delta)}{m\Delta^2}}},
\end{align*}
where we recall $\Phi_\calS = 1+\rev{\sqrt{T}}\|\Dtil \,a^*\|_1$. 

To get a feel for the above condition on $T$, consider the very smooth regime where $\|\Dtil \,a^*\|_1 \rev{\lesssim 1/\sqrt{T}}$ \rev{and} $\Delta_G \leq c$ for some constant $c \geq 1$. Then if $m \gtrsim\; d^2 + \log^4(1/\delta)$, we require $T \gtrsim \frac{\log^2(1/\delta)}{\Delta^2} \rev{\log^2(dm/\delta)}$ i.e., $T$ is only required to grow logarithmically w.r.t both $m$ and dimension $d$.

We also have the bound 
$$\mathfrak{M} \lesssim \frac{1}{\sqrt{\Delta}} \max\set{d^{3/2} \log\Big(\frac{dT}{\delta \Delta} \Big), \log(dm/\delta)} =: \mathfrak{M}_1^*,$$ 
hence, for $\lambda$ as in \eqref{eq:lambda_corr_stab},
\begin{align*}
  \frac{1}{m}\|\widehat a-a^*\|_2^2
\;\lesssim\;
\frac{(\mathfrak{M}_1^*)^2}{mT}
\;+\;
\frac{\mathfrak{M}_1^*}{m\sqrt{T}}\;\|\Dtil a^*\|_1
\end{align*}
holds w.p at least $1-\delta$. 
\end{example}
\begin{example}[2D grid, $S=\emptyset$, moderate-sized $T$]\label{ex:2d_grid}
For the 2D grid, we have $\mu, \mu' \lesssim \sqrt{\log m}$ as shown in Table \ref{tb:values_graph_parameters}. Also,  $\abs{\calE} \asymp m$. Take $v \asymp \log^2(1/\delta)$ as before, then conditions \ref{eq:C1}-\ref{eq:C3b} correspond to
\begin{align*}
    T \gtrsim \max\set{\frac{\log^2(1/\delta)}{\Delta^2} \Phi_\calS^2  \log^2(dm/\delta) , 
    \frac{d^2 + \log^4(1/\delta)}{m\,\Delta^2},
    \Phi_\calS\,d\,\Delta_G \sqrt{\frac{\log m}{m}}, \rev{\frac{\Phi_\calS^2  d^2 \log m \log^2(dm/\delta)}{m\Delta^2}}},
\end{align*}
where $\Phi_\calS = 1+ \rev{\sqrt{T}}\|\Dtil \,a^*\|_1$. As before, consider for simplicity the very smooth regime where $\rev{\|\Dtil \,a^*\|_1 \lesssim 1/\sqrt{T}}$ \rev{and}  $\Delta_G \leq c$ for a constant $c \geq 1$. Then if $\frac{m}{\log m} \gtrsim d^2 + \log^4(1/\delta)$, we require $T \gtrsim \frac{\log^2(1/\delta)}{\Delta^2} \log^2(dm/\delta)$.

Using the bound
$$\mathfrak{M} \lesssim \frac{1}{\sqrt{\Delta}} \max\set{d^{3/2} \log\Big(\frac{dT}{\delta \Delta} \Big), \log^{3/2}(dm/\delta)} =: \mathfrak{M}_2^*$$ 
we then have for $\lambda$ as in \eqref{eq:lambda_corr_stab}, that  
\begin{align*}
  \frac{1}{m}\|\widehat a-a^*\|_2^2
\;\lesssim\;
\frac{(\mathfrak{M}_2^*)^2}{mT}
\;+\;
\frac{\mathfrak{M}_2^*}{m\sqrt{T}}\;\|\Dtil a^*\|_1
\end{align*}
holds w.p at least $1-\delta$. 
\end{example}
\begin{remark}[Estimating each $A^*_l$ separately] \label{rem:ind_est_comparison}
    As a sanity check, it is worth comparing the error bounds in the above examples with that obtained by simply estimating each $A^*_l$ individually via the ordinary least-squares (OLS) estimator. Let $\est{A}_{\ols,l}$ be the OLS estimate at node $l$, then it was shown in \cite{Jedra20} that w.p at least $1-\delta$, 
    \begin{align*}
           \norm{\est{A}_{\ols,l} - A^*_l}_2 \lesssim \sqrt{\frac{\log(1/\delta) + d}{T}} \quad \text{ if } \quad T \gtrsim  \Delta^{-1}(\log(1/\delta) + d). 
    \end{align*}
    By taking a union bound, this means that w.p at least $1-\delta$, 
    \begin{align} \label{eq:ols_mse_bound}
       \frac{1}{m} \sum_{l=1}^m \norm{\est{A}_{\ols,l} - A^*_l}_F^2 \lesssim  \frac{d\log(m/\delta) + d^2}{T} \quad \text{ if } \quad T \gtrsim  \Delta^{-1}(\log(m/\delta) + d).    
    \end{align}
    For sufficiently smooth $A^*_l$'s, we saw in Examples \ref{ex:star_graph} and \eqref{ex:2d_grid} that the dependence of $T$ can at times be logarithmic w.r.t both $m$ and $d$. Furthermore, notice that if $T = \Theta(\log(m))$, then the MSE in \eqref{eq:ols_mse_bound} is $O(1)$ w.r.t $m$ while the corresponding bounds in Examples \ref{ex:star_graph} and \ref{ex:2d_grid} are $o(1)$.
\end{remark}

\section{Proof of Theorem \ref{thm:main_result_tv_pen}} \label{sec:proof_main_theorem}
Since $\est{a}$ is a solution of \eqref{eq:tv_pen_ls_vecform} and $a^*$ is feasible, we have %
\begin{align*}
    \frac{1}{2m} \norm{\xtil - Q \est{a}}_2^2 + \lambda \norm{\Dtil \est{a}}_1 \leq  \frac{1}{2m} \norm{\xtil - Q a^*}_2^2 + \lambda \norm{\Dtil a^*}_1.
\end{align*}
Using \eqref{eq:model_vec_form} in the above inequality, we obtain after some simple calculations
\begin{align} \label{eq:proof_temp1}
    \frac{1}{2m}\norm{Q (a^* - \est{a})}_2^2 \leq \frac{1}{m} \dotprod{\est{a} - a^*}{Q^\top \eta} + \lambda \norm{\Dtil a^*}_1 - \lambda \norm{\Dtil \est{a}}_1.
\end{align}
Let $\Pi, \Pitil$ denote the projection matrices for $\nullsp(D)$ and $\nullsp(\Dtil)$ respectively. Since $G$ is connected, we have 
$$\Pi = \frac{1}{m}\ones_m \ones_m^\top \ \text{ and } \  \Pitil = (\frac{1}{m}\ones_m \ones_m^\top) \otimes I_{d^2}.$$ 
Moreover, $\Dtil^\dagger \Dtil = (D^\dagger D) \otimes I_{d^2} = I_{md^2} - \Pitil$ is the projection matrix for the orthogonal complement of $\nullsp(\Dtil)$. With this in mind, we can bound 
\begin{align} \label{eq:proof_temp2}
     \dotprod{\est{a} - a^*}{Q^\top \eta} &= \eta^\top Q \Pitil(\est{a} - a^*) + \eta^\top Q (\Dtil^\dagger \Dtil)(\est{a} - a^*) \nonumber \\
    &\leq \norm{\Pitil Q^\top \eta}_2 \norm{\est{a} - a^*}_2 + \norm{((D^\dagger)^\top \otimes I_{d^2}) Q^\top \eta}_{\infty} \norm{\Dtil(\est{a} - a^*)}_1.
\end{align}
The following lemma's bound the terms $\norm{\Pitil Q^\top \eta}_2$ and $\norm{((D^\dagger)^\top \otimes I_{d^2}) Q^\top \eta}_{\infty}$. The crux of the proof is based on  tail bounds for the norm of self-normalized vector-valued martingales, using ideas in the proof of \cite[Theorem 1]{abbasi11}. 
%
%
\begin{lemma} \label{lem:l2_bd_term1}
    There exist constants $c_1 > 0$, $c_2 \in (0,1)$ such that the following is true. For any $\delta \in (0,1)$, denote
    \begin{equation*}
        \zeta_1(m,T,\delta) :=  c_1 \Big(\sum_{l=1}^m \sum_{t=0}^{T-1} \Tr(\Gamma_t(A^*_l)) \Big)  \log(1/\delta). 
    \end{equation*}
    Then for $\delta{\in}(0,c_2)$, it holds with probability at least $1-2\delta$ that 
    \begin{align*}
        \norm{\Pitil Q^\top \eta}_2 \leq \sqrt{2}\Big(\frac{\zeta_1(m,T,\delta)}{m} + 1 \Big)^{1/2} \Big( \log(1/\delta) + \frac{d^2}{2} \log\Big(\frac{\zeta_1(m,T,\delta)}{m} + 1 \Big)\Big)^{1/2} =: F_1.
    \end{align*}
\end{lemma}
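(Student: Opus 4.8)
The plan is to reduce $\norm{\Pitil Q^\top\eta}_2$ to the Euclidean norm of a single $\matR^{d^2}$‑valued martingale, and then control that norm via a self‑normalized (``method of mixtures'') inequality in the spirit of \cite[Theorem 1]{abbasi11}. Since $G$ is connected, $\Pitil=(\tfrac1m\ones_m\ones_m^\top)\otimes I_{d^2}$, so every block of $\Pitil Q^\top\eta$ equals $\tfrac1m\sum_{l=1}^m(Q^\top\eta)_l$; using $(Q^\top\eta)_l=\vect(E_lX_l^\top)=\sum_{t=1}^{T}x_{l,t}\otimes\eta_{l,t+1}$ (from the identity $\vect(uv^\top)=v\otimes u$), this gives $\norm{\Pitil Q^\top\eta}_2^2=\tfrac1m\norm{S}_2^2$ with $S:=\sum_{l=1}^m\sum_{t=1}^T x_{l,t}\otimes\eta_{l,t+1}\in\matR^{d^2}$. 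It therefore suffices to show $\norm S_2^2\le m F_1^2$ on an event of probability at least $1-2\delta$.

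For the martingale step I would order the pairs $(l,t)$ and let $\calF_{(l,t)}$ be the $\sigma$‑algebra generated by all noise vectors occurring before $\eta_{l,t+1}$. Then $x_{l,t}$ is $\calF_{(l,t)}$‑measurable while $\eta_{l,t+1}\sim\calN(0,I_d)$ is independent of it, so the increment $x_{l,t}\otimes\eta_{l,t+1}$ is conditionally centered, and by Gaussianity $\expec[\exp\dotprod{\xi}{x_{l,t}\otimes\eta_{l,t+1}}\mid\calF_{(l,t)}]=\exp(\tfrac12\dotprod{\xi}{((x_{l,t}x_{l,t}^\top)\otimes I_d)\xi})$ for every $\xi\in\matR^{d^2}$. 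This is precisely the structure needed to run the Gaussian‑mixture supermartingale argument of \cite{abbasi11} in $\matR^{d^2}$: with a $\calN(0,\tfrac1m I_{d^2})$ prior I expect to get, with probability at least $1-\delta$, $S^\top(mI_{d^2}+V)^{-1}S\le 2\log\!\big(\det(mI_{d^2}+V)^{1/2}/(m^{d^2/2}\delta)\big)$, where $V:=W\otimes I_d$ and $W:=\sum_{l=1}^m\sum_{t=1}^T x_{l,t}x_{l,t}^\top=\sum_{l=1}^mX_lX_l^\top$. Using the Kronecker identity $mI_{d^2}+V=(mI_d+W)\otimes I_d$ — whence $\det(mI_{d^2}+V)/m^{d^2}=\det(I_d+W/m)^d\le(1+\lambda_{\max}(W)/m)^{d^2}$ and $\lambda_{\max}(mI_{d^2}+V)=m+\lambda_{\max}(W)$ — together with $\norm S_2^2\le\lambda_{\max}(mI_{d^2}+V)\cdot S^\top(mI_{d^2}+V)^{-1}S$, this yields $\norm S_2^2\le 2(m+\lambda_{\max}(W))\big(\log(1/\delta)+\tfrac{d^2}{2}\log(1+\lambda_{\max}(W)/m)\big)$, a bound monotone in $\lambda_{\max}(W)$.

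It then remains to control $W$ and match the stated $\zeta_1$. Stacking columns gives $\vect(X_l)=\Atil^*_l\zeta_l$ with $\zeta_l:=\vect([\eta_{l,1}\cdots\eta_{l,T}])\sim\calN(0,I_{dT})$ independent across $l$, hence $\Tr(W)=\sum_{l=1}^m\norm{\Atil^*_l\zeta_l}_2^2=\zeta^\top M\zeta$ where $\zeta:=(\zeta_l)_l\sim\calN(0,I_{mdT})$ and $M:=\blkdiag((\Atil^*_l)^\top\Atil^*_l)_{l=1}^m\succeq0$. Here $\norm M_2=\max_l\norm{\Atil^*_l}_2^2\le\beta^2$, $\Tr M=\sum_{l=1}^m\norm{\Atil^*_l}_F^2=\sum_{l=1}^m\sum_{t=0}^{T-1}\Tr(\Gamma_t(A^*_l))$ (the quantity appearing in $\zeta_1$), $\norm M_F^2\le\norm M_2\Tr M\le\beta^2\Tr M$, and $\Tr M\ge\beta^2$ since Frobenius dominates spectral norm. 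A Laurent--Massart (or Hanson--Wright) deviation bound for $\zeta^\top M\zeta$ at level $x=\log(1/\delta)$, combined with $\beta^2\le\Tr M$ and $\log(1/\delta)\ge1$ (which forces $\delta<c_2$, e.g. $c_2=e^{-1}$), gives $\lambda_{\max}(W)\le\Tr(W)\le c_1\big(\sum_{l=1}^m\sum_{t=0}^{T-1}\Tr(\Gamma_t(A^*_l))\big)\log(1/\delta)=\zeta_1(m,T,\delta)$ with an absolute constant $c_1$, on an event of probability at least $1-\delta$. On the intersection of the two events I substitute $\lambda_{\max}(W)\le\zeta_1$ into the monotone bound above and divide by $m$, obtaining $\norm{\Pitil Q^\top\eta}_2^2\le 2(1+\zeta_1/m)(\log(1/\delta)+\tfrac{d^2}{2}\log(1+\zeta_1/m))=F_1^2$, and taking square roots finishes the proof.

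The main obstacle is the self‑normalized step: because each $\eta_{l,t+1}$ is reused inside the later regressors $x_{l,t'}$, $t'>t$, one cannot condition on the design $W$ and treat the noise as fresh, so the concentration must be extracted through a martingale inequality with the \emph{data‑dependent} normalizer $mI_{d^2}+V$ — this is the crux and the place where the $\eta$--$Q$ dependence is handled. A secondary point is that reducing coordinatewise to the scalar bound of \cite{abbasi11} over the $d$ components of each $\eta_{l,t+1}$ would lose factors of $d$ inside (and outside) the logarithm, so the vector‑valued $\matR^{d^2}$ version is needed; the remaining steps (Kronecker bookkeeping, AM--GM on the determinant, Laurent--Massart) are routine.
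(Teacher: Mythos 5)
Your proposal is correct and follows essentially the same route as the paper: the identical reduction of $\norm{\Pitil Q^\top\eta}_2^2$ to $\frac1m\norm{S_T}_2^2$ with $S_T=\sum_{l,t}x_{l,t}\otimes\eta_{l,t+1}$, the same self-normalized method-of-mixtures bound (\`a la \cite{abbasi11}) with regularizer $\Vbar=mI_{d^2}$, and the same control of the Gram term via a tail bound for the p.s.d.\ quadratic form $\etatil^\top\blkdiag((\Atil^*_l)^\top\Atil^*_l)\etatil$, yielding $\zeta_1$ and then $F_1$. The only cosmetic difference is your use of Laurent--Massart/Hanson--Wright where the paper invokes the Hsu--Kakade--Zhang bound, which changes nothing substantive.
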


%
%
\begin{lemma} \label{lem:linf_bd_term2}
    There exist constants $c_1,c_2 > 0$ and $c_3 \in (0,1)$ such that the following is true. For any $\delta \in (0,1)$, denote
    \begin{equation*}
        \zeta_2(m,T,\delta) := c_1 \invfac^2 \Big[\max_{l\in[m], i\in [d]} e_i^\top \Big( \sum_{t=1}^T\Gamma_{t-1}(A^*_l) \Big) e_i\Big]  \log^2 \Big(\frac{d^2 \abs{\calE}}{\delta} \Big).
    \end{equation*}
    Then for any $\delta \in (0,c_3)$, it holds with probability at least $1-2\delta$ that
    \begin{align*}
        \norm{((D^\dagger)^\top \otimes I_{d^2}) Q^\top \eta}_{\infty} \leq c_2 \zeta_2^{1/2}(m,T, \delta)  =: F_2.
    \end{align*}
\end{lemma}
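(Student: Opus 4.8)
\emph{Step 1 --- reduce to a maximum of scalar sums.} My plan is to first unfold the matrix algebra. Using $(A\otimes B)^{\top}=A^{\top}\otimes B^{\top}$ and $\vect(ABC)=(C^{\top}\otimes A)\vect(B)$, the $l$-th block of $Q^{\top}\eta$ equals $(X_l\otimes I_d)\vect(E_l)=\vect(E_lX_l^{\top})\in\matR^{d^2}$, with $E_lX_l^{\top}=\sum_{t=1}^{T}\eta_{l,t+1}x_{l,t}^{\top}$; and writing $D^{\dagger}=[s_1\;\cdots\;s_{\abs{\calE}}]$ so that $(D^{\dagger})_{l,j}=(s_j)_l$, the $j$-th block of $((D^{\dagger})^{\top}\otimes I_{d^2})Q^{\top}\eta$ is $\vect\bigl(\sum_{l=1}^{m}(s_j)_l\,E_lX_l^{\top}\bigr)$. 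Hence
\[
\norm{((D^{\dagger})^{\top}\otimes I_{d^2})Q^{\top}\eta}_{\infty}
=\max_{j\in[\abs{\calE}],\,a,b\in[d]}\bigl|M_{j,a,b}\bigr|,
\qquad
M_{j,a,b}:=\sum_{l=1}^{m}\sum_{t=1}^{T}(s_j)_l\,(x_{l,t})_b\,(\eta_{l,t+1})_a,
\]
and $\sum_{l=1}^{m}(s_j)_l^{2}=\norm{s_j}_2^{2}\le\invfac^{2}$ for every $j$ by definition of $\invfac$. It then remains to bound each $\abs{M_{j,a,b}}$ and take a union bound over the $\abs{\calE}d^{2}$ indices.

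\emph{Step 2 --- a self-normalized martingale bound.} For a fixed $(j,a,b)$ I would not condition on the states $(x_{l,t})$, since each $(\eta_{l,t+1})_a$ simultaneously multiplies $x_{l,t}$ in $M_{j,a,b}$ and enters the later states $x_{l,t'}$, $t'>t$ --- exactly the coupling between $Q$ and $\eta$ flagged after \eqref{eq:model_vec_form}. Instead I would set up an ordered filtration: process the nodes $l=1,\dots,m$ in turn, and within node $l$ first reveal $\eta_{l,1}$ (so $x_{l,1}$ is known) then reveal $\eta_{l,2},\dots,\eta_{l,T+1}$ one at a time. For the pair $(l,t)$ in this order, $(s_j)_l(x_{l,t})_b$ is measurable w.r.t.\ the $\sigma$-field generated by everything revealed strictly before $\eta_{l,t+1}$, while $(\eta_{l,t+1})_a\sim\calN(0,1)$ is independent of it; so the summands of $M_{j,a,b}$ form a martingale difference sequence, each conditionally Gaussian with predictable variance proxy $(s_j)_l^{2}(x_{l,t})_b^{2}$. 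Then, via the scalar method-of-mixtures argument behind \cite[Theorem~1]{abbasi11} (a Gaussian mixture of precision $\rho>0$), I would obtain: for any \emph{deterministic} $\rho>0$, with probability at least $1-\delta'$,
\[
\abs{M_{j,a,b}}\le\sqrt{\,2(\rho+V_{j,b})\Bigl(\log\tfrac{1}{\delta'}+\tfrac12\log\bigl(1+V_{j,b}/\rho\bigr)\Bigr)},
\qquad
V_{j,b}:=\sum_{l=1}^{m}(s_j)_l^{2}\sum_{t=1}^{T}(x_{l,t})_b^{2}.
\]

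\emph{Step 3 --- control the quadratic variation.} Next I would bound $V_{j,b}$, which is a quadratic form in a standard Gaussian vector with block-diagonal coefficient matrix $\Lambda=\blkdiag\bigl((s_j)_l^{2}\Sigma_{l,b}\bigr)_{l=1}^{m}$, where $\Sigma_{l,b}$ is the covariance of $\bigl((x_{l,1})_b,\dots,(x_{l,T})_b\bigr)$, so $\Tr(\Sigma_{l,b})=e_b^{\top}\bigl(\sum_{t=1}^{T}\Gamma_{t-1}(A^{*}_l)\bigr)e_b$ and $\norm{\Sigma_{l,b}}_2\le\Tr(\Sigma_{l,b})$. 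Since $\sum_l(s_j)_l^{2}\le\invfac^{2}$ and $(s_j)_l^{2}\le\invfac^{2}$, each of $\Tr(\Lambda),\norm{\Lambda}_F,\norm{\Lambda}_2$ is at most $\invfac^{2}\bar\tau$ with $\bar\tau:=\max_{l\in[m],i\in[d]}e_i^{\top}\bigl(\sum_{t=1}^{T}\Gamma_{t-1}(A^{*}_l)\bigr)e_i$. A standard Hanson--Wright / $\chi^{2}$ tail bound then gives an absolute constant $C_0$ with $\prob\bigl(V_{j,b}>C_0\invfac^{2}\bar\tau\log(1/\delta'')\bigr)\le\delta''$ for all sufficiently small $\delta''$.

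\emph{Step 4 --- assemble and union bound.} Finally I would take $\delta'=\delta''=\delta/(\abs{\calE}d^{2})$ and choose the deterministic regularizer $\rho:=C_0\invfac^{2}\bar\tau\log(\abs{\calE}d^{2}/\delta)$ in Step 2. On the intersection of the $\abs{\calE}d^{2}$ martingale events and the $\abs{\calE}d$ events $\{V_{j,b}\le\rho\}$ --- of total probability at least $1-2\delta$ --- one has $V_{j,b}\le\rho$, so $\log(1+V_{j,b}/\rho)\le\log 2$ and, uniformly in $(j,a,b)$,
\[
\abs{M_{j,a,b}}\le\sqrt{4\rho\bigl(\log\tfrac{1}{\delta'}+\tfrac12\log 2\bigr)}\;\lesssim\;\invfac\,\bar\tau^{1/2}\,\log\!\Bigl(\tfrac{\abs{\calE}d^{2}}{\delta}\Bigr),
\]
where the last step uses $\delta<c_3$ for a small enough absolute constant $c_3$ (so $\log(1/\delta')$ exceeds an absolute constant and the Hanson--Wright small-$\delta''$ regime applies). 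Maximizing over $(j,a,b)$ and combining with Step 1 yields the claim with $\zeta_2$ exactly as stated, for suitable absolute constants $c_1,c_2$. I expect the main obstacle --- already encountered in Lemma~\ref{lem:l2_bd_term1} --- to be the dependence of $\eta$ on $Q$, which rules out plain conditioning and forces the ordered-filtration/self-normalized route; the one further subtlety is choosing the mixture precision $\rho$ to equal a high-probability bound on $V_{j,b}$, which is what keeps the final rate free of spurious logarithmic factors (rather than the $\log(1+V_{j,b})$ term one gets with a fixed regularizer).
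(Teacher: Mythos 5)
Your proposal is correct and follows essentially the same route as the paper's proof: reduce to the scalar entries $M_{j,a,b}$, apply the self-normalized martingale bound in the spirit of \cite[Theorem 1]{abbasi11} with a deterministic regularizer, bound the quadratic variation $V_{j,b}$ as a Gaussian quadratic form via a Hanson--Wright/$\chi^2$ tail with $\Tr \le \mu^2\bar\tau$, set the regularizer equal to that high-probability bound, and union over the $\abs{\calE}d^2$ indices. The only cosmetic differences are your node-by-node filtration ordering (the paper orders by time with the sum over $l$ as one increment) and expressing $V_{j,b}$ through the covariance $\Sigma_{l,b}$ rather than the row blocks $\Atil^*_l(s,i)$, both of which are equivalent.
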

The proofs are provided in Sections \ref{subsec:proof_l2_bd_term1} and \ref{subsec:proof_linf_bd_term2}. 
Conditioned on the events of Lemma's \ref{lem:l2_bd_term1} and \ref{lem:linf_bd_term2}, we obtain from \eqref{eq:proof_temp1} and \eqref{eq:proof_temp2} that 
\begin{align*}
    \frac{1}{2m}\norm{Q (a^* - \est{a})}_2^2 
    \leq \frac{F_1}{m}\norm{\est{a} - a^*}_2 + \frac{F_2}{m} \norm{\Dtil(\est{a} - a^*)}_1 + \lambda \norm{\Dtil a^*}_1 - \lambda \norm{\Dtil \est{a}}_1.
\end{align*}
Choosing $\lambda \geq \frac{2}{m}\max\set{F_1, F_2}$, this implies
\begin{align} \label{eq:proof_temp3}
    \frac{1}{m}\norm{Q (a^* - \est{a})}_2^2 
    \leq \lambda\norm{\est{a} - a^*}_2 + \lambda \norm{\Dtil(\est{a} - a^*)}_1 + 2\lambda \norm{\Dtil a^*}_1 - 2\lambda \norm{\Dtil \est{a}}_1.
\end{align}
Now for any $\calS \subseteq \left[d^2 \abs{\calE} \right]$, it is not difficult to verify that 
\begin{align*}
    \norm{\Dtil(\est{a} - a^*)}_1 + \norm{\Dtil a^*}_1 -  \norm{\Dtil \est{a}}_1 \leq  2\norm{(\Dtil(\est{a} - a^*))_{\calS}}_1 + 2\norm{(\Dtil a^*)_{\calS^c}}_1. 
\end{align*}
Plugging this in \eqref{eq:proof_temp3}, we obtain the inequality
\begin{align} \label{eq:proof_temp4}
        \frac{1}{m}\norm{Q (a^* - \est{a})}_2^2 
    \leq \lambda\norm{\est{a} - a^*}_2 + 3\lambda \norm{(\Dtil(\est{a} - a^*))_{\calS}}_1 + 4\lambda \norm{(\Dtil a^*)_{\calS^c}}_1 - \lambda \norm{(\Dtil(\est{a} - a^*))_{\calS^c}}_1.
\end{align}
%
%
%
This implies that $\est{a} - a^*$ lies in the set $\rev{\calCps}$, where 
%
\begin{align} \label{eq:tran_cone_set}
    \rev{\calCps} := \set{h: \norm{(\Dtil h)_{\calS^c}}_1  \leq \norm{h}_2 + 3\norm{(\Dtil h)_{\calS}}_1 + 4\norm{(\Dtil a^*)_{\calS^c}}_1}.
\end{align}
Moreover, \eqref{eq:proof_temp4} also implies 
\begin{align}\label{eq:main_ineq_proof}
\frac{1}{m}\norm{Q (a^* - \est{a})}_2^2 
    &\leq 
    \lambda\norm{\est{a} - a^*}_2 + 3\lambda \norm{(\Dtil(\est{a} - a^*))_{\calS}}_1 + 4\lambda \norm{(\Dtil a^*)_{\calS^c}}_1 \nonumber \\
    &\leq \lambda\Big(1 + 3\frac{\sqrt{\abs{\calS}}}{\kappa_{\calS}}\Big) \norm{\est{a} - a^*}_2 + 4\lambda \norm{(\Dtil a^*)_{\calS^c}}_1 \quad \text{(recall Definition \ref{def:invfac_compat})}. 
\end{align}
%
%
%
Note that conditioned on Lemmas \ref{lem:l2_bd_term1} and \ref{lem:linf_bd_term2}, the inequality \eqref{eq:main_ineq_proof} holds simultaneously for all $\calS \subseteq [\abs{\calE} d^2]$. From this point, \rev{a typical goal would be} to establish a restricted eigenvalue (RE) condition wherein for some $\kappa > 0$ and any fixed $\calS$ (the choice of which is arbitrary), $\norm{Qh}_2^2 \geq \kappa \norm{h}_2^2$ holds simultaneously for all \rev{$h \in \calCps$}. \rev{However, $\calCps$ is not a cone, hence $h \in \calCps$ does not necessarily imply $h/\norm{h}_2 \in \calCps$. Thus some care is needed. Introducing a scale parameter $\scale > 0$, note that we only need to show the RE condition for $\calCps \cap \set{\norm{h}_2 \geq \scale}$. Indeed, if $\est{a} - a^* \in \calCps \cap \set{\norm{h}_2 < \scale}$ then the stated error bound in Theorem \ref{thm:main_result_tv_pen}  holds trivially.}
%
%
%
%
\paragraph{RE analysis.}
Recall from earlier the definition of $\Pitil$, we can write 
\begin{equation*}
h = h_1 + h_2 \ \text{ where } \ h_1 = \Dtil^\dagger \Dtil h \ \text{ and } \ h_2 = \Pitil h.    
\end{equation*}
\rev{Now define the set (for $\scale > 0)$
\begin{equation*}
        \calC_\calS := \set{h: \norm{(\Dtil h)_{\calS^c}}_1  \leq \norm{h}_2 + 3\norm{(\Dtil h)_{\calS}}_1 + \rev{\frac{4}{\scale}}\norm{(\Dtil a^*)_{\calS^c}}_1}
\end{equation*}
and observe that $h/\norm{h}_2 \in \calC_\calS \cap \unitsph$ if $h \in \calCps \cap \set{\norm{h}_2 \geq \scale}.$
This means that for any $h \in \calCps \cap \set{\norm{h}_2 \geq \scale}$}
\begin{equation*}
    \norm{Qh}_2^2 \geq \Big(\inf_{h \in \calC_{\calS} \cap \unitsph} \norm{Qh}_2^2 \Big) \norm{h}_2^2
\end{equation*}
and so our focus now is on lower bounding the term within parentheses. To this end, we begin by observing that
\begin{align} \label{eq:re_ineq_prelim}
    \inf_{h \in \calC_{\calS} \cap \unitsph} \norm{Qh}_2^2 \geq \inf_{h \in \calC_{\calS} \cap \unitsph} \norm{Qh_1}_2^2 + \inf_{h \in \calC_{\calS} \cap \unitsph} \norm{Qh_2}_2^2 - 2\sup_{h \in \calC_{\calS} \cap \unitsph} \abs{\dotprod{Qh_1}{ Qh_2}},
\end{align}
hence the strategy now is to suitably bound the terms in the RHS above. This is stated in the following lemma.
%
%
%
\begin{lemma}[RE for $h_1$ term] \label{lem:RE_h_1}
    %
        %
    %
    There exist constants $c_1, c_2, C_1 \geq 1$ such that the following is true. For any $v \geq 1$, it holds w.p at least $1-2\exp(-c_2\sqrt{v})$ that 
    \begin{align*}
     \forall h \in \calC_{\calS} \cap \unitsph: \  \norm{Qh_1}_2^2 \geq T\norm{h_1}_2^2 
        &- c_1 F_3 - G_3 \sqrt{v}, 
    \end{align*}
    where $F_3, G_3$ are defined as follows.
    \begin{align*}
        F_3 &:= C_1 \beta^2 \Bigl[(\invfac')^2  \Big(4\frac{\sqrt{\abs{\calS}}}{\kappa_{\calS}} + \rev{\frac{4}{\scale}}\norm{(\Dtil a^*)_{\calS^c}}_1 + 1 \Big)^2 \log(d^2\abs{\calE}) \\
        &+ 
    \invfac' \sqrt{T}  \Big(4\frac{\sqrt{\abs{\calS}}}{\kappa_{\calS}} + \rev{\frac{4}{\scale}}\norm{(\Dtil a^*)_{\calS^c}}_1 + 1 \Big) \sqrt{\log(d^2\abs{\calE})} + \sqrt{T} \Bigr], \\
         G_3 &:= C_1 \beta^2 \left(\invfac'  \Big(4\frac{\sqrt{\abs{\calS}}}{\kappa_{\calS}} + \rev{\frac{4}{\scale}}\norm{(\Dtil a^*)_{\calS^c}}_1 + 1 \Big) \sqrt{\log(d^2\abs{\calE})} + \sqrt{T} \right),
    \end{align*}
    where $\kappa_{\calS}$ is the compatibility factor from Definition \ref{def:invfac_compat}.
\end{lemma}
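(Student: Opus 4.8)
The plan is to view $\norm{Qh_1}_2^2$ as a Gaussian chaos in the process noise and to apply the concentration bound of \cite{krahmer14} uniformly over $h\in\calC_{\calS}\cap\unitsph$. Writing $(h_1)_l=\vect(H_l)$ with $H_l\in\matR^{d\times d}$, and using that each trajectory is a fixed linear image of the excitation, namely $\vect(X_l)=\Atil^*_l\,\xi_l$ with $\xi_l:=(\eta_{l,1},\dots,\eta_{l,T})$, one gets $\norm{Qh_1}_2^2=\sum_{l=1}^m\norm{(I_T\otimes H_l)\vect(X_l)}_2^2=\norm{B_{h_1}\xi}_2^2$, where $\xi=(\xi_1,\dots,\xi_m)\sim\calN(0,I_{mdT})$ and $B_{h_1}:=\blkdiag\big((I_T\otimes H_l)\Atil^*_l\big)_{l=1}^m$. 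Since $\expec[X_lX_l^\top]=G_l=\sum_{t=1}^T\Gamma_{t-1}(A^*_l)\succeq T I_d$ (because $\Gamma_0=I_d$), we obtain $\expec\norm{Qh_1}_2^2=\norm{B_{h_1}}_F^2=\sum_l\Tr(H_lG_lH_l^\top)\ge T\sum_l\norm{H_l}_F^2=T\norm{h_1}_2^2$. Hence it suffices to bound $\sup_{h}\big|\norm{Qh_1}_2^2-\expec\norm{Qh_1}_2^2\big|$ over the relevant set, and then lower bound the mean by $T\norm{h_1}_2^2$.

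Next I reduce the index set to a scaled polytope. Since $\Dtil h_2=0$ we have $\Dtil h_1=\Dtil h$, so for $h\in\calC_{\calS}\cap\unitsph$ the defining inequality of $\calC_{\calS}$ in \eqref{eq:tran_cone_set}, combined with $\norm{h_1}_2\le1$ and the compatibility factor from Definition \ref{def:invfac_compat} (applied to $\theta=h_1$), yields $\norm{\Dtil h_1}_1\le R$ with $R:=4\tfrac{\sqrt{\abs{\calS}}}{\kappa_{\calS}}+4\norm{(\Dtil a^*)_{\calS^c}}_1+1$ — exactly the radius appearing in $F_3,G_3$. Writing $h_1=\Dtil^{\dagger}(\Dtil h_1)$, the vector $h_1$ therefore ranges over a subset of $\{\Dtil^{\dagger}w:\norm{w}_1\le R\}\cap\mathbb{B}^{md^2}_2(1)$, so the matrix set $\calA:=\{B_{h_1}:h\in\calC_{\calS}\cap\unitsph\}$ lies in $R\cdot\conv\{\pm B_{\Dtil^{\dagger}e_j}:j\in[d^2\abs{\calE}]\}$, a symmetric polytope with at most $2d^2\abs{\calE}$ vertices (here $w\mapsto B_{\Dtil^\dagger w}$ is linear).

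Then I control the three quantities entering \cite[Theorem 3.1]{krahmer14}. For $A=B_{h_1}\in\calA$: $\norm{A}_{2\to2}=\max_l\norm{(I_T\otimes H_l)\Atil^*_l}_2\le\beta\max_l\norm{H_l}_2\le\beta$, so $d_{2\to2}(\calA)\le\beta$; and $\norm{A}_F^2=\expec\norm{Qh_1}_2^2\le T\beta^2\norm{h_1}_2^2\le T\beta^2$, using $\norm{G_l}_2\le\sum_t\norm{\Gamma_{t-1}(A^*_l)}_2\le T\beta^2$ (each $\Gamma_{t-1}(A^*_l)$ is a diagonal block of $\Atil^*_l(\Atil^*_l)^\top$), so $d_F(\calA)\le\beta\sqrt T$. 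For the Talagrand functional: each vertex $B_{\Dtil^{\dagger}e_j}$ — with $e_j$ indexing an edge $k$ and a coordinate $p\in[d^2]$ — equals $\blkdiag\big((s_k)_l(I_T\otimes E^{(p)})\Atil^*_l\big)_l$ with $\vect(E^{(p)})=e_p$, $\norm{E^{(p)}}_2=1$, so $\norm{B_{\Dtil^{\dagger}e_j}}_{2\to2}\le\beta\norm{s_k}_\infty\le\beta\mu'$; the last step is the crucial one and uses that $(D^\dagger)_{l,k}$ is simultaneously an entry of the column $s_k$ and of the row $s'_l$, hence $\norm{s_k}_\infty\le\max_l\norm{s'_l}_2=\mu'$. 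A standard chaining estimate for $\gamma_2$ of the convex hull of $N$ matrices each of operator norm $\le K$ (bounding it by $\lesssim K\sqrt{\log N}$) then gives $\gamma_2(\calA,\norm{\cdot}_{2\to2})\lesssim\beta\,\mu'\,R\,\sqrt{\log(d^2\abs{\calE})}$. Feeding $d_{2\to2}(\calA)\le\beta$, $d_F(\calA)\le\beta\sqrt T$ and this $\gamma_2$ bound into \cite[Theorem 3.1]{krahmer14}, with the deviation level tuned so that the failure probability is $2\exp(-c_2\sqrt v)$, the deviation is at most $c_1\,\gamma_2(\gamma_2+d_F)+\big(d_{2\to2}(\gamma_2+d_F)+d_{2\to2}^2\big)\sqrt v$, which after substitution is $\le c_1F_3+G_3\sqrt v$ (the stray $\beta^2\sqrt T$ summands in $F_3,G_3$ coming crudely from $d_F(\calA)\le\beta\sqrt T$ and $d_{2\to2}(\calA)\le\beta$). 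Combining with $\expec\norm{Qh_1}_2^2\ge T\norm{h_1}_2^2$ gives the lemma.

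The main obstacle is the third step. One must extract the sharp inverse–scaling constant $\mu'$ (rather than the cruder $\mu$) in the $\gamma_2$ bound, which rests on the observation that entries of $D^\dagger$ are shared between its rows and columns; and one must ensure that the a priori large quantity $d_F(\calA)^2\asymp\beta^2 T$ does not swamp the target $T\norm{h_1}_2^2$ — this is precisely why it is essential that the KMR bound's deterministic term is $\gamma_2(\gamma_2+d_F)$ (so $d_F$ only ever multiplies the small $\gamma_2$ or the factor $\sqrt v$), producing exactly the $\beta^2\mu'\sqrt T R\sqrt{\log(d^2\abs{\calE})}$ and $\beta^2\sqrt T\sqrt v$ contributions that Conditions \ref{item_cond1_manithm}–\ref{item_cond2_manithm} of Theorem \ref{thm:main_result_tv_pen} then force to be $o(T)$. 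A secondary point is verifying the reduction of the second step uniformly, i.e. that $\calC_{\calS}$ genuinely forces $\norm{\Dtil h_1}_1\le R$ for every $h$ on the unit sphere.
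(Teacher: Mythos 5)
Your chaos representation, the lower bound $\expec\norm{Qh_1}_2^2\ge T\norm{h_1}_2^2$, the radii $d_2(\calA)\le\beta$, $d_F(\calA)\le\beta\sqrt T$, the reduction $\norm{\Dtil h_1}_1\le R$, and the final application of the Krahmer--Mendelson--Rauhut bound all match the paper's argument. The gap is in the step you yourself flag as crucial: the bound $\gamma_2(\calA,\norm{\cdot}_{2})\lesssim \beta\mu' R\sqrt{\log(d^2\abs{\calE})}$ obtained from ``a standard chaining estimate for $\gamma_2$ of the convex hull of $N$ matrices each of operator norm $\le K$, bounding it by $\lesssim K\sqrt{\log N}$.'' That estimate is standard only for the \emph{Euclidean (Frobenius)} metric, where it follows from Talagrand's majorizing measure theorem because the Gaussian width of a convex hull equals that of its vertex set. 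The operator norm is not the canonical metric of any Gaussian process, so this argument does not transfer; for a general norm, $\gamma_2$ of a convex hull is not controlled by the $\gamma_2$ (or the diameter) of its vertices, and the usual substitutes (Maurey's empirical method plus Dudley's entropy integral) produce additional logarithmic factors and require a genuine truncation argument, which would change the form of $F_3,G_3$. The obvious repair --- dominating the operator norm by the Frobenius norm and then using the Gaussian-width identity --- is too lossy here: the vertices $B_{\Dtil^\dagger e_j}$ have Frobenius norm up to $\beta\mu\sqrt T$, so you would get $\gamma_2\lesssim \beta\mu R\sqrt{T\log(d^2\abs{\calE})}$, and then $\gamma_2\,d_F\asymp \beta^2\mu R\,T\sqrt{\log(d^2\abs{\calE})}$ is of order $T$ and cannot be absorbed; this is exactly the $\sqrt T$ loss the lemma must avoid.

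The paper closes this step differently, and this is the ingredient your proposal is missing: it proves an $\ell_2\!\to\!\text{operator-norm}$ Lipschitz estimate for the linear map $u\mapsto P(u)$, namely $\norm{P(u)-P(u')}_2\le \beta\max_l\norm{(U(h)-U(h'))(s'_l\otimes I_d)}_2\le \beta\,\invfac'\,\norm{u-u'}_2$, where $\invfac'=\max_l\norm{s'_l}_2$ enters through the full row $\ell_2$ norms of $D^\dagger$ (your vertex-wise observation $\norm{s_k}_\infty\le\invfac'$ only yields $\norm{B_{\Dtil^\dagger u}}_2\le\beta\invfac'\norm{u}_1$, which is weaker and is precisely why you are forced into the unjustified convex-hull claim). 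With this Lipschitz bound, the Lipschitz property of $\gamma_\alpha$ functionals gives $\gamma_2(\calP,\norm{\cdot}_2)\lesssim\beta\invfac'\,\gamma_2(\Dtil(\calC_\calS\cap\unitsph),\norm{\cdot}_2)$, and the majorizing measure theorem together with the inclusion into the $\ell_1$ ball of radius $R$ (Proposition \ref{prop:set_incl_basu}) yields the factor $R\sqrt{\log(d^2\abs{\calE})}$. If you replace your convex-hull step by this Lipschitz-plus-Gaussian-width argument, the rest of your proof goes through and coincides with the paper's.
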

The proof, which is outlined in Section \ref{subsec:proof_RE_h1}, makes use of a concentration result of \cite{krahmer14} for controlling the suprema of second order subgaussian chaos processes involving positive semidefinite (p.s.d) matrices (recalled as Theorem \ref{thm:krahmer_chaos} in Appendix \ref{appsec:sup_chaos_krahmer}). 
%
%
\begin{lemma}[RE for $h_2$ term] \label{lem:RE_h_2}
 There exist constants $c_1, c_2, c_3\geq 1$ such that the following is true. For any $v \geq 1$, it holds w.p at least $1-2\exp(-c_3\sqrt{v})$ that 
   
\[
\forall h \in \mathcal{C}_S \cap \mathbb{S}^{md_2 - 1} : 
\|Q h_2\|_2^2 \geq T \|h_2\|_2^2 
- \frac{c_1}{m} \beta^2 (\sqrt{mT} + d)d 
- \frac{c_2}{m} \beta^2 (\sqrt{mT} + d)v.
\]
\end{lemma}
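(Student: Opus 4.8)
The plan is to exploit the special structure of $h_2 = \Pitil h$. Since $\Pitil = (\tfrac1m \ones_m\ones_m^\top)\otimes I_{d^2}$, every vector in $\nullsp(\Dtil)$ has the form $h_2 = \ones_m \otimes w$ for a single $w \in \matR^{d^2}$ (up to the normalization $\norm{h_2}_2 = \sqrt m\,\norm w_2$). Consequently, writing $w = \vect(W)$ with $W \in \matR^{d\times d}$, the quantity $\norm{Qh_2}_2^2$ decouples across nodes as $\sum_{l=1}^m \norm{(X_l^\top\otimes I_d)w}_2^2 = \sum_{l=1}^m \norm{X_l^\top W^\top}_F^2 = \sum_{l=1}^m \dotprod{W W^\top}{X_l X_l^\top}$. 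Thus the $h_2$-term reduces to a \emph{single} quadratic form $\dotprod{W W^\top}{\sum_l X_l X_l^\top}$ in a $d^2$-dimensional parameter, with no dependence on the cone $\calC_\calS$ beyond the constraint $\norm w_2 = 1/\sqrt m$. The target inequality $\norm{Qh_2}_2^2 \ge T\norm{h_2}_2^2 - (\text{error})$ becomes $\dotprod{W W^\top}{\sum_l X_l X_l^\top} \ge mT\,\norm W_F^2 - (\text{error})$, i.e. a lower bound on the smallest ``eigenvalue'' of the averaged empirical covariance $\sum_l X_l X_l^\top$ acting on p.s.d. test matrices $WW^\top$, uniformly over the sphere $\norm W_F = 1$.

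The second step is to handle the dependencies inside each $X_l$. Here I would use the same device as in the rest of the paper: $X_l = \Xtil^{(0)}_l + (\text{noise propagated through } \Atil^*_l)$, more precisely one writes $\vect(X_l)$ (or its relevant rearrangement) as a linear image $\Atil^*_l E_l^{(\cdot)}$ of the i.i.d. Gaussian excitations, so that $\norm{X_l^\top W^\top}_F^2$ becomes a subgaussian chaos of order two whose governing matrix is controlled by $\beta = \max_l\norm{\Atil^*_l}_2$. Taking expectations gives $\expec\sum_l \norm{X_l^\top W^\top}_F^2 = \sum_l \dotprod{WW^\top}{\expec X_l X_l^\top}$, and since $\expec X_l X_l^\top = \sum_{t=0}^{T-1}\Gamma_t(A_l^*) \succeq \sum_{t=0}^{T-1} I_d = T I_d$ (as $\Gamma_t \succeq I_d$), the mean term is at least $mT\,\norm W_F^2$, which furnishes the leading $T\norm{h_2}_2^2$ after rescaling. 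It remains to bound the fluctuation $\sup_{\norm W_F=1}\bigl|\sum_l \dotprod{WW^\top}{X_l X_l^\top - \expec X_l X_l^\top}\bigr|$ uniformly.

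For this supremum I would invoke Theorem \ref{thm:krahmer_chaos} (the Krahmer–Mendelson–Rauhut bound for suprema of second-order subgaussian chaos over sets of p.s.d. matrices), exactly as in the proof of Lemma \ref{lem:RE_h_1}, but now applied to the much simpler index set $\set{WW^\top : \norm W_F = 1}\subseteq \matR^{d\times d}$ rather than the cone $\calC_\calS$. The relevant $\gamma_2$-type and radius/Frobenius-diameter parameters of this set are all $O(1)$ in $d$ up to the factor $\beta^2$ coming from the noise propagation, which is why the resulting error has the clean form $\tfrac{c_1}{m}\beta^2(\sqrt{mT}+d)d + \tfrac{c_2}{m}\beta^2(\sqrt{mT}+d)v$: the $(\sqrt{mT}+d)$ factor is the operator-norm/variance scale of $\sum_l X_l X_l^\top$ concentrated over $m$ independent blocks, the $d$ is the metric-entropy cost of the $d^2$-dimensional sphere, and $v$ gives the tail exponent $\exp(-c_3\sqrt v)$. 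Finally I divide through by $m$ to return from $\dotprod{WW^\top}{\cdot}$ to $\norm{Qh_2}_2^2 = \sum_l\norm{X_l^\top W^\top}_F^2$ with $\norm{h_2}_2^2 = m\norm W_F^2$, recovering the stated bound.

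\textbf{Main obstacle.} The only genuinely delicate point is that $\norm{X_l^\top W^\top}_F^2$ is a chaos in the \emph{dependent} sequence $(x_{l,t})_t$, so before applying Krahmer et al.\ one must linearize the dependence via $\Atil^*_l$ and verify that the governing matrix has its spectral/Frobenius norms controlled purely by $\beta$ and $d$; the union-over-$l$ step is then free because the blocks are independent, and this is where the $\sqrt{mT}$ (rather than $mT$) concentration scale is earned. Everything else — the reduction to a single $d\times d$ test matrix and the lower bound on $\expec X_l X_l^\top$ — is routine given the paper's setup.
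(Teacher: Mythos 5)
Your plan is essentially the paper's own proof: you reduce $\|Qh_2\|_2^2$ to a quadratic form in the single averaged parameter $w\in\matR^{d^2}$ (equivalently $U(h)$), lower bound the expectation by $mT\|W\|_F^2$ using $\Gamma_t(A_l^*)\succeq I_d$, linearize the dependence via $\vect(X_l)=\Atil_l^*\eta_l$, and control the fluctuation uniformly with the Krahmer et al.\ chaos bound, with $d_2\lesssim\beta$, $d_F\lesssim\beta\sqrt{mT}$ and a $\gamma_2$ term of order $\beta d$ coming from the width of the $d^2$-dimensional ball — exactly the ingredients and bookkeeping in the paper's argument (which works with $\calU\subseteq B_1^{d^2}(d)\cap B_2^{d^2}(1)$ rather than the sphere, but this is only a normalization difference). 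The minor imprecisions (writing $\|w\|_2=1/\sqrt m$ instead of $\le 1/\sqrt m$, and saying the $\gamma_2$/diameters are $O(1)$ in $d$ before correctly attributing the $d$ factor to the entropy of the sphere) do not affect the validity of the approach.
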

The proof, presented in Section~\ref{subsec:proof_RE_h2}, also relies on the aforementioned concentration inequality due to~\cite{krahmer14}. We now present a bound on the ``cross term'' involving both $h_1$ and $h_2$. 
{\begin{lemma}[Control of the cross term]\label{lem:cross-term}
There exist absolute constants $c_1\ge 1$ and $c_2 \in (0,1)$ such that the following holds. 
For any $\delta\in(0,c_2)$, with probability at least $1-\delta$, for all 
$h\in C_\calS \cap \mathbb{S}^{md^2-1}$,
\begin{equation}\label{eq:L5-new}
\bigl|\langle Q h_1,\, Q h_2\rangle\bigr|
\;\le\;
c_1\frac{\!\mu}{\sqrt{m}}\left(1+\frac{\sqrt{|S|}}{\kappa_S}+\rev{\frac{1}{\scale}}\bigl\|(D a^{*})_{S^c}\bigr\|_1\right)
\!\left[
d\,\Delta_G
\;+\;
\beta^2 \rev{d}\,\sqrt{T}\,\log\!\Bigl(\frac{|\mathcal E|\,d}{\delta}\Bigr)
\right],
\end{equation}
where $\Delta_G$ as defined in \eqref{eq:DeltaG_def}
\end{lemma}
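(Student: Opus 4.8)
The plan is to turn $\langle Q h_1, Q h_2\rangle$ into a bilinear pairing that exposes both $\mu$ and $\Delta_G$, split off the mean of the design, and bound the deterministic and stochastic pieces separately -- the deterministic one by an entrywise Hölder bound against the cone constraint, and the stochastic one by a Hanson--Wright/Bernstein argument over the independent systems. Write $h^{(l)}\in\matR^{d^2}$ for the $l$-th block of $h$, $H^{(l)}:=\mathrm{mat}(h^{(l)})\in\matR^{d\times d}$, and $\bar H:=\frac{1}{m}\sum_{l}H^{(l)}$. Since $\Pitil=(\frac{1}{m}\ones_m\ones_m^\top)\otimes I_{d^2}$, we get $h_2=\ones_m\otimes\vect(\bar H)$ and $\mathrm{mat}((h_1)^{(l)})=H^{(l)}-\bar H=:H_1^{(l)}$. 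Using $(X_l^\top\otimes I_d)\vect(M)=\vect(M X_l)$ and the cyclicity of the trace,
\[
\langle Q h_1, Q h_2\rangle \;=\; \sum_{l=1}^m \big\langle H_1^{(l)},\, \bar H\,X_l X_l^\top\big\rangle .
\]
Two facts about $h\in\calC_\calS\cap\unitsph$ are used throughout: (i) $\norm{h_2}_2\le\norm{h}_2=1$ forces $\norm{\bar H}_F\le m^{-1/2}$; (ii) the cone inequality \eqref{eq:tran_cone_set} together with $\norm{(\Dtil h)_\calS}_1\le(\sqrt{\abs{\calS}}/\kappa_\calS)\norm{h}_2$ (Def.~\ref{def:invfac_compat}) gives $\norm{\Dtil h}_1\lesssim\Phi_\calS$ with $\Phi_\calS:=1+\sqrt{\abs{\calS}}/\kappa_\calS+\norm{(\Dtil a^*)_{\calS^c}}_1$.

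Next, write $X_lX_l^\top=G_l+(X_lX_l^\top-G_l)$, splitting the pairing as $\mathrm{I}+\mathrm{II}$ with $\mathrm{I}:=\sum_l\langle H_1^{(l)},\bar H G_l\rangle$ and $\mathrm{II}:=\sum_l\langle H_1^{(l)},\bar H(X_lX_l^\top-G_l)\rangle$. Expand each block of $h_1$ over the edges via the pseudoinverse of $D$: $H_1^{(l)}=\sum_{j\in[\abs{\calE}]}(D^\dagger)_{l,j}Z_j$, where $Z_j:=\mathrm{mat}\big((\Dtil h)^{(j)}\big)$ is the ($j$-th, oriented) edge difference $H^{(l)}-H^{(l')}$. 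Since $D\ones_m=0$, the columns of $D^\dagger$ are orthogonal to $\ones_m$, so $\sum_l(D^\dagger)_{l,j}=0$ and summing over $l$ re-centers automatically:
\[
\mathrm{I} \;=\; \sum_j\big\langle Z_j,\, \bar H N_j\big\rangle,\qquad N_j:=\sum_l(D^\dagger)_{l,j}(G_l-\overline{G}),
\]
\[
\mathrm{II} \;=\; \sum_j\big\langle Z_j,\, \bar H\widetilde{N}_j\big\rangle,\qquad \widetilde{N}_j:=\sum_l(D^\dagger)_{l,j}(X_lX_l^\top-G_l).
\]
Entrywise Hölder against $\sum_j\norm{Z_j}_{1,1}=\norm{\Dtil h}_1\lesssim\Phi_\calS$ reduces the task to bounding $\max_j\norm{\bar H N_j}_{\infty,\infty}$ and $\max_j\norm{\bar H\widetilde{N}_j}_{\infty,\infty}$. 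For the deterministic term, Cauchy--Schwarz over $l$ with $\norm{(D^\dagger)_{\cdot,j}}_2\le\mu$ and the definition of $\Delta_G$ gives $\abs{(N_j)_{b,a}}\le\mu\,\Delta_G$, hence $\norm{\bar H N_j}_{\infty,\infty}\le\norm{\bar H}_F\norm{N_j}_F\le m^{-1/2}d\,\mu\,\Delta_G$ and $\abs{\mathrm{I}}\lesssim\frac{\mu}{\sqrt m}\,\Phi_\calS\,d\,\Delta_G$.

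For $\mathrm{II}$ (the main work), the systems are independent across $l$, so for fixed $(j,a,b)$,
\[
(\bar H\widetilde{N}_j)_{b,a} \;=\; \sum_{l=1}^m (D^\dagger)_{l,j}\;e_b^\top\bar H(X_lX_l^\top-G_l)e_a
\]
is a weighted sum of independent centered terms. Using $\vect(X_l)=\Atil^*_l\,\vect([\eta_{l,1}\,\cdots\,\eta_{l,T}])$ with the argument a standard Gaussian in $\matR^{dT}$ and $\norm{\Atil^*_l}_2\le\beta$ (cf.\ \eqref{eq:Atil_l_def}), each $e_b^\top\bar H(X_lX_l^\top-G_l)e_a$ is a centered Gaussian quadratic form $g^\top M g-\Tr M$ whose matrix $M$ satisfies $\norm{M}_2\lesssim\beta^2\norm{\bar H_{b,:}}_2$ and $\rank(M)\le 2T$ (this last point yields $\sqrt T$ rather than $T$), hence $\norm{M}_F\lesssim\beta^2\sqrt{T}\,\norm{\bar H_{b,:}}_2\le\beta^2\sqrt{T}\,m^{-1/2}$; by Hanson--Wright its sub-exponential norm is of the same order. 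A Bernstein bound for the weighted sum over $l$ (the weight vector $(D^\dagger)_{\cdot,j}$ has $\ell_2$- and $\ell_\infty$-norm at most $\mu$), followed by a union bound over the $\abs{\calE}\,d^2$ triples $(j,a,b)$ at total failure probability $\delta$, gives
\[
\max_j\norm{\bar H\widetilde{N}_j}_{\infty,\infty}\;\lesssim\;\frac{\mu}{\sqrt m}\,\beta^2\sqrt{T}\,\log\!\Big(\frac{\abs{\calE}\,d}{\delta}\Big)\qquad\text{w.p. }\ge 1-\delta,
\]
so $\abs{\mathrm{II}}\lesssim\frac{\mu}{\sqrt m}\,\Phi_\calS\,\beta^2\sqrt{T}\,\log(\abs{\calE}d/\delta)$. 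Adding the two bounds yields \eqref{eq:L5-new}; since every per-$h$ step used only $\norm{\bar H}_F\le m^{-1/2}$, $\norm{\Dtil h}_1\lesssim\Phi_\calS$, and the single $h$-free event above, the estimate holds simultaneously over all $h\in\calC_\calS\cap\unitsph$.

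I expect the stochastic term to be the only real obstacle; everything else (the algebraic reduction, the edge expansion, and $\mathrm{I}$) is bookkeeping. The delicate points there are: pinning down the Hanson--Wright scales of $e_b^\top\bar H(X_lX_l^\top-G_l)e_a$ in terms of $\beta$ and $T$ despite the temporal correlation among the columns of $X_l$ (one must pass through the $\vect(X_l)=\Atil^*_l\vect(\cdot)$ representation and control the operator and Frobenius norms of the induced quadratic forms), and keeping the $\norm{\bar H_{b,:}}_2\le m^{-1/2}$ discount \emph{inside} the quadratic form so that the union bound over the $\abs{\calE}\,d^{O(1)}$ (edge, coordinate) pairs costs only a factor $\log(\abs{\calE}d/\delta)$ and no polynomial-in-$d$ factor in the $\beta^2\sqrt{T}$ term.
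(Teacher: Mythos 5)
Your algebraic reduction is sound and essentially parallels the paper's: your identity $\langle Qh_1,Qh_2\rangle=\sum_l\langle H_1^{(l)},\bar H X_lX_l^\top\rangle$, the edge expansion $H_1^{(l)}=\sum_j (D^\dagger)_{l,j}Z_j$, the recentering via $\ones_m^\top (D^\dagger)_{:,j}=0$, and the deterministic bound $|\mathrm{I}|\lesssim \frac{\mu}{\sqrt m}\Phi_\calS\, d\,\Delta_G$ are all correct, and your Hanson--Wright scales $\norm{M_l}_2\lesssim \beta^2\norm{\bar H_{b,:}}_2$, $\norm{M_l}_F\lesssim \beta^2\sqrt T\norm{\bar H_{b,:}}_2$ match the paper's $\norm{W}_2\le\beta^2\mu$, $\norm{W}_F\le\beta^2\sqrt T\mu$ (the paper folds the weighted sum over $l$ into a single block-diagonal quadratic form rather than applying Bernstein across $l$; both are fine for a fixed direction).

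The genuine gap is uniformity over $h$ in the stochastic term $\mathrm{II}$. Your concentration event is built for the random variables $(\bar H\widetilde N_j)_{b,a}=\sum_l (D^\dagger)_{l,j}\,e_b^\top\bar H(X_lX_l^\top-G_l)e_a$, and these depend on $h$ through $\bar H=\bar H(h)$, which ranges over a continuum as $h$ varies in $\calC_\calS\cap\unitsph$. So the event you union-bound over the $\abs{\calE}d^2$ triples $(j,a,b)$ is \emph{not} $h$-free, and the closing claim that "the single $h$-free event above" yields the bound simultaneously for all $h$ does not hold as stated. This is exactly the issue the paper's proof is structured to avoid: there the $h$-dependence is stripped off deterministically (via $\norm{(\Dtil h)_\calS}_1$, $\norm{(\Dtil a^*)_{\calS^c}}_1$, $\norm{h_2}_2\le 1$) before any probability is used, and the random quantity that gets concentrated, $\max_j\norm{U^\top Q^\top Q\,\Dtil^\dagger e_j}_2$ with the fixed matrix $U=\frac{\ones_m}{\sqrt m}\otimes I_{d^2}$, involves only fixed directions $u_i$, at the price of a dimensional factor from $\norm{\cdot}_2\le d\max_i|\cdot|$. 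To repair your argument you must pull $\bar H$ out of the probabilistic step, e.g.\ via $|(\bar H\widetilde N_j)_{b,a}|\le\norm{\bar H_{b,:}}_2\norm{\widetilde N_j e_a}_2\le m^{-1/2}\norm{\widetilde N_je_a}_2$ and then control the $h$-free quantity $\max_{j,a}\norm{\widetilde N_je_a}_2$ (costing a $\sqrt d$ or $d$ on the $\beta^2\sqrt T$ term), or run a net/chaining argument over $\{\bar H:\norm{\bar H}_F\le m^{-1/2}\}$ (costing an additive $d^2$ inside the log). Keeping $\norm{\bar H_{b,:}}_2$ "inside the quadratic form", which you flag as the feature that avoids any polynomial-in-$d$ factor, is precisely the step that breaks the simultaneity of the bound over $h$, so the claimed constant-in-$d$ fluctuation term is not established by your argument.
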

}

The proof of this lemma is given in  Section \ref{subsec:proof_RE_h1_h_2}.

\paragraph{Putting everything together.} 
Using Lemma's \ref{lem:RE_h_1}, \ref{lem:RE_h_2} and \ref{lem:cross-term} in \eqref{eq:re_ineq_prelim}, 
we get absolute constants $c_1,\dots,c_6>0$ such that, with probability at least $1-4e^{-c_1\sqrt v}-\delta$,
\begin{align*}
\inf_{h\in C_S\cap \mathbb{S}^{md^2-1}}\|Qh\|_2^2
\;\ge\;&
T\|h_1\|_2^2 \;-\; c_2 F_3 \;-\; c_3 G_3\sqrt v
\;\\
&+\; T\|h_2\|_2^2 \;-\; \frac{c_4}{m}\,\beta^2(\sqrt{mT}+d)\,d \;-\; \frac{c_5}{m}\,\beta^2(\sqrt{mT}+d)\,v \\
&\;-\; \frac{c_6\, \mu}{\sqrt{m}}\!\left(1+\frac{\sqrt{|S|}}{\kappa_S}+ \rev{\frac{1}{\scale}}\bigl\|(D a^{*})_{\calS^c}\bigr\|_1\right)
\left[
d\,\Delta_G
\;+\;
\beta^2\,\sqrt{T}\log\!\Bigl(\frac{|\mathcal E|\,d}{\delta}\Bigr)
\right].
\end{align*}
Using $G_3\le F_3$ and $\|h_1\|_2^2+\|h_2\|_2^2=1$, the bound simplifies to
\begin{equation*}
\begin{aligned}
\inf_{h\in C_S\cap \mathbb{S}^{md^2-1}}\|Qh\|_2^2
\;\ge\;
T \;&-\; c\!\Bigg(
F_3\sqrt v \;+\; \frac{\beta^2}{m}\,(\sqrt{mT}+d)\,(d+v)
\;\\
&+\frac{\mu}{\sqrt{m}}\;
\left(1+\frac{\sqrt{|S|}}{\kappa_S}+ \rev{\frac{1}{\scale}}\bigl\|(\Dtil a^{*})_{S^c}\bigr\|_1\right)
\left[
d\,\Delta_G
+
\beta^2\,\sqrt{T}\log\!\Bigl(\frac{|\mathcal E|\,d}{\delta}\Bigr)
\right]\!\Bigg).
\end{aligned}
\end{equation*}
Therefore, if the following (sufficient) conditions hold:
\begin{equation*}
\begin{aligned}
\text{(i)}\;& F_3\sqrt v \;\le\; \frac{T}{6c},\\[2pt]
\text{(ii)}\;& \displaystyle \frac{\beta^2}{m}\,(\sqrt{mT}+d)\,(d+v) \;\le\; \frac{T}{6c},\\[2pt]
\text{(iii)}\;& \displaystyle
\frac{\mu}{\sqrt{m}}\left(1+\frac{\sqrt{|S|}}{\kappa_S}+ \rev{\frac{1}{\scale}}\bigl\|(\Dtil a^{*})_{S^c}\bigr\|_1\right)
\left[
d\,\Delta_G
+
\beta^2\,\sqrt{T} \log\!\Bigl(\frac{|\mathcal E|\,d}{\delta}\Bigr)
\right]
\;\le\; \frac{T}{6c},
\end{aligned}
\end{equation*}
then $\inf_{h\in C_S\cap \mathbb{S}^{md^2-1}}\|Qh\|_2^2 \ge T/2$. 

\rev{Hence if $\est{a} - a^* \in \calCps \cap \set{\norm{h}_2 \geq \scale}$, then this implies $\norm{Q(\est{a} - a^*)}_2^2 \geq (T/2) \norm{\est{a} - a^*}_2^2$}. Plugging this into \eqref{eq:main_ineq_proof} yields
\[
\frac1m\|a^*-\widehat a\|_2^2
\;\le\;
\frac{2\lambda}{T}\left(1+3\,\frac{\sqrt{|S|}}{\kappa_S}\right)\|\widehat a-a^*\|_2
\;+\;
\frac{8\lambda}{T}\,\bigl\|(\Dtil a^{*})_{S^c}\bigr\|_1,
\]
%
%
%
\rev{which implies
\begin{equation*}
  \|a^*-\widehat a\|_2 \leq \frac{2m \lambda}{T} \Big(1 + 3\frac{\sqrt{\abs{\calS}}}{\kappa_{\calS}}\Big) + \sqrt{8 \frac{\lambda m}{T} \norm{\Dtilacomp}_1}.   
\end{equation*}
The above bound is seen to satisfy the error bound in Theorem \ref{thm:main_result_tv_pen} (which is larger than $\scale$) as well. This completes the proof.
}
\subsection{Proof of Lemma \ref{lem:l2_bd_term1}} \label{subsec:proof_l2_bd_term1}
We start by observing that
\begin{align} \label{eq:Qtop_eta}
    Q^\top \eta 
    = 
    \begin{bmatrix}
(X_{{1}}\otimes I_d) \eta_1 \\
\vdots\\
(X_m \otimes I_d) \eta_m 
\end{bmatrix}
\end{align}
where we recall $\eta_l = \vect(E_l)$, and the definitions of $X_l, E_l \in \matR^{d \times T}$ from Section \ref{subsec:setup_prelim}. Denoting \rev{$M_{l,t} := \sum_{s=1}^t \eta_{l,s+1} x_{l,s}^\top$}, note that $(X_l \otimes I_d) \eta_l = \vect(M_{l,T})$, and we have
\begin{align} \label{eq:pitil_lem_temp1}
    \norm{\Pitil Q^\top \eta}_2^2 = \frac{1}{m} \norm{\sum_{l=1}^m \vect(M_{l,T})}_2^2 = \frac{1}{m} \norm{\sum_{l=1}^m M_{l,T}}_F^2.
\end{align}
For any system $l \in [m]$, denote $\calF_{l,t} := \sigma(\eta_{l,1},\dots,\eta_{l,t})$ to be the sigma algebra at time $t \geq 1$, leading to a filtration $(\calF_{l,t})_{t=1}^{\infty}$. Since $M_{l,t} = M_{l,t-1} + \eta_{l,t+1} x_{l,t}^\top$, and $M_{l,t}$ is $\calF_{l,t+1}$-measurable, hence $\expec[M_{l,t} \vert \calF_{l,t}] = M_{l,t-1}$, and $(M_{l,t})_{t=1}^{T}$ is a martingale. 

Before deriving a high-probability upper bound for $\norm{\Pitil Q^\top \eta}_2^2$, let us examine what its expectation looks like. To this end, note that
\begin{equation*}
    \expec\left[\norm{M_{l,t}}_F^2 \vert \calF_{l,t} \right] = \norm{M_{l,t-1}}_F^2 + d\norm{x_{l,t}}_2^2.
\end{equation*}
Together with \eqref{eq:pitil_lem_temp1}, and the fact that $M_{l,T}, M_{l',T}$ are independent and centered, this implies 
\begin{align*}
  \expec\norm{\Pitil Q^\top \eta}_2^2 = \frac{1}{m} \sum_{l=1}^m \expec\left[\norm{M_{l,T}}_F^2\right] = \frac{d}{m}\sum_{l=1}^m \sum_{s=1}^T \expec\left[\norm{x_{l,s}}_2^2 \right] = \frac{d}{m}\sum_{l=1}^m \expec\left[\norm{\vect(X_l)}_2^2 \right] = \frac{d}{m}\sum_{l=1}^m\norm{\Atil^*_l}_F^2,
\end{align*}
where we recall $\Atil^*_l$ from \eqref{eq:Atil_l_def}.

Now to find an upper bound on $\norm{\Pitil Q^\top \eta}_2^2$, we first write
\begin{align*}
   \sum_{l=1}^m \vect(E_l X_l^\top) = \sum_{l=1}^m \sum_{s=1}^T \vect(\eta_{l,s+1} x_{l,s}^\top) = \sum_{s=1}^T\left(\sum_{l=1}^m x_{l,s} \otimes \eta_{l,s+1} \right) =: S_T.
\end{align*}
Clearly, $(S_{t})_{t=1}^{T}$ is a martingale and $\norm{\Pitil Q^\top \eta}_2^2 = \frac{1}{m}\norm{S_T}_2^2$. 
Denoting $V_T := \sum_{s=1}^T \sum_{l=1}^m (x_{l,s} x_{l,s}^\top) \otimes I_d$, we obtain for any $\Vbar \succ 0$ that
\begin{align} \label{eq:ptilbd_temp2}
    \frac{1}{m}\norm{S_T}_2^2 = \frac{1}{m} \norm{(V_T + \Vbar)^{1/2}(V_T + \Vbar)^{-1/2} S_T}_2^2 \leq \frac{1}{m} \norm{V_T + \Vbar}_2 \norm{(V_T + \Vbar)^{-1/2} S_T}_2^2.
\end{align}
The term $(V_T + \Vbar)^{-1/2} S_T$ is a self-normalized vector valued martingale. In a completely analogous manner to the proof of \cite[Proposition 1]{tyagi2024jointlearn}, which in turn follows the steps in the proof of \cite[Theorem 1]{abbasi11}, it is easy to show that (see Appendix \ref{appsec:self_norm_args} for details) 
\begin{align} \label{eq:pitilbd_selfnorm}
    \prob\left(\norm{(V_T + \Vbar)^{-1/2} S_T}_2^2 \leq 2\log\left(\frac{\det\left[(V_T + \Vbar)^{1/2} \right] \det\left[\Vbar^{-1/2} \right]}{\delta} \right) \right) \geq 1-\delta.
\end{align}
We will now bound $\norm{V_T}_2$, which together with \eqref{eq:ptilbd_temp2}, \eqref{eq:pitilbd_selfnorm}, and a suitable choice of $\Vbar$ will complete the proof.

To this end, note that
\begin{align} \label{eq:vT_bd_quad}
    \norm{V_T}_2 = \norm{\sum_{s=1}^T \sum_{l=1}^m (x_{l,s} x_{l,s}^\top)}_2 \leq \sum_{s=1}^T \sum_{l=1}^m \norm{x_{l,s}}_2^2 = \etatil^\top (\Atil^*)^\top \Atil^* \etatil
\end{align}
where $\etatil_l$ is formed by column-stacking $\eta_{l,1},\dots,\eta_{l,T}$; $\etatil$ is formed by column-stacking $\etatil_1,\dots, \etatil_m$, and $\Atil^* := \blkdiag(\Atil^*_1,\dots,\Atil^*_m)$. Invoking the tail-bound in  \cite[Theorem 2.1]{hsu2012tail} for random positive-semidefinite quadratic forms, and denoting $\Sigma := (\Atil^*)^\top \Atil^*$, we have for any $t > 0$,
\begin{align*} 
    \prob\left(\etatil^\top \Sigma \etatil \geq  \Tr(\Sigma) + 2\sqrt{\Tr(\Sigma^2) t} + 2\norm{\Sigma}_2 t \right)  \leq e^{-t}.
\end{align*}
Since $\norm{\Sigma}_2 \leq \Tr(\Sigma)$ and $\Tr(\Sigma^2) \leq \norm{\Sigma}_2\Tr(\Sigma) \leq (\Tr(\Sigma))^2$, 
we obtain for any $t \geq 1$ the simplified bound
\begin{align} \label{eq:rand_quad_tail_bd}
    \prob\left(\etatil^\top \Sigma \etatil \geq 5t \Tr(\Sigma)\right) \leq e^{-t}.
\end{align}
Using the fact
\begin{align*} 
   \Tr(\Sigma) = \sum_{l=1}^m \Tr((\Atil^*_l)^\top \Atil^*_l) = \sum_{l=1}^m \sum_{t=0}^{T-1} \Tr(\Gamma_t(A^*_l)) 
\end{align*}
in \eqref{eq:rand_quad_tail_bd}, and choosing $t = \log(1/\delta)$ for $\delta \in (0,e^{-1})$, we obtain from \eqref{eq:vT_bd_quad} that 
\begin{align*}
    V_T \preceq 5 \left(\sum_{l=1}^m \sum_{t=0}^{T-1} \Tr(\Gamma_t(A^*_l)) \right) \log(1/\delta) I_{d^2}.
\end{align*}
Applying this in \eqref{eq:pitilbd_selfnorm} and choosing $\Vbar = m I_{d^2}$, we then obtain from \eqref{eq:ptilbd_temp2} (with some minor simplifications) the stated error bound on $\norm{\Pitil Q^\top \eta}_2 = \frac{1}{\sqrt{m}}\norm{S_T}_2$.

\subsection{Proof of Lemma \ref{lem:linf_bd_term2}} \label{subsec:proof_linf_bd_term2}
Recall the expression of $D^{\dagger}$ from Definition \ref{def:invfac_compat}, we then have 
\begin{equation*}
    (D^{\dagger})^{\top} \otimes I_{d^2} = 
      \begin{bmatrix}
s_1^\top \otimes I_{d^2} \\
\vdots\\
s_{\abs{\calE}}^\top \otimes I_{d^2}
\end{bmatrix}
\end{equation*}
which together with \eqref{eq:Qtop_eta} implies
\begin{align*}
    \left[(D^{\dagger})^{\top} \otimes I_{d^2}\right] Q^\top \eta 
    =  \begin{bmatrix}
\sum_{l=1}^m (s_1)_l (X_l \otimes I_{d}) \eta_l \\
\vdots\\
\sum_{l=1}^m (s_{\abs{\calE}})_l (X_l \otimes I_{d}) \eta_l 
\end{bmatrix}.
\end{align*}
Hence we obtain 
\begin{align*}
   \norm{\left[(D^{\dagger})^{\top} \otimes I_{d^2} \right] Q^\top \eta }_{\infty} = \max_{i \in [\abs{\calE}]} \norm{\sum_{l=1}^m (s_i)_l (X_l \otimes I_{d}) \eta_l}_{\infty}. 
\end{align*}
For a given $w = (w_1,\dots,w_m)^{\top} \in \matR^m$ with $\norm{w}_2 \leq \invfac$, we will now bound $\norm{\sum_{l=1}^m w_l (X_l \otimes I_{d^2}) \eta_l}_{\infty}$, and then take a union bound over $\set{s_1,\dots,s_{\abs{\calE}}}$ to conclude.

To this end, denoting $$v = \sum_{l=1}^m w_l (X_l \otimes I_{d^2}) \eta_l = \sum_{s=1}^T \sum_{l=1}^m w_l (x_{l,s} \otimes \eta_{l,s+1}) \in \matR^{d^2},$$
we can consider $v$ to be formed by column-stacking the vectors $v_1,v_2,\dots,v_{d} \in \matR^{d^2}$ where $$v_i = \sum_{s=1}^T \sum_{l=1}^m w_l (x_{l,s})_i  \ \eta_{l,s+1} \quad \text{for $i =1,\dots, d$}.$$  
For a given $i,j \in [d]$, we will now first bound $\abs{(v_i)_j}$ with high probability, and then taken a union bound to bound $\norm{v}_{\infty}$. To this end, we start by writing
\begin{equation*}
    (v_i)_j = \underbrace{\sum_{s=1}^T \sum_{l=1}^m w_l (x_{l,s})_i  \ (\eta_{l,s+1})_j}_{:= S_T} \quad \text{ and } \quad (\vbar_i)_j:=  \sum_{s=1}^T \sum_{l=1}^m  w_l^2 (x_{l,s})_i^2 \ \geq 0.
\end{equation*}
Similar to Section \ref{subsec:proof_l2_bd_term1}, we can see that  $(S_t)_{t=1}^T$ is a martingale. By writing 
\begin{equation} \label{eq:vij_abs_tmp1}
\abs{(v_i)_j} = ((\vbar_i)_j + a)^{1/2} \abs{((\vbar_i)_j + a)^{-1/2} (v_i)_j}
\end{equation}
for any fixed $a > 0$, and noting that $((\vbar_i)_j + a)^{-1/2} (v_i)_j$ is a (scalar-valued) self-normalized martingale, we can show in a completely analogous manner to the proof of 
\cite[Theorem 1]{abbasi11} (as explained in Section \ref{subsec:proof_l2_bd_term1}) that for any $\delta \in (0,1)$, 
%
\begin{align} \label{eq:vij_selfnorm}
    &\prob\left(\abs{((\vbar_i)_j + a)^{-1/2} (v_i)_j}^2 \leq 2\log\left(\frac{((\vbar_i)_j + a)^{1/2}}{\delta a^{1/2}} \right) \right)  \geq 1-\delta.
\end{align}
It remains to bound $(\vbar_i)_j$. Recall $\Atil_l^*$ from \eqref{eq:Atil_l_def}. It will be useful to denote $\Atil^*_l(s)$ to be the $s$'th ``row-block'' of matrices of $\Atil_l^*$, i.e.,
\begin{align*}
   \Atil^*_l(s) := [(A^*_l)^{s-1} \ \cdots \ A^*_l \ I_d \ 0 \cdots 0]. 
\end{align*}
Moreover, denote $\Atil^*_l(s,i) := e_i^\top \Atil^*_l(s)$ to be the $i$'th row vector of $\Atil^*_l(s)$, for $i \in [d]$. Then we can write $(x_{l,s})_i = \Atil^*_l(s,i) \etatil_l$; recall $\etatil_l$ and $\etatil$ from Section \ref{subsec:proof_l2_bd_term1}. Using the expression for $(\vbar_i)_j$, this implies
\begin{align*}
    (\vbar_i)_j 
    &= \sum_{s=1}^T \sum_{l=1}^m  w_l^2 (x_{l,s})_i^2  \\
    &= \sum_{s=1}^T \sum_{l=1}^m \etatil_l^\top (w_l^2 \Atil^*_l(s,i) ^\top \Atil^*_l(s,i) ) \etatil_l \\
    &= \etatil^\top \left[\underbrace{\blkdiag\left(w_1^2 \sum_{s=1}^T (\Atil^*_1(s,i))^\top \Atil^*_1(s,i), \dots, w_m^2 \sum_{s=1}^T (\Atil^*_m(s,i))^\top \Atil^*_m(s,i) \right)}_{=:\Sigma} \right] \etatil.
\end{align*}
Since $\norm{w}_2^2 \leq \invfac^2$ by assumption,
\begin{align*}
 \implies  
 \Tr(\Sigma) = \sum_{l=1}^m w_l^2 \sum_{s=1}^T \Tr\left(\Atil^*_l(s,i))^\top \Atil^*_l(s,i) \right)
 \leq \invfac^2 \max_{\substack{l \in [m] \\ i \in [d]}} e_i^\top \left(\sum_{s=1}^T \Gamma_{s-1}(A^*_l) \right) e_i.
\end{align*}
Then invoking the tail bound in \eqref{eq:rand_quad_tail_bd} with $t=\log(1/\delta)$ for $\delta \in (0,e^{-1})$, we obtain with probability at least $1-\delta$,
\begin{align} \label{eq:vbarij_temp1}
     (\vbar_i)_j \leq 5\invfac^2  \left[\max_{\substack{l \in [m] \\ i \in [d]}} e_i^\top \left(\sum_{s=1}^T \Gamma_{s-1}(A^*_l) \right) e_i \right] \log(1/\delta) =: \zeta_2'(m,T,\delta). 
\end{align}
Using \eqref{eq:vbarij_temp1} and \eqref{eq:vij_selfnorm}, and choosing $a = \zeta_2'(m,T,\delta)$, we have thus far shown the following (via a union bound over $[d^2]$). For any given $w\in \matR^m$ with $\norm{w}_2 \leq \invfac$, it holds with probability at least $1-2\delta$ that for all $i,j \in [d]$, 
\begin{align} \label{eq:vij_bds_temp2}
    \abs{((\vbar_i)_j + a)^{-1/2} (v_i)_j}^2 \leq 2\log\left(\frac{\sqrt{2}d^2}{\delta} \right) \ \text{ and } \ ((\vbar_i)_j + a)^{1/2} \leq \sqrt{2 \zeta_2'\Big(m,T,\frac{\delta}{d^2} \Big)}. 
\end{align}
Using \eqref{eq:vij_bds_temp2} and \eqref{eq:vij_abs_tmp1}, this implies that for any given $w\in \matR^m$ with $\norm{w}_2 \leq \invfac$,  it holds with probability at least $1-2\delta$ that
\begin{align*}
    \norm{\sum_{l=1}^m w_l (X_l \otimes I_{d^2}) \eta_l}_{\infty} \leq {2} \zeta_2'^{1/2}\Big(m,T,\frac{\delta}{d^2} \Big) \log^{1/2}(\sqrt{2}{d^2}/\delta).
\end{align*}
%
The statement of the lemma now follows by taking a union bound over $\set{s_1,\dots,s_{\abs{\calE}}}$.

%
%
\subsection{Proof of Lemma \ref{lem:RE_h_1}} \label{subsec:proof_RE_h1}
Denoting $\Qtil = Q (D^\dagger \otimes I_{d^2}) = Q \Dtil^\dagger$, we have $\norm{Q h_1}_2^2 = \norm{\Qtil \Dtil h}_2^2$. Recall from Definition \ref{def:invfac_compat} that $D^{\dagger} = [s_1 \ s_2 \  \cdots \ s_{\abs{\calE}}]$ where $s_i = (s_{i,1}, \dots,s_{i,m})^\top \in \matR^m$  for $i=1,\dots,\abs{\calE}$. Let us denote 
\begin{align*}
    (D^{\dagger})^\top = \left[s'_1 \ s'_2 \  \cdots \ s'_{m} \right] \in \matR^{\abs{\calE} \times m} \quad \text{ where } \quad s'_l  = (s'_{1,l}, \dots, s'_{\abs{\calE}, l})^\top.
\end{align*}
Then, it is not difficult to verify that
\begin{align*}
    \Qtil = \begin{bmatrix}
           {s'_1}^\top \otimes (X_1^\top \otimes I_d) \\
           {s'_2}^\top \otimes (X_2^\top \otimes I_d) \\
           \vdots \\
           {s'_m}^\top \otimes (X_m^\top \otimes I_d)
         \end{bmatrix}
\end{align*}
where we observe that $\Qtil$ consists of independent ``blocks'' of rows, with the entries within a block being dependent.

It will be useful to denote $u(h) = \Dtil h$, and the matrix version of $u(h)$ by $U(h)$, where $u(h) = \vect(U(h))$ and  
\begin{align*}
    U(h) \ = \ \left[U_1(h) \ U_2(h) \ \cdots \ U_{\abs{\calE}}(h) \right] \in \matR^{d \times (\abs{\calE} d)}.
\end{align*}
Then we can write $\norm{Q h_1}_2^2$ as 
\begin{align*}
   \norm{Q h_1}_2^2 = \norm{\Qtil u(h)}_2^2 
    &= \sum_{l=1}^m \norm{\left((s'_l \otimes X_l)^\top \otimes I_d \right) u(h)}_2^2 \\
    &= \sum_{l=1}^m \norm{U(h) (s_l' \otimes X_l)}_F^2 \\
    &=  \sum_{l=1}^m \norm{ \Big(\sum_{i=1}^{\abs{\calE}} s_{i,l} U_i(h) \Big)  X_l}_F^2 \\
    &= \sum_{l=1}^m \norm{\Big[\underbrace{I_T \otimes \Big(\sum_{i=1}^{\abs{\calE}} s_{i,l} U_i(h) \Big)}_{=: P_l(u(h))} \Big] \vect(X_l)}_2^2 
    = \sum_{l=1}^m \norm{P_l(u(h)) \vect(X_l)}_2^2.
\end{align*}
%
%
%
%
\rev{Now recall that 
$\vect(X_l) = \Atil_l^* \etatil_l$ where $\etatil_l, \etatil$ are as defined earlier in Section \ref{subsec:proof_l2_bd_term1}, and $\Atil_l^*$ is defined in \eqref{eq:Atil_l_def}}. This implies
\begin{align} \label{eq:quad_form_rel}
    \norm{Q h_1}_2^2 = \sum_{l=1}^m \norm{P_l(u(h)) \Atil_l^* \rev{\etatil_l}}_2^2 = \norm{P(u(h)) \rev{\etatil}}_2^2
\end{align}
where we denote
\begin{align} \label{eq:Dtilcone_to_Pmat}
    P(u(h)) := \text{blkdiag}\left[P_1(u(h)) \Atil_1^*, \cdots,P_m(u(h)) \Atil_m^* \right].
\end{align}

So our focus is now on lower bounding $\norm{P(u(h)) \rev{\etatil}}_2^2$ uniformly over $h \in \calC_{\calS} \cap \unitsph$. More specifically, denoting the set of matrices
\begin{align*}
    \calP = \set{P(u(h)): \ h \in \calC_{\calS} \cap \unitsph}
\end{align*}
we will control the quantity
\begin{align} \label{eq:chaos_trans_form}
    \sup_{h \in \calC_{\calS} \cap \unitsph} \abs{\norm{P(u(h)) \rev{\etatil}}_2^2 - \expec \norm{P(u(h)) \rev{\etatil}}_2^2} 
\end{align}
by invoking the concentration bound of \cite{krahmer14}, recalled as Theorem \ref{thm:krahmer_chaos} in Appendix \ref{appsec:sup_chaos_krahmer}. This theorem captures the complexity of the set $\calP$ via Talagrand's $\gamma_2$ functionals; their definition and properties are recalled in Appendix \ref{appsec:tal_prelim}. Thereafter, we will bound the $\gamma_2$ functionals of $\calP$ in terms of the $\gamma_2$ functionals of the set 
$$\Dtil (\calC_{\calS} \cap \unitsph) = \set{\Dtil h: h \in \calC_{\calS} \cap \unitsph}$$ 
using the Lipschitz property outlined in Appendix \ref{appsec:tal_prelim}. Finally, the $\gamma_2$ functionals of $\Dtil (\calC_{\calS} \cap \unitsph)$ will be bounded in terms of the Gaussian width of this set, namely $w(\Dtil (\calC_{\calS} \cap \unitsph))$, which will then be controlled \rev{in a standard manner}. 
This, along with some simplifications will conclude the proof.

With the above strategy in mind, let us first establish that 
\begin{align}\label{eq:lem1_expec_lowbd}
    \expec \norm{P(u(h)) \rev{\etatil}}_2^2 \geq T \norm{\Dtil^\dagger \Dtil h}_2^2 = T \norm{h_1}_2^2, \quad \forall \ h \in \calC_{\calS} \cap \unitsph.
\end{align}
Indeed, first note that
\begin{align*}
    \expec \norm{P(u(h)) \rev{\etatil}}_2^2 = \norm{P(u(h))}_F^2 = \sum_{l=1}^m \norm{P_l(u(h)) \Atil_l^*}_F^2.
\end{align*}
Recall the expressions for $P_l(u(h))$ and $\Atil_l^*$, we then see that 
\begin{align*}
    \norm{P_l(u(h)) \Atil_l^*}_F^2 \geq T \norm{\sum_{i=1}^{\abs{\calE}} s_{i,l} u_i(h)}_F^2 = T \norm{U(h) (s'_l \otimes I_d)}_F^2.
\end{align*}
This in turn implies
\begin{align}
    \sum_{l=1}^m \norm{P_l(u(h)) \Atil_l^*}_F^2 
    &\geq T \sum_{l=1}^m \norm{U(h) (s'_l \otimes I_d)}_F^2 \nonumber \\
    &= T \norm{U(h) [s'_1 \otimes I_d \ \cdots\cdots \ s'_m \otimes I_d ] }_F^2 \nonumber \\
    &= T \norm{U(h)((D^\dagger)^\top \otimes I_d)}_F^2 \nonumber \\
    &= T \norm{U(h) (D^\dagger \otimes I_d)^\top}_F^2 = T\norm{(D^\dagger \otimes I_{d^2}) u(h)}_2^2 = T \norm{\Dtil^\dagger \Dtil h}_2^2, \label{eq:tmp1_proof_h1_RE}
\end{align}
thus establishing \eqref{eq:lem1_expec_lowbd}. Observe that in the penultimate equality above, we used the fact $$\vect(U(h) (D^\dagger \otimes I_d)^\top) = ((D^\dagger \otimes I_d) \otimes I_d) \vect(U(h)) = (D^\dagger \otimes I_{d^2}) u(h).$$

\paragraph{Bounding $d_2(\calP)$ and $d_F(\calP)$.} To use Theorem \ref{thm:krahmer_chaos}, we need to bound the terms $d_2(\calP)$ and $d_F(\calP)$. To this end, we have for any $h \in  \calC_{\calS} \cap \unitsph$ that
\begin{align*}
    \norm{P(u(h))}_F^2 = \sum_{l=1}^m \norm{P_l(u(h)) \Atil_l^*}_F^2 \leq \beta^2 \sum_{l=1}^m \norm{P_l(u(h))}_F^2 \leq \beta^2 T, \\
\end{align*}
and also, 
%
 %
%
\begin{align*}
 \norm{P(u(h))}_2 
    = \max_{l=1,\dots,m} \norm{P_l(u(h)) \Atil_l^*}_2 
    \leq \beta \max_{l} \norm{P_l(u(h))}_2 
    &= \beta \max_{l \in [m]} \norm{U(h) (s'_l \otimes I_d)}_2 \\ 
    &\leq \beta \sqrt{\sum_{l=1}^m\norm{U(h) (s'_l \otimes I_d)}_2^2} \\
    &\leq \beta \sqrt{\sum_{l=1}^m\norm{U(h) (s'_l \otimes I_d)}_F^2} \\ 
    &= \beta \norm{U(h) ((D^\dagger)^\top \otimes I_d)}_F \\
    &= \beta \norm{\Dtil^\dagger \Dtil h}_2 \tag{as seen in \eqref{eq:tmp1_proof_h1_RE}} \\
    &\leq \beta.
\end{align*}
Hence we have shown that
\begin{equation} \label{eq:rad_bounds}
    d_2(\calP) \leq \beta \text{ and }
d_F(\calP) \leq \beta \sqrt{T}.
\end{equation}

\paragraph{Bounding $\gamma_2(\calP, \norm{\cdot}_2)$.} For any $h, h' \in \calC_{\calS} \cap \unitsph$ we have
\begin{align*}
    \norm{P(u(h)) - P(u(h'))}_2 &\leq \beta \max_l\norm{P_l(u(h)) - P_l(u(h'))}_2 \\
    &\leq \beta \max_l \norm{(U(h) - U(h')) (s'_l \otimes I_d)}_2 \\ 
    &\leq \beta \invfac' \norm{U(h) - U(h')}_2 \\
    &\leq \beta \invfac' \norm{U(h) - U(h')}_F \\
    &\leq \beta \invfac' \norm{u(h) - u(h')}_2.
\end{align*}
%
%
%
Since the mapping $\Dtil(\calC_{\calS} \cap \unitsph) \mapsto \calP$ via \eqref{eq:Dtilcone_to_Pmat} is onto, thus invoking the Lipschitz property of $\gamma_{\alpha}$ functionals (see Appendix \ref{appsec:tal_prelim}), we obtain in conjunction with Talagrand's majorizing measure theorem (see \eqref{eq:talag_maj_meas_thm}) that for some constants $c, c_1 > 0$,
\begin{align}
    \gamma_2(\calP, \norm{\cdot}_2) 
    \ &\leq \ c \beta \invfac'  \gamma_2\Big(\Dtil(\calC_{\calS} \cap \unitsph ), \norm{\cdot}_2 \Big) \ \leq \ c_1 \beta \invfac' w\Big(\Dtil(\calC_{\calS} \cap \unitsph )\Big). \label{eq:gamma_2_bd1_tmp} 
    %
\end{align}
\paragraph{Bounding the Gaussian width of $\Dtil(\calC_{\calS} \cap \unitsph)$.} We will show this using the following set inclusion. The proof is outlined in Appendix \ref{appsec:proof_lem_set_incl}. 
%
%
\begin{proposition} \label{prop:set_incl_basu}
    For $\rev{z} > 0$, and a positive integer $n,$ recall that $\mathbb{B}^{n}_p(\rev{z})$ denotes the $\ell_p$ ball of radius $\rev{z}$ in $\matR^{n}$. Also recall the  compatibility factor $\kappa_{\calS}$ from Definition \ref{def:invfac_compat}. Then, it holds that
    \begin{align*}
        \Dtil\Big(\calC_{\calS} \cap \unitsph \Big) 
        \subseteq 
        \mathbb{B}^{\abs{\calE}d^2}_1\Big(4\frac{\sqrt{\abs{\calS}}}{\kappa_{\calS}} + \rev{\frac{4}{\scale}}\norm{\Dtilacomp}_1 + 1 \Big). 
    \end{align*}
\end{proposition}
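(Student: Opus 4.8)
The plan is to prove this purely by unpacking the two definitions involved — the transfer cone $\calC_{\calS}$ in \eqref{eq:tran_cone_set} and the compatibility factor $\kappa_{\calS}$ in Definition \ref{def:invfac_compat} — and chaining them. Fix an arbitrary $h \in \calC_{\calS} \cap \unitsph$, so that $\norm{h}_2 = 1$. First I would split the total-variation norm along $\calS$ and its complement, writing $\norm{\Dtil h}_1 = \norm{(\Dtil h)_{\calS}}_1 + \norm{(\Dtil h)_{\calS^c}}_1$, and then use the defining inequality of $\calC_{\calS}$ to control the complement block: since $\norm{h}_2 = 1$,
\[
\norm{(\Dtil h)_{\calS^c}}_1 \;\le\; 1 + 3\norm{(\Dtil h)_{\calS}}_1 + 4\norm{\Dtilacomp}_1 .
\]
Adding $\norm{(\Dtil h)_{\calS}}_1$ to both sides gives $\norm{\Dtil h}_1 \le 4\norm{(\Dtil h)_{\calS}}_1 + 4\norm{\Dtilacomp}_1 + 1$.

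It then remains only to bound $\norm{(\Dtil h)_{\calS}}_1$, and this is exactly what the compatibility factor is built to do: by Definition \ref{def:invfac_compat}, for every $\theta \in \matR^{md^2}$ one has $\norm{(\Dtil\theta)_{\calS}}_1 \le \frac{\sqrt{\abs{\calS}}}{\kappa_{\calS}}\norm{\theta}_2$ when $\calS \neq \emptyset$, and when $\calS = \emptyset$ the restriction is the empty vector so the term vanishes — consistent with the convention $\kappa_{\emptyset} = 1$ and $\sqrt{\abs{\calS}} = 0$. Applying this with $\theta = h$ and $\norm{h}_2 = 1$ yields $\norm{(\Dtil h)_{\calS}}_1 \le \sqrt{\abs{\calS}}/\kappa_{\calS}$. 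Substituting back produces
\[
\norm{\Dtil h}_1 \;\le\; 4\frac{\sqrt{\abs{\calS}}}{\kappa_{\calS}} + 4\norm{\Dtilacomp}_1 + 1,
\]
i.e.\ $\Dtil h$ lies in the $\ell_1$ ball of the stated radius; since $h$ was arbitrary, the set inclusion follows.

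I do not anticipate a genuine obstacle here: the argument is a two-line consequence of the definitions, and the reference to \cite[Lemma F.1]{Basu15} mentioned in the surrounding text pertains to the \emph{next} step (bounding the Gaussian width of an $\ell_1$ ball), not to this inclusion itself. The only points requiring care are purely bookkeeping: tracking the constants $4$ and $1$ emerging from the cone inequality, and handling the degenerate case $\calS = \emptyset$ consistently with $\kappa_{\emptyset} = 1$, in which case the bound correctly collapses to $\norm{\Dtil h}_1 \le 4\norm{\Dtil a^*}_1 + 1$.
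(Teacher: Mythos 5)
Your proof is correct and follows exactly the paper's argument: split $\norm{\Dtil h}_1$ over $\calS$ and $\calS^c$, apply the defining inequality of $\calC_{\calS}$ with $\norm{h}_2=1$, and bound $\norm{(\Dtil h)_{\calS}}_1 \le \sqrt{\abs{\calS}}/\kappa_{\calS}$ via Definition \ref{def:invfac_compat}. Your extra remark on the degenerate case $\calS=\emptyset$ is consistent with the conventions and harmless.
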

%
%
%
It then follows from Proposition 
\ref{prop:set_incl_basu} along with standard properties of the Gaussian width (see e.g., \cite[Proposition 7.5.2]{HDPbook}), that 
\begin{align*}
    w\left(
\Dtil \Big(\calC_{\calS} \cap \unitsph \Big)\right) 
    &\leq w\Big(\mathbb{B}^{\abs{\calE}d^2}_1\Big(4\frac{\sqrt{\abs{\calS}}}{\kappa_{\calS}} + \rev{\frac{4}{\scale}}\norm{\Dtilacomp}_1 + 1 \Big) \Big)  \\
    &\leq C \Big(4\frac{\sqrt{\abs{\calS}}}{\kappa_{\calS}} + \rev{\frac{4}{\scale}}\norm{\Dtilacomp}_1 + 1 \Big) \sqrt{\log(d^2\abs{\calE})}
   \tag{see \cite[Example 7.5.8]{HDPbook}}  
\end{align*}
for some constant $C \geq 1$. 
Using this together with \eqref{eq:gamma_2_bd1_tmp}, we have hence shown that
\begin{align}
    \gamma_2(\calP, \norm{\cdot}_2) 
     &\leq \ c_2 \beta \invfac' \Big(4\frac{\sqrt{\abs{\calS}}}{\kappa_{\calS}} + \rev{\frac{4}{\scale}}\norm{\Dtilacomp}_1 + 1 \Big) \sqrt{\log(d^2\abs{\calE})}, \label{eq:gamma_2_bd1} 
    %
\end{align}
for a constant $c_2 \geq 1$. 

\paragraph{Putting it together.} It will be useful to bound the terms $F,G$ and $H$ stated in Theorem \ref{thm:krahmer_chaos} using \eqref{eq:gamma_2_bd1} and \eqref{eq:rad_bounds}. To this end, we obtain for a suitably large constant $C_1 \geq 1$ that
\begin{align*}
H &\leq \beta^2  =: \Hbar, \\
G &\leq C_1 \beta^2 \left(\invfac'  \Big(4\frac{\sqrt{\abs{\calS}}}{\kappa_{\calS}} + \rev{\frac{4}{\scale}}\norm{\Dtilacomp}_1 + 1 \Big) \sqrt{\log(d^2\abs{\calE})} + \sqrt{T} \right) =: \Gbar, \\
F &\leq  C_1 \beta^2 \Bigg((\invfac')^2  \Big(4\frac{\sqrt{\abs{\calS}}}{\kappa_{\calS}} + \rev{\frac{4}{\scale}}\norm{\Dtilacomp}_1 + 1 \Big)^2 \log(d^2\abs{\calE}) \\
&+ 
\invfac' \sqrt{T}  \Big(4\frac{\sqrt{\abs{\calS}}}{\kappa_{\calS}} + \rev{\frac{4}{\scale}}\norm{\Dtilacomp}_1 + 1 \Big) \sqrt{\log(d^2\abs{\calE})} + \sqrt{T} \Bigg) =: \Fbar.
\end{align*}
Note that $\Fbar \geq \Gbar \geq \Hbar$. Now applying Theorem \ref{thm:krahmer_chaos} to the quantity \eqref{eq:chaos_trans_form}, and recalling the relation \eqref{eq:quad_form_rel} along with the bound in \eqref{eq:lem1_expec_lowbd}, we have thus far shown that there exist constants $c_1, c_2$ such that for any $t > 0$,  
\begin{align*}
    \prob &\left(\forall h \in \calC_{\calS} \cap \unitsph : \ \norm{Q h_1}_2^2 \geq T \norm{h_1}_2^2 - c_1 \Fbar  - t \right) 
    \rev{\geq 1 - } 2\exp\Big(-c_2 \min\set{\frac{t^2}{\Gbar^2}, \frac{t}{\Hbar}} \Big).
\end{align*}
We now set $t = \Gbar \sqrt{v}$ for any $v \geq 1$, $t = \Gbar \sqrt v$ for any $v \geq 1$, and simply note that 
\begin{align*}
    \min\set{\frac{t^2}{\Gbar^2}, \frac{t}{\Hbar}} \geq  \sqrt{v} 
\end{align*}
since $\Gbar \geq \Hbar$, as mentioned earlier. This leads to the statement of the lemma.
\subsection{Proof of Lemma \ref{lem:RE_h_2}} \label{subsec:proof_RE_h2}
We have
$$
h_{2}=\Pitil h=\left[\left(\frac{1}{m} \ones_{m} \ones_{m}^{\top}\right) \otimes I_{d^{2}}\right] h
$$
and $\left\|Q h_{2}\right\|_{2}^{2}=\|Q \Pitil h\|_{2}^{2}$.
Plugging $$Q= \operatorname{blkdiag} \left(\underbrace{X_{l}^{\top} }_{T \times d}\otimes I_{d}\right)_{l=1}^{m}\quad \text{and}
\quad \Pitil=\frac{1}{m}\left(\ones_{m} \otimes I_{d^{2}}\right)\left(\ones_{m} \otimes I_{d^{2}}\right)^{\top}
$$
we compute
$$
Q\Pitil h= \left(\frac{1}{\sqrt{m}} Q\left (\ones_{m}\otimes I_{d^{2}}\right )\right) \underbrace{\left(\frac{\ones_{m}^{\top}}{\sqrt{m}} \otimes I_{d^{2}}\right) h}_{=:u(h)}.
$$
Also note that
$$
\begin{aligned}
& h_{2}=\Pitil h=\left(\frac{\ones_{m}}{\sqrt{m}} \otimes I_{d^{2}}\right) \underbrace{u(h)}_{d^{2} \times 1}=\frac{1}{\sqrt{m}}\left[\begin{array}{c}
I_{d^{2}} \\
\vdots \\
I_{d^{2}}
\end{array}\right] u(h)=\frac{1}{\sqrt{m}}\left[\begin{array}{c}
u(h) \\
\vdots \\
u(h)
\end{array}\right]_{m d^{2} \times 1}
\end{aligned}
$$
which implies
$\left\|h_{2}\right\|_{2}^{2}=\|u(h)\|_{2}^{2}$.
So, we have
$$
\begin{aligned}
& Q \Pitil h=\left[\frac{1}{\sqrt{m}} Q\left(\ones_{m} \otimes I_{d^{2}}\right)\right] u(h) =\frac{1}{\sqrt{m}}\left[\begin{array}{c}
X_{1}^{\top} \otimes I_{d} \\
\vdots \\
X_{m}^{\top} \otimes I_{d}
\end{array}\right] u(h) \\
& \Rightarrow\|Q \Pitil h\|_{2}^{2}=\frac{1}{m} \sum_{l=1}^{m}\left\|\left(X_{l}^{\top} \otimes I_{d}\right) u(h)\right\|_{2}^{2}.
\end{aligned}
$$
Now, denote $U(h) \in \mathbb{R}^{d\times d }$ to be matrix version of $u(h) \in \mathbb{R}^{d^{2} \times 1}$, so that $u(h)=\operatorname{vec}(U(h))$.
Then we have,
$$
\begin{aligned}
\left\|\left(X_{l}^{\top} \otimes I_{d}\right) u(h)\right\|_{2}^{2} & =\left\|\underbrace{U(h)}_{d \times d} X_{l}\right\|_{F}^{2} =\left \|\left(I_{T} \otimes U(h)\right) \underbrace{\operatorname{vec}\left(X_{l}\right)}_{d T\times l}\right \|_{2}^{2} \\
& =\left\|\left(I_{T} \otimes U(h)\right) \Atil_{l}^{*} \rev{\etatil_{l}}\right\|_{2}^{2} .
\end{aligned}
$$
where we use that $\operatorname{vec}\left(X_{l}\right)=\Atil_{l}^{*} \rev{\etatil_{l}}$.
So, 
$$
\|Q \Pitil h\|_{2}^{2}=\frac{1}{m} \sum_{l=1}^{m}\left\|\left(I_{T} \otimes U(h)\right) \Atil_{l}^{*} \rev{\etatil_{l}}\right\|_{2}^{2}=\frac{1}{m}\left \Vert P(U(h)) \rev{\etatil}\right \Vert^{2}_{2}
$$
where $P(U(h)):=\operatorname{blkdiag}\left (\left(I_{T} \otimes U(h) \right)\Atil_{l}^{*}\right )_{l=1}^{m}$.
We define $$\calP:=\left\{P(U(h)): h \in \calC_{\calS} \cap \unitsph \right\}.$$
We want to control
$$
M:=\frac{1}{m} \sup _{h \in \calC_{\calS} \cap \unitsph}\abs{\|P(U(h)) \rev{\etatil}\|_{2}^{2}-\mathbb{E}\|P(U(h)) \rev{\etatil}\|_{2}^{2}}
$$
for which we will use Theorem \ref{thm:krahmer_chaos}.
\paragraph{Controlling $M$.}
We start by lower bounding $\mathbb{E}\|P(U(h)) \rev{\etatil}\|_{2}^{2}$. We compute
$$
\begin{aligned}
	\mathbb{E}\|P(U(h)) \rev{\etatil}\|_{2}^{2}&=\|P(U(h))\|_{F}^{2}
 =\sum_{l=1}^{m}\left\|\left(I_{T} \otimes U(h)\right) \Atil_{l}^{*}\right\|_{F}^{2} \\
& \geqslant Tm\|U(h)\|_{F}^{2} \quad\binom{\text { similar calculation }}{\text { as done for } h_{1} \text { term }} \\
& =Tm\|u(h)\|_{2}^{2}=Tm\left\|h_{2}\right\|_{2}^{2} .
\end{aligned}
$$
where we used $\|u(h)\|_{2}^{2}=\left\|h_{2}\right\|_{2}^{2}$ as shown earlier.
\begin{enumerate}
	\item [2.1] \emph{ Bounding $d_{2}(\calP)$ and $d_{F}(\calP)$.}
	For all $h \in \calC_{\calS} \cap \unitsph$ 
	$$
	\begin{aligned}
		\|P(U(h))\|_{F}^{2}&=\sum_{l=1}^{m}\|(I_{T} \otimes U(h)) \Atil_{l}^{*}\|_{F}^{2}
		 \leqslant \beta^{2} m\left\|I_{T} \otimes U(h)\right\|_{F}^{2} \\
		& =\beta^{2} m T\|U(h)\|_{F}^{2}  =\beta^{2} m T\left\|h_{2}\right\|_{2}^{2} \leqslant \beta^{2}m T
	\end{aligned}
	$$
On the other hand, 	
	$$
		\begin{aligned}
\|P(U(h))\|_{2}&=\max _{l}\left\|\left(I_{\top} \otimes U(h)\right) \Atil_{l}^{*}\right\|_{2}	\leqslant \beta\left\|I_{T} \otimes U(h)\right\|_{2}\\
&	=\beta\|U(h)\|_{2} \leqslant \beta\left\|h_{2}\right\|_{2} \leqslant \beta
\end{aligned}
	$$
		So, we get
        \begin{equation}\label{lm:RE_h1_eq3}
			d_{F}(\calP) \leqslant \beta \sqrt{m T},\quad d_{2}(\calP) \leqslant \beta.
		\end{equation}
		\item [2.2] \emph{Bounding $\gamma_{2}\left(\calP,\|\cdot\|_{2}\right)$}.
		We have that
		$$ 
		\begin{aligned}
		 \left\|P(U(h))-P\left(U\left(h^{\prime}\right)\right)\right\|_{2} 
			& =\max _{l}\left\|\left(\left[I_{T} \otimes U(h)\right]-\left[I_{T} \otimes U\left(h^{\prime}\right)\right]\right) \Atil_{l}^{*}\right\|_{2} \\
			& \leq \beta\left\|I_{T} \otimes\left(U(h)-U\left(h^{\prime}\right)\right)\right\|_{2} \\
			& =\beta\left\|U(h)-U\left(h^{\prime}\right)\right\|_{2} \\ 
            &\leq \beta\left\|U(h)-U\left(h^{\prime}\right)\right\|_{F} \\
			& =\beta\left\|u(h)-u\left(h^{\prime}\right)\right\|_{2}.
		\end{aligned}
		$$	
		Denote as before the set	
		$$
		\calU:=\left\{u(h)=\left(\frac{\ones_{m}^{\top}}{\sqrt{m}} \otimes I_{d^{2}}\right) h: h \in \calC_{\calS} \cap \unitsph\right\} \subset \mathbb{R}^{d^{2}}.
		$$
		Since the map from $\calU$ to $\calP$ is onto, we have that	
		$$
		\gamma_{2}\left(\calP,\|\cdot\|_{2}\right) \leqslant c \beta \gamma_{2}\left(\calU,\|\cdot\|_{2}\right).
		$$
			Now we will bound $\gamma_{2}\left(\calU,\|\cdot\|_{2}\right)$ by showing a set-inclusion type lemma as before. The difference is that now $\calU$ contains elements of the form	
		$$
		\left(\frac{\ones_{m}^{\top}}{\sqrt{m}} \otimes I_{d^{2}}\right) h.
		$$
		
		\textbf{Set inclusion}.
		Denote
		$$\calU^{\prime}:=\left\{u(h)=\left(\frac{\ones_{m}^{\top}}{\sqrt{m}} \otimes I_{d^{2}}\right) h: h \in \calC_{\calS} \cap B_{2}^{m d^{2}}(1)\right\} \subset \mathbb{R}^{d^{2}}.$$
		Then it's easy to see that $\calU \subset \calU^{\prime}$.
		Now for any $h \in \calC_{\calS} \cap B_{2}^{m d^{2}}(1)$, we have 
		\begin{equation}\label{lm:RE_h1_eq1}
				\begin{aligned}
					\|u(h)\|_{1}&=\left\|\left[\frac{\ones_{m}^{\top}}{\sqrt{m}} \otimes I_{d^{2}}\right] h\right\|_{1}
					=\frac{1}{\sqrt{m}}\left \|\left [\underbrace{I_{d^{2}} \ I_{d^{2}} \ldots . I_{d^{2}}}_{m \text { times }}\right ] h\right \|_{1}\\
					&\leqslant \frac{1}{\sqrt{m}}\|h\|_{1} \leqslant d\|h\|_{2} \leqslant d, 
				\end{aligned}	
		\end{equation}
	 and also
	 \begin{equation}\label{lm:RE_h1_eq2}
	 		\quad\|u(h)\|_{2}=\left\|\left[\frac{\ones_{m}^{\top}}{\sqrt{m}} \otimes I_{d^{2}}\right] h\right\|_{2}
	 		\leqslant \underbrace{\left \|\frac{\ones_{m}^{\top}}{\sqrt{m}} \otimes I_{d^{2}}\right \|_{2}}_{=1}
	 		\underbrace{\left \| h \right \|_2}_{\leqslant 1}
	 		\leqslant 1.
	 \end{equation}
	Together, \eqref{lm:RE_h1_eq1} and \eqref{lm:RE_h1_eq2} imply  $\calU^{\prime} \subseteq B_{1}^{d^{2}}(d) \cap B_{2}^{d^{2}}(1)$.	So, we have shown that
		$$\calU \subseteq \calU^{\prime} \subseteq B_{1}^{d^{2}}(d) \cap B_{2}^{d^{2}}(1).$$
		Now using \cite[Excercise 9.26]{HDPbook} we obtain
		$$
		\begin{aligned}
			w\left(B_{1}^{d^{2}}(d) \cap B_{2}^{d^{2}}(1)\right) \leqslant c \sqrt{d^{2} \log \left(e \frac{d^{2}}{d^{2}}\right)}  =c d
		\end{aligned}
		$$
	which implies that 	
		$w(\calU) \leqslant c d$
		and $\gamma_{2}\left(\calU,\|\cdot\|_{2}\right) \leqslant c^{\prime} d$
		for some constant $c^{\prime}>0$.
		Hence, for a  constant $c_{1} \geqslant 1$, we have	
		\begin{equation}\label{lm:RE_h1_eq4}
			\gamma_{2}\left(\calP,\|\cdot\|_{2}\right) \leqslant c_{1} \beta d.
		\end{equation}		
\end{enumerate}
Next we will bound the $F, G, H$ terms which we recall below.
$$
\begin{aligned}
	F&=\gamma_{2}\left(\mathcal P,\|\cdot\|_{2}\right)\left[\gamma_{2}\left(\mathcal P,\|\cdot\|_{2}\right)+d_{F}(\mathcal P)\right] 
	+d_{F}(\mathcal P) d_{2}(\mathcal P), \\
	G&=d_{2}(\mathcal P)\left[\gamma_{2}\left(\mathcal P,\|\cdot\|_{2}\right)+d_{F}(\mathcal P)\right], \\
	H&=d_{2}^{2}(\mathcal P).
\end{aligned}
$$
Using \eqref{lm:RE_h1_eq3} - \eqref{lm:RE_h1_eq4} we obtain the following bounds:
 $$
\begin{aligned}
	H &\leqslant \beta^{2}=: \bar{H}\\
 G &\leqslant \beta\left[c_{1} \beta d+\beta \sqrt{m T}\right] \leqslant c_{1} \beta^{2}[\sqrt{m T}+d]=: \bar{G}\\
F &\leqslant c_{1} \beta d \left[c_{1} \beta d+\beta \sqrt{m T}\right]+\beta^{2} \sqrt{m T}
\leqslant c_{1}^{\prime} \beta^{2} d[d+\sqrt{m T}]+  \beta^{2}\sqrt{m T}
\\
& \leqslant c_{2}^{\prime} \beta^{2} d(d+\sqrt{m T}) =c_{2} \bar{G} d=\bar{F}
\end{aligned}
$$
where 
$c_{1}, c_{2} \geqslant 1$. So we have $\bar{F} \geqslant \bar{G} \geqslant \bar{H}$.
Now, Theorem \ref{thm:krahmer_chaos} implies that there exist  $c_{1}, c_{2}>0$ s.t for any $t>0$, 
$$
\begin{aligned}
\mathbb{P}\Big(\forall h \in \calC_{\calS} \cap \unitsph &: \ \|P(U(h)) \rev{\etatil}\|_{F}^{2} \geq \mathbb{E}\|P(U(h)) \rev{\etatil}\|_{F}^{2}-c_{1} \bar{F}-t\Big) \\  &\rev{\geqslant 1 -} 2 \exp \left(-c_{2} \min \left\{\frac{t^{2}}{\bar{G}^{2}}, \frac{t}{\bar{H}}\right\}\right).
\end{aligned}
$$
Now using $
\|Q \Pitil h\|_{2}^{2}=\frac{1}{m}\left \Vert P(U(h))\rev{\etatil}\right \Vert^{2}_{2}
$ and $	\mathbb{E}\|P(U(h)) \rev{\etatil}\|_{2}^{2} \geqslant Tm\left\|h_{2}\right\|_{2}^{2}$ we obtain 
$$
\begin{aligned}
	\mathbb{P}\left(\forall \ h \in \calC_{\calS} \cap \unitsph: \ 
\left\|Q h_{2}\right\|_{2}^{2}  \geqslant T\left\|h_{2}\right\|_{2}^{2}-\frac{c_{1} \bar{G} d}{m}-\frac{t}{m} \right )
 \rev{\geqslant 1 -} 2 \exp \left(-c_{2} \min \left\{\frac{t^{2}}{\bar{G}^{2}}, \frac{t}{\bar{H}}\right\}\right).
\end{aligned}
$$
Set $t=\bar{G} \rev{\sqrt{v}}$ for $v \geqslant 1$, then
$$
\min \left\{\frac{t^{2}}{\bar{G}^{2}}, \frac{t}{\bar{H}}\right\} \rev{\geqslant} \sqrt{v} \quad(\text { Since } \bar{G} \geqslant \bar{H})
$$
which implies the statement of the lemma.

 \subsection{Proof of Lemma \ref{lem:cross-term}} \label{subsec:proof_RE_h1_h_2}
We start by writing 
$$h_{1}=\Dtil^\dagger \Dtil h  =\Dtil^\dagger(\Dtil h)_{\calS}+\Dtil^\dagger(\Dtil h)_{\calS^{c}},$$ 
and 
$$
\begin{aligned}
	h_{2}=\widetilde{\Pi} h =\left(\frac{\ones_m \ones_m^{\top}}{m} \otimes I_{d^{2}}\right) h 
 &=\left(\frac{\ones_{m}}{\sqrt{m}} \otimes I_{d^{2}}\right) \underbrace{\left(\frac{\ones_{m}^{\top}}{\sqrt{m}} \otimes I_{d^{2}}\right) h}_{\widetilde{h}} \\
& =\left(\frac{\ones_{m}}{\sqrt{m}} \otimes I_{d^{2}}\right) \widetilde{h} =U\widetilde{h}
\end{aligned}
$$
where $U:= \frac{\ones_{m}}{\sqrt{m}} \otimes I_{d^{2}}$ has orthonormal columns.
	\paragraph{1.  A bookkeeping inequality.}
		Starting from
        $$
\left\langle Q h_{1} , Q h_{2}\right\rangle  =h_{1}^{\top} Q^{\top} Q h_{2}$$
		and substituting $h_{1},h_{2}$ we obtain
\begin{equation*}
\begin{aligned}
 \left|\left\langle Q h_{1}, Q h_{2}\right\rangle\right| 
&=  \left|\left[\Dtil^\dagger(\Dtil h)_{\calS}+\Dtil^\dagger(\Dtil h)_{\calS^{c}}\right]^{\top} Q^{\top} QU \widetilde{h}\right| \\
\leqslant & \left\|\left[(\Dtil h)_{\calS}^{\top}\left(\Dtil^\dagger\right)^{\top}+(\Dtil h)_{\calS^{c}}^{\top}\left(\Dtil^\dagger\right)^{\top}\right] Q^{\top} Q
U\right\|_{2}\left\|h_{2}\right\|_{2} \quad \left (\text{using}\quad \left\|\widetilde{h}\right\|_{2}=\left\| h_{2} \right\|_{2}\right )\\
\leqslant & \left(\left\|(\Dtil h)_{\calS}^{\top}\left(\Dtil^\dagger\right)^{\top} Q^{\top} QU\right\|_{2}+\left \| \left(\Dtil h\right)_{\calS^{c}}^{\top}\left(\Dtil^\dagger\right)^{\top} Q^{\top} QU\right \|_{2} \right) \| h_{2}\|_2 .
\end{aligned}
\end{equation*}


The first term is bounded as follows. 
$$
\begin{aligned}
&\left\|(\Dtil h)_{\calS}^{\top}\left(\Dtil^\dagger\right)^{\top} Q^{\top} QU\right\|_{2} =\left\|U^\top Q^{\top} Q\left(\Dtil^\dagger\right)(\Dtil h)_{\calS}\right\|_{2} \\
& \leqslant\left\|(\Dtil h)_{\calS}\right\|_{1}\left \| \underbrace{U^\top}_{d^{2} \times m d^{2}} \underbrace{Q^{\top} Q}_{md^{2} \times m d^{2}} \underbrace{\left(\Dtil^\dagger\right)}_{m d^{2} \times|\mathcal E| d^{2}}\right\|_{1 \rightarrow 2}  =\left\|(\Dtil h)_{\calS}\right\|_{1} \max _{j \in\left[\vert\mathcal E\vert d^{2} \right]}\left\|U^\top Q^{\top} Q\left(\Dtil^\dagger\right) e_{j}\right\|_{2}
\end{aligned}
$$
where  we recall the fact 
$$
\|X\|_{1 \rightarrow 2}=\max _{j}\left\|X_{: j}\right\|_{2}.
$$

For the second term we have
 $$
 \begin{aligned}
\left  \| \left (\Dtil h \right )_{\calS^{c} }^{\top}\left(\Dtil^{\dagger}\right)^{\top} Q^{\top} QU\right  \|_{2} 
&\leqslant\left\|(\Dtil h)_{S^{c}}\right\|_{1} \max _{j \in\left[\left|\mathcal E\right| d^{2}\right]}\left\|U^\top Q^{\top} Q \left(\Dtil^\dagger\right) e_{j}\right\|_{2}\\
&\leqslant\left(1+3\left\|\left(\Dtil h\right)_{\calS}  \right\|_{1} + \rev{\frac{4}{\scale}}\norm{\Dtilacomp}_1\right)\max _{j \in\left[\vert \mathcal E \vert d^{2}\right]} \left \|U^\top Q^{\top} Q\left(\Dtil^\dagger\right ) e_j \right \|_{2}
\end{aligned}
$$
where we used $h \in \calC_{\calS} \cap \unitsph$. Putting the two bounds together we get
\begin{equation}\label{lm_cross_term_eq12}
 \left|\left\langle Q h_{1}, Q h_{2}\right\rangle\right| \leqslant  
\left(1+4\left\|(\Dtil h)_{S}\right\|_{1} + \rev{\frac{4}{\scale}}\norm{\Dtilacomp}_1 \right) \max _{j \in\left[\vert \mathcal E \vert d^{2}\right]}\left\|U^\top Q^{\top} Q b_j\right\|_{2}\left\|h_{2}\right\|_{2},
\end{equation}
where $b_{j}:=\Dtil^{\dagger}e_{j}$ denotes the $j$‑th column of $\Dtil^{\dagger}$.

	\smallskip
		\noindent\textit{Interpretation.}  
		Equation \eqref{lm_cross_term_eq12} shows that the entire problem
		boils down to a single quantity
		\[
		M\;:=\;\max_{j\in[|\calE|d^{2}]}
		\norm{U^{\!\top}Q^{\!\top}Q\,b_{j}}_2,
		\]
		plus the harmless $\norm{h_{2}}_{2}\le1$.
\paragraph{2.  Controlling $M$.}
{Let $u_i$ be the $i$‑th column of \rev{$\frac{\ones_{m}}{\sqrt{m}} \otimes I_{d^{2}}$  so that $U = \frac{1}{\sqrt{m}}[u_1  \cdots u_{d^2}]$}. Then
\begin{equation}\label{lem_cross_term_eq1}
    M
= \frac1{\sqrt m}\max_{j}\Bigl(\sum_{i=1}^{d^2}(u_i^\top Q^\top Q\,b_j)^2\Bigr)^{\!1/2}
\ \le\ \frac{d}{\sqrt m}\max_{i\in[d^2],\,j\in[|\mathcal E|d^2]}\bigl|u_i^\top Q^\top Q\,b_j\bigr|.
\end{equation}
}
{
\paragraph{Block/Kronecker expansion.}
Partition $u_i=(u_{i,1}^\top \ldots u_{i,m}^\top)^\top$ and $b_j=(b_{j,1}^\top \ldots b_{j,m}^\top)^\top$ with
$u_{i,l},b_{j,l}\in\mathbb R^{d^2}$ and let $U_{i,l},B_{j,l}\in\mathbb R^{d\times d}$ be their
matrix “unvec” versions, i.e., $\mathrm{vec}(U_{i,l})=u_{i,l}$ and $\mathrm{vec}(B_{j,l})=b_{j,l}$.
Using $(X\otimes I)\mathrm{vec}(B)=\mathrm{vec}(BX^\top)$ and
$\langle A,C\rangle=\mathrm{vec}(A)^\top\mathrm{vec}(C)$,
\begin{equation}\label{lem_cross_term_eq2}
u_i^\top(Q^\top Q)b_j
=\sum_{l=1}^m u_{i,l}^\top\!\bigl((X_l X_l^\top)\otimes I_d\bigr)b_{j,l}
=\sum_{l=1}^m \big\langle B_{j,l}^{\!\top}U_{i,l},\,X_l X_l^\top\big\rangle.
\end{equation}
}
{
\paragraph{Controllability expansion.}
With the block‑lower‑triangular matrix $\Atil^*_l$ from \eqref{eq:Atil_l_def} and its $t$‑th block row
$\Atil^*_l(t)$, one has
\[
\expec[X_l X_l^\top] 
= \sum_{t=1}^{T} \tilde A_l^{*}(t)\,\tilde A_l^{*}(t)^\top
= \sum_{t=1}^{T}\ \sum_{s=0}^{t-1} (A_l^*)^s\bigl((A_l^*)^s\bigr)^\top
=:\ G_l,
\]
where $G_l:=\sum_{t=1}^{T}\Gamma_{t-1}(A_l^*)$ and
$\Gamma_{t-1}(A)=\sum_{s=0}^{t-1}A^s(A^s)^\top$. Thus \eqref{lem_cross_term_eq2} becomes
\begin{equation}\label{lem_cross_term_eq3}
\mathbb E\bigl[ u_i^\top Q^\top Q\,b_j\bigr]
= \sum_{l=1}^m \big\langle Z_l^\top,\,G_l\big\rangle,
\qquad
Z_l:=U_{i,l}^{\!\top}B_{j,l}.
\end{equation}
}
{
\paragraph{Step 1 (deterministic offset).}
Recall $D^{\dagger}
	=\bigl[s_1\;\;s_2\;\;\dots\;\;s_{\vert \calE\vert}\bigr]$. Notice from the structure of $U_{i,l}, B_{j,l}$ that for any $i\in[d^2],\,j\in[|\mathcal E|d^2]$, there \rev{are two possibilities. Either $Z_l = 0$ for all $l \in [m]$ -- this implies $u_i^\top Q^\top Q\,b_j = 0$. Or, there} exists a corresponding $k \in [\abs{\calE}]$ and $a,b \in [d]$ \rev{($k, a, b$ depending only on $i,j$)} such that
$Z_l = s_{k,l} e_a e_b^\top$, where \rev{$e_a, e_b$ are} canonical vectors of $\matR^d$. Then writing $\bar G:=\frac{1}{m}\sum_{l=1}^m G_l$,
\[
\mathbb E\bigl[u_i^\top Q^\top Q\,b_j\bigr]
=\sum_{l=1}^m s_{k,l}\,(G_l)_{b,a}
=\sum_{l=1}^m s_{k,l}\bigl((G_l)_{b,a}-\bar G_{b,a}\bigr)
\le \|s_k\|_2\,
\sqrt{\sum_{l=1}^m\bigl((G_l)_{b,a}-\bar G_{b,a}\bigr)^2}
\]
where we used $\ones^\top_{m}s_k=\sum_{l=1}^m s_{k,l}=0$ for any $k\in [\vert\mathcal E \vert]$ since $s_k\in \operatorname{span} (D^{\dagger})$ and $\operatorname{span} (D^{\dagger}) \perp\operatorname{span} (\ones_{m})$.
Using the definition of $\mu$ we \rev{obtain for all $i\in[d^2],\,j\in[|\mathcal E|d^2]$ that}
\begin{equation}\label{lem_cross_term_eq4}
\boxed{\;
\mathbb E\bigl[u_i^\top Q^\top Q\,b_j\bigr]
\ \le\
\mu\, \rev{\max_{a,b\in[d]}} \sqrt{\sum_{l=1}^m\bigl((G_l)_{b,a}-\bar G_{b,a}\bigr)^2}\ .
\;}
\end{equation}
}
{
\paragraph{Step 2 (Hanson–Wright fluctuation).}
Recall the definition of $\tilde\eta_l:=(\eta_{l,1}^\top \ldots \eta_{l,T}^\top)^\top \in \mathbb R^{dT}$ and let
\[
W_l=\tilde A_l^{*\,\top}\,(I_T\otimes Z_l)\,\tilde A_l^{*},
\qquad
W=\mathrm{blkdiag}(W_1,\ldots,W_m).
\]
Then $u_i^\top Q^\top Q\,b_j=\tilde\eta^\top W\tilde\eta$ with
$\tilde\eta:=(\tilde\eta_1^\top \ldots \tilde\eta_m^\top)^\top$, and the Hanson–Wright inequality \rev{(see eg., \cite{rudelson2013})} yields, for universal $c>0$,
\[
\mathbb P\!\left(\left|\tilde\eta^\top W\tilde\eta-\mathbb E[\tilde\eta^\top W\tilde\eta]\right|\ge t\right)
\le 2\exp\!\Big(-c\min\{t^2/\|W\|_F^2,\ t/\|W\|_2\}\Big).
\]
Using $\|A^\top B A\|_F\le\|A\|_2^2\|B\|_F$ and $\|A^\top B A\|_2\le\|A\|_2^2\|B\|_2$ with
$\beta:=\max_l\|\Atil^*_l\|_2$ (see \eqref{eq:Atil_l_def}), and noting
$\|I_T\otimes Z_l\|_F=\sqrt{T}\,\|Z_l\|_F \  \rev{\leq} \ \sqrt{T}\,|s_{k,l}|$ and
$\|I_T\otimes Z_l\|_2=\|Z_l\|_2 \ \rev{\leq} \ |s_{k,l}|$, one gets
\[
\|W\|_F^2=\sum_{l=1}^m\|W_l\|_F^2\ \le\ \beta^4\,T\,\sum_{l=1}^m s_{k,l}^2
\ \le\ \beta^4 T\,\mu^2,\qquad
\|W\|_2\ \le\ \beta^2\,\mu.
\]
Hence, for $\delta \in (0,c_2)$ for a suitably small constant $c_2 < 1$, it holds with probability at least $1-\delta$ that 
\begin{equation}\label{lem_cross_term_eq5}
\boxed{\;
\bigl|u_i^\top Q^\top Q\,b_j-\mathbb E[u_i^\top Q^\top Q\,b_j]\bigr|
\ \leq c_1 \sqrt{T} \beta^2\mu \log(\tfrac{1}{\delta})}, 
\end{equation}
for a suitably large constant $c_1 > 1$.
}
{
\paragraph{Putting \eqref{lem_cross_term_eq4} - \eqref{lem_cross_term_eq5} into \(M\).}
Apply \eqref{lem_cross_term_eq5} with a union bound over $i\in[d^2]$ and $j\in[|\mathcal E|d^2]$ and plug into \eqref{lem_cross_term_eq1}. \rev{Using \eqref{lem_cross_term_eq4}}, we then have that there exist absolute constants $c_1 > 1$ and $c_2 \in (0,1)$ such that, for every $\delta\in(0,c_2)$,
\begin{equation}\label{lem_cross_term_eq6}
\boxed{\;
\mathbb P\Bigg[
M\ \le\ \frac{d\,\mu}{\sqrt m}\,
\underbrace{\max_{a,b\in[d]}\Big(\sum_{l=1}^m\bigl((G_l)_{b,a}-\bar G_{b,a}\bigr)^2\Big)^{\!1/2}}_{=: \Delta_G}
\ +\ c_1\, \rev{d} \beta^2\mu\,\sqrt{\frac{T}{m}}\ \log\!\Big(\frac{|\mathcal E|\,d}{\delta}\Big)\Bigg]\ \ge\ 1-\delta\ .
\;}
\end{equation}
}
\paragraph{3.  Putting pieces together.}
{
Starting from the bookkeeping inequality \eqref{lm_cross_term_eq12},
\[
\bigl|\langle Qh_1,\,Qh_2\rangle\bigr|
\;\le\;
M\, \Bigl(1+4\|(\Dtil h)_S\|_1+\rev{\frac{4}{\scale}}\|(\Dtil a^{*})_{S^c}\|_1\Bigr)\,\|h_2\|_2,
\]
use $\|(\Dtil h)_S\|_1\le \frac{\sqrt{|S|}}{\kappa_S}\|h\|_2\le \frac{\sqrt{|S|}}{\kappa_S}$ and $\|h_2\|_2\le 1$.
Inserting the bound for $M$ from \eqref{lem_cross_term_eq6},
we obtain that, with probability at least $1-\delta$, simultaneously for all 
$h\in C_S\cap \mathbb{S}^{md^2-1}$,
\begin{equation}\label{eq:cross-term-final}
\bigl|\langle Qh_1,\,Qh_2\rangle\bigr|
\;\le\;
c_3\!\frac{\mu}{\sqrt{m}}\left(1+\frac{\sqrt{|S|}}{\kappa_S}+\rev{\frac{1}{\scale}}\bigl\|(\Dtil a^{*})_{S^c}\bigr\|_1\right)
\!\left[
d\,\Delta_G
\;+\;
\beta^2\, \rev{d} \sqrt{T}\log\!\Bigl(\frac{|\mathcal E|\,d}{\delta}\Bigr)
\right].
\end{equation}
This is exactly the claim in Lemma~\ref{lem:cross-term}. 
\smallskip}

\section{Experiments}
\label{sec:experiments}


In this section, we validate the performance of the graph total variation regularizer for estimating the LDS matrices $A^*_{l}$ on a series of synthetic experiments\footnote{The code  can be found at: \url{https://github.com/donnate/LDS}.}. To this end, we consider linear dynamical systems on a graph on $m$ nodes where each node $l\in\{1,\dots,m\}$ is associated with a time series that evolves as
\begin{equation}
x_{l,t+1} \;=\; A_l^* x_{l,t} + \eta_{l, t+1}, 
\qquad \eta_{l,t}\overset{\text{i.i.d}}{\sim}\mathcal N(0, I_d),
\label{eq:lds}
\end{equation}
with (unless stated otherwise) $d=2$. The system matrices $(A^*_{l})_{l=1}^m$ vary across nodes but are structured by the underlying graph. We compare (a) \textit{our graph-TV LDS}; (b) an \textit{individual OLS (OLS\textsubscript{ind}) estimator,}  where a separate LDS is learnt per node; \textit{ (c) the pooled OLS (OLS\textsubscript{pooled}),} where we learn a single common $A$  across all nodes; \textit{(d) a Laplacian LDS} (\cite{tyagi2024jointlearn}), similar to our graph-TV estimator but that uses an $\ell_2^2$ penalty of the form $\| \tilde{D}a\|_2^2$  on the differences across edges rather than a TV penalty; and \textit{(e) a group-lasso estimator}, that groups coefficients together (across the graph) and imposes a lasso penalty. This lasso penalty allows us to compare graph penalties against another type of structural prior (here, group sparsity).

\subsection{Experimental Protocol}
\paragraph{Graphs and ground truths.} To evaluate the importance of the graph topology in the final performance of the graph-TV regularizer, we consider various types of graph topologies: the chain graph, the 2d grid, the star graph, as well as an Erdős-Rényi random graph, parameterized by the connection probability between any pair of nodes, with varying levels of sparsity. 

Each node $l$ is assigned a matrix of the form
    \begin{equation}
A^*_{l} \;=\;
\begin{bmatrix}
\beta^*_l & 0.1 \\
0 & 0.6 
\end{bmatrix} \in \mathbb{R}^{2 \times 2},
\end{equation} where $(\beta^*_l)_{l =1}^m$ are chosen in one of the following ways.
\begin{description}
    \item[(i) Piecewise constant.] In this case, nodes are assigned to one of three groups, and
    \begin{equation}\label{eq:pw}
      \beta^*_l \in \{ s, 0, -s\},  
    \end{equation} 
      with $s$ a scaling parameter that controls the maximal jump size: $s = \| \tilde{D} a^*\|_{\infty}$. 
      
    \item[(ii) Smoothly varying.] In this case, we compute the 2D spectral embedding for each node in the graph and define the signal as (for $\omega$ a parameter)
\begin{equation}\label{eq:smooth}
    \beta^*_l = s\!\left(\cos\!\left(\omega t_1\right)
    \times \cos\!\left(\omega\, t_2\right)\right),
    \quad
    \text{where } t_1 = l \bmod \sqrt{m}, \;
    t_2 = \lfloor l / \sqrt{m} \rfloor.
\end{equation}
\end{description}

In each case, the signal is scaled to achieve either the same $\ell_{\infty}$ norm in the signal difference  across edges (i.e., $\|\tilde{D}  a^*\|_{\infty}  = \|D  \beta^*\|_{\infty}, \text{ with } \beta^* \in \mathbb{R}^m$ is made similar across signals). This ensures appropriate comparison across all estimators. Figure~\ref{fig:example1} shows examples of the piecewise constant $\beta^*_l$ generated on different graph topologies, while Figure~\ref{fig:example2} shows examples of its smooth counterpart.

\begin{figure}
    \centering
    \includegraphics[width=0.7\linewidth]{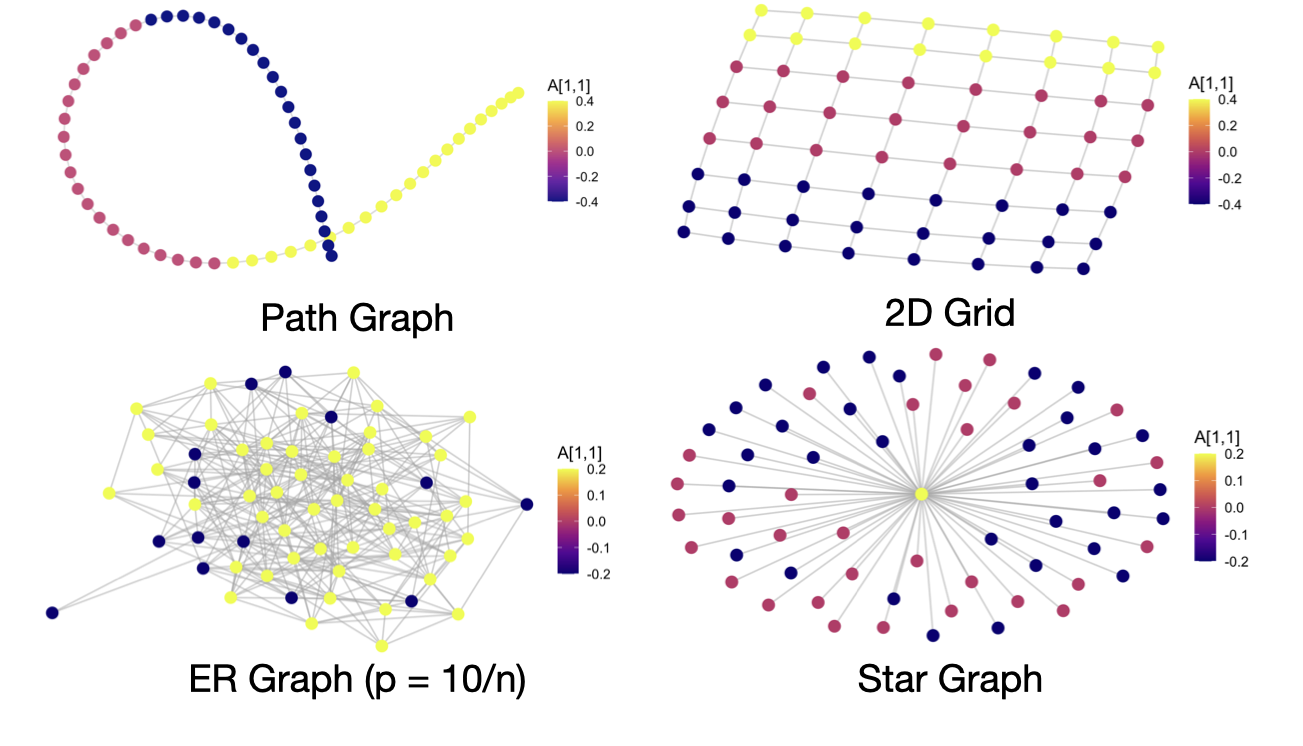}
    \caption{Examples of coefficients $\beta^*_l$ generated, overlaid on the graph. Here, the coefficients $(\beta^*_l)_{l=1}^m$ are chosen to be piecewise constant across the graph, and the maximal jump size is constrained to $\| \Dtil a^*\|_{\infty} = 0.4$.}
    \label{fig:example1}
\end{figure}

\begin{figure}
    \centering
    \includegraphics[width=0.7\linewidth]{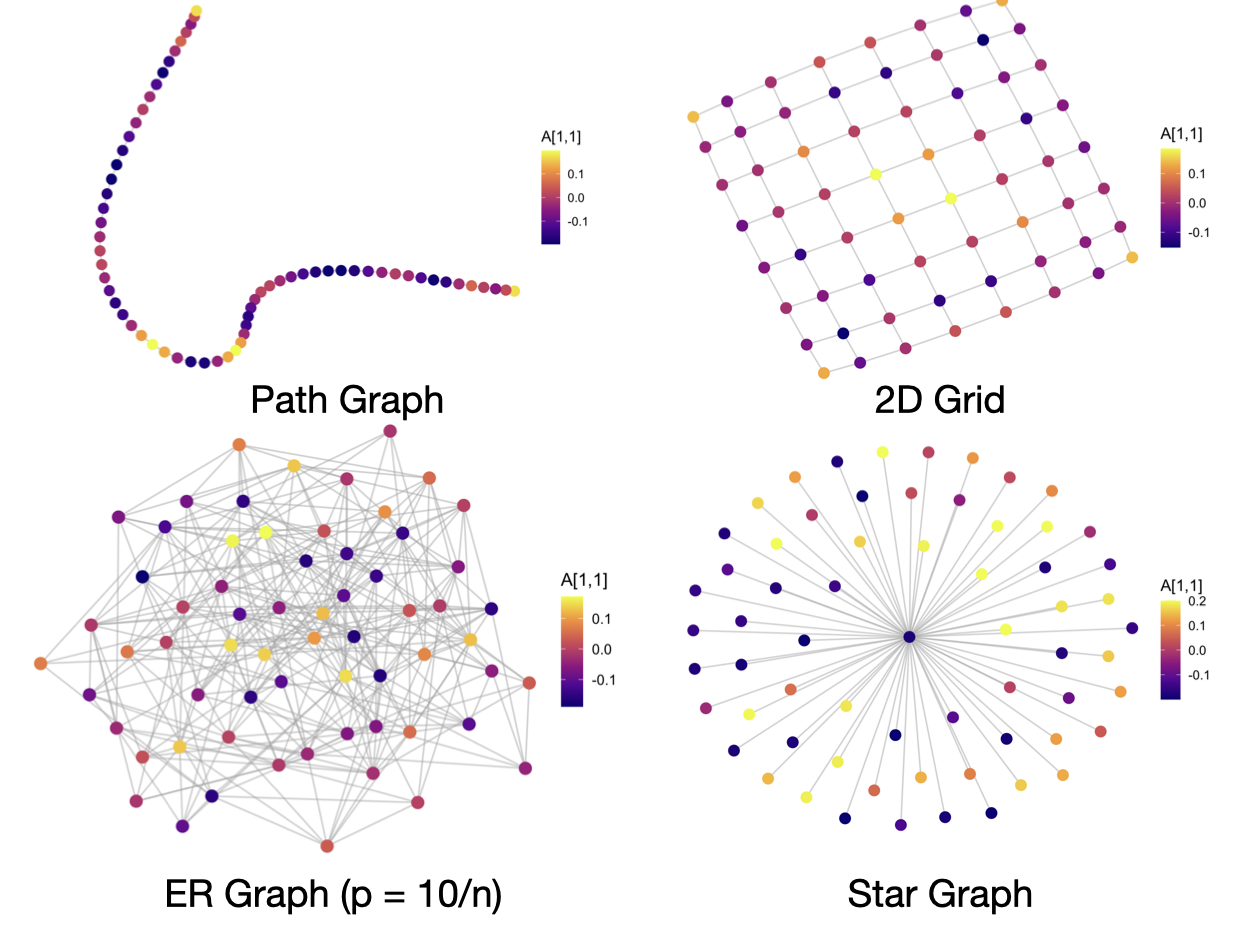}
    \caption{Examples of coefficients $\beta_l$ generated, overlaid on the graph. Here, the coefficients $(\beta^*_l)_{l=1}^m$ are  chosen to be smoothly varying across the graph, as per ~\eqref{eq:smooth}.}
    \label{fig:example2}
\end{figure}

\paragraph{Estimators and parameter selection.}
We vectorize each $A^*_{l}\in\mathbb{R}^{d\times d}$ and stack them in a single vector $a^*=\big(\operatorname{vec}(A^*_{1})^\top,\dots,\operatorname{vec}(A^*_{m})^\top\big)^\top\in\mathbb{R}^{m d^2}$. Let $T_{\text{train}}$ denote the length of the time series used for training. For node $l$, with time series $X_{l}\in\mathbb{R}^{d\times T_{\text{train}}}$ and response $\Xtil_l \in\mathbb{R}^{d\times T_{\text{train}}} $, the local design is $Q_l=(X_{l}^{\top}\otimes I_d)\in\mathbb{R}^{(T_{\text{train}}d)\times d^2}$. We form $Q=\operatorname{blkdiag}(Q_1,\dots,Q_m)$ and $\tilde{x}=\vect(\Xtil_l) \in \mathbb{R}^{d T_{\text{train}}}.$ 

\paragraph{TV-joint LDS.}
We solve the generalized lasso (as per equation~\ref{eq:tv_pen_ls_vecform})
\begin{equation}
\min_{a\in\mathbb{R}^{m d^2}}\;\frac{1}{2m}\,\big\|Qa-\xtil\big\|_2^2
\;+\;\lambda\,\big\|\,\Dtil\,a\,\big\|_1,
\label{eq:tv-lds}
\end{equation}
using a full regularization path (up to 200 steps). The regularization parameter $\lambda$ is selected by minimizing the validation MSE (defined below). 

\paragraph{Data generation, splits, and metrics}
Given a graph and a collection of LDS matrices $(A^*_l)_{l=1}^m$, we simulate $T=T_{\text{train}}+ 2b + T_{\text{val}}+T_{\text{test}}$ steps per node  from~\eqref{eq:lds}, with $b$ as a buffer parameter (here, chosen to be 100) that ensures that the validation and testing sets are (approximately) independent of one another and of the training set. We use the first $T_{\text{train}}$ steps for training, the next $T_{\text{val}}$ steps for validation, and the last $T_{\text{test}}$ for testing. Unless stated otherwise, we set $T_{\text{val}}=4$, $T_{\text{test}}=8$, and repeat over seeds $r=1,\dots,n_{\text{rep}}$.

We report the parameter and test prediction MSEs, defined as
\begin{align}
\text{(Parameter MSE)}&&
MSE
&=
\frac{1}{m}\sum_{l=1}^m\big\|\,\est{A}_{l}-A^*_{l}\,\big\|_F^2,
\\
\text{(Test prediction MSE)}&&
MSE_{\text{pred}}
&=
\frac{1}{m\,T_{\text{test}}}
\sum_{l=1}^m\big\|\,\est{A}_{l}X_{\text{test}}^{(l)}-\Xtil_{\text{test}}^{(l)}\,\big\|_F^2,
\end{align}
where $X_{\text{test}}^{(l)}, \Xtil_{\text{test}}^{(l)} \in \matR^{d \times T_{\text{test}}}$. 
Curves show means and 95\% normal confidence intervals (mean $\pm$ $1.96\times$ standard error) over seeds.

\subsection{Results}
\label{subsec:results}

\paragraph{(a) Error vs topology.} We begin evaluating the error as a  function of the graph topology. To this end, we fix the number of nodes to $64$ and generate different graph topologies and corresponding piecewise constant signals. Figure~\ref{fig:linf} shows the error of the different estimators as a function of the maximal jump size $\|\Dtil a^*\|_{\infty}.$ Note that, since $\beta^*_l$ is the only varying coefficient in the matrix $A^*_l$, $\| \Dtil a^*\|_{\infty} = \| D \beta^*\|_{\infty}$. To emphasize the link with models~\ref{eq:pw} and \ref{eq:smooth}, we will  use the latter notation to denote the gap size. As expected, our estimator significantly outperforms the naive estimator $\hat{\beta}_{OLS_{\text{ind}}}$, and improves upon the pooled estimator $\hat{\beta}_{OLS_{\text{pool}}}$, particularly in regimes where the jump size is large. In particular, for the path graph, the TV  $(\ell_1$) denoiser substantially improves upon the Laplacian  $(\ell_2^2$)  regularizer.

\begin{figure}[h]
    \centering
\includegraphics[width=\linewidth]{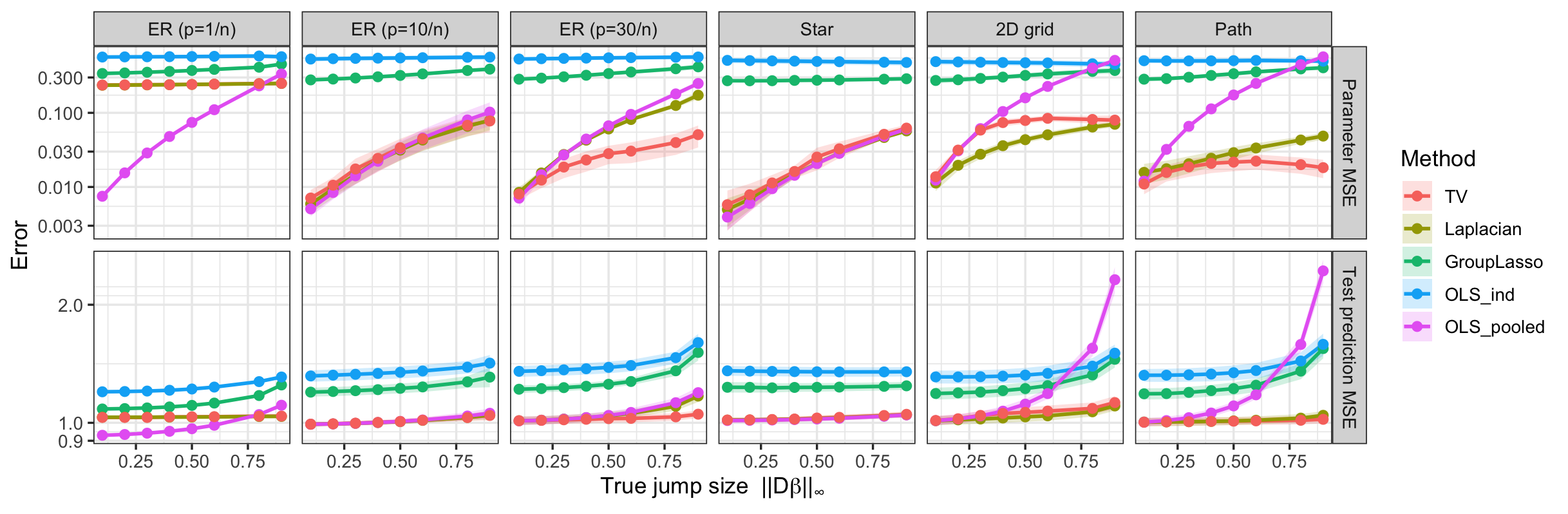}
    \caption{Performance of the TV regularizer and other competing methods as a function of the maximal jump size $\|D \beta^*\|_{\infty}$ on a piecewise constant  signal (as per \eqref{eq:pw}).}
    \label{fig:linf}
\end{figure}

\begin{figure}[h]
    \centering
\includegraphics[width=\linewidth]{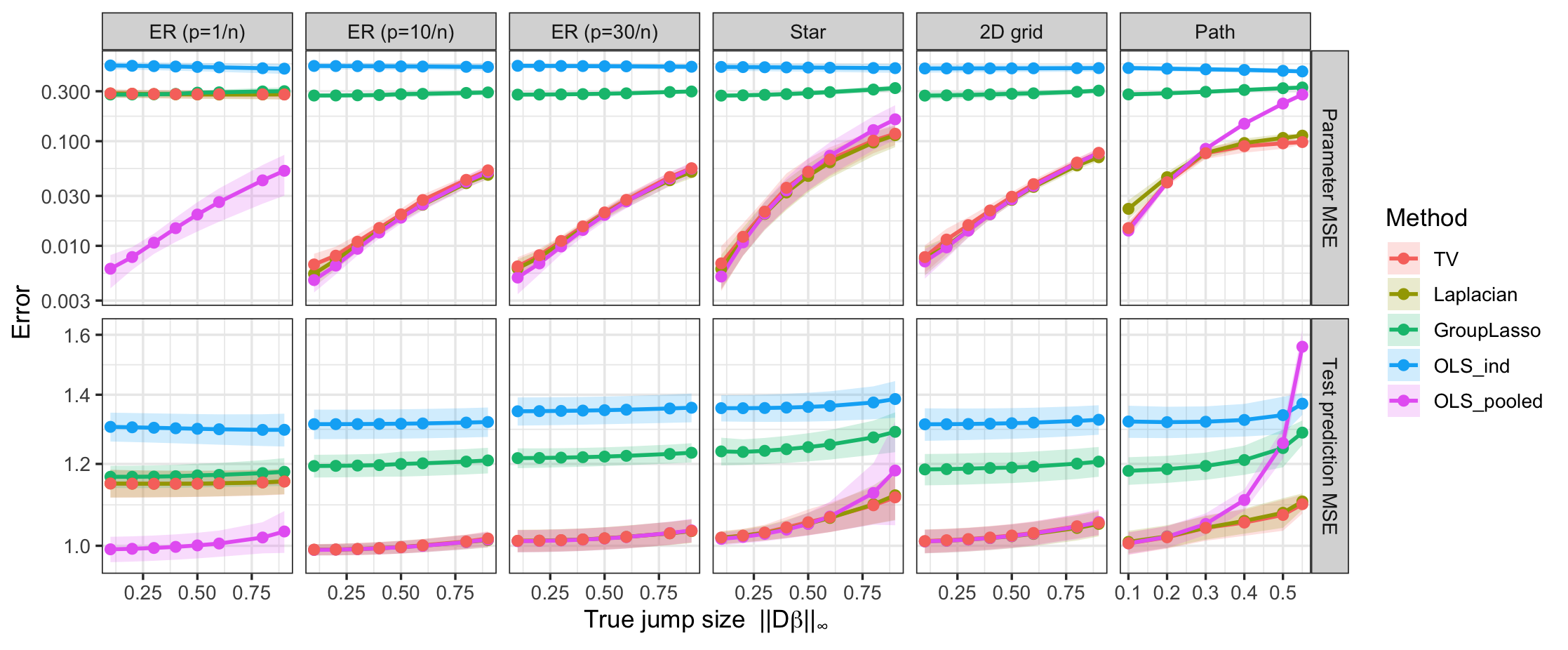}
    \caption{Performance of the TV regularizer as a function of the maximal jump size $\|D \beta^*\|_{\infty}$ on smooth  signal (defined as per ~\eqref{eq:smooth}).}
    \label{fig:linf_smooth}
\end{figure}

\paragraph{(b) Error vs.\ training length $T_{\text{train}}$.}
We also vary $T_{\text{train}}\in\{4,8,16,32,64,128,256\}$ (log-scale $x$-axis). 
We expect TV to clearly outperform OLS\textsubscript{ind} at small $T_{\text{train}}$ by borrowing strength across nodes, while approaching OLS\textsubscript{ind} as $T_{\text{train}}$ grows; OLS\textsubscript{pooled} is expected to perform well when $T_{\text{train}}$ is extremely small, as the benefits of the smoothing induced by fitting the estimator over the entire graph are expected to outweigh the bias incurred in this case. The results for the different topologies and signals are reported in Figures~\ref{fig:function_of_time_pw} and \ref{fig:function_of_time_smooth}, and confirm our expected findings. In particular, for the path graph and dense Erdős–Rényi graph, the TV regularizer consistently outperforms all other estimators, including the Laplacian estimator. We also note that for the piecewise-constant signal, the graph TV estimator offers significant advantages in terms of estimation MSE over the Laplacian estimator (e.g. in the path and ER cases). Even in the 2D grid, as $T_{\text{train}}$ increases, we see that the TV regularizer improves upon the Laplacian. This is expected, as the TV regularizer's ability to preserve sharp contrasts is advantageous for signals defined by a few sparse, high-magnitude differences rather than small, distributed ones.
\begin{figure}[H]
  \centering
\begin{subfigure}[t]{\textwidth}
        \includegraphics[width=.95\linewidth]{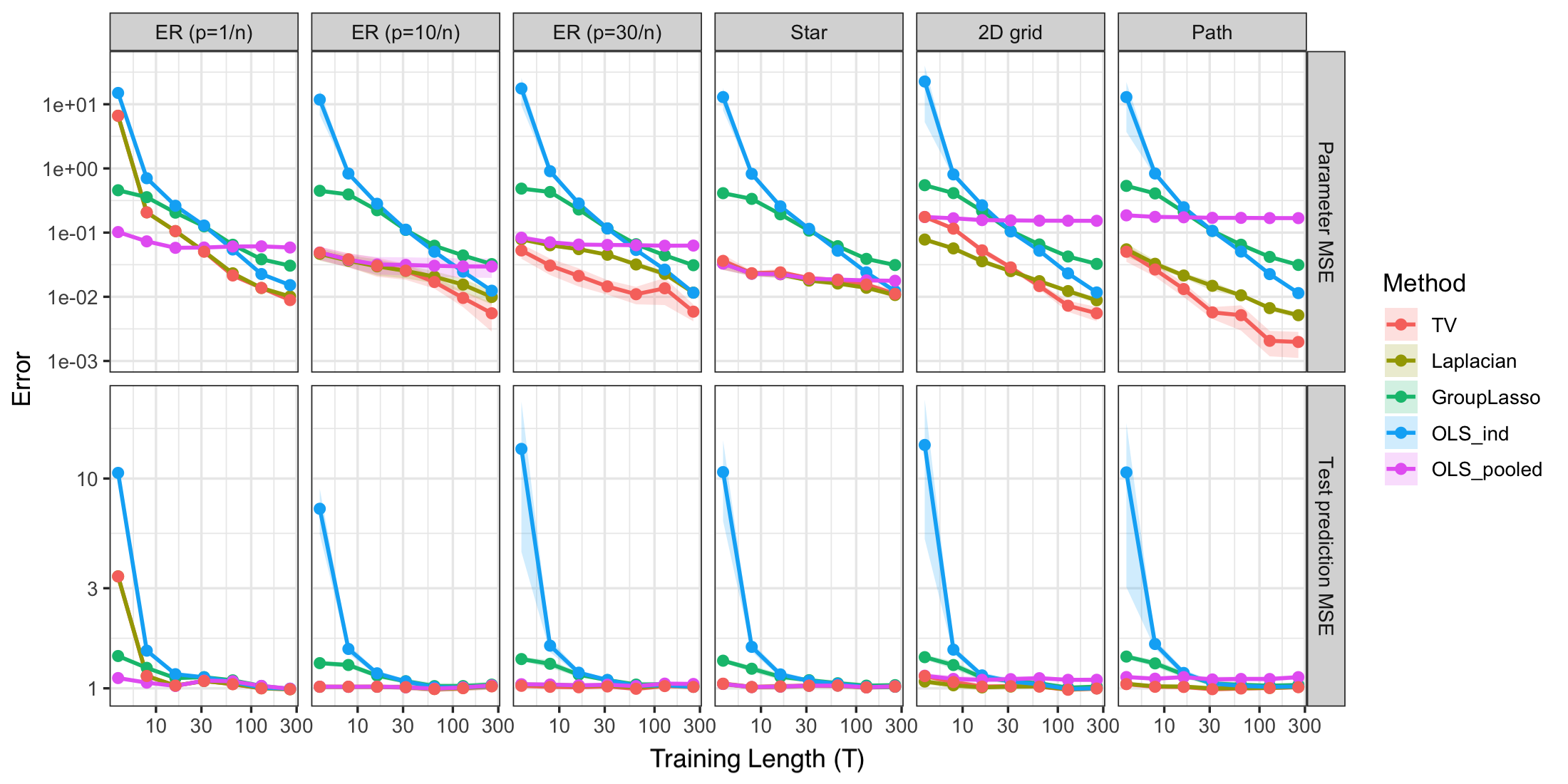}
  \caption{Parameter and test prediction MSE as a function of training time length $T_{\text{train}}$ and graph topology. The signal generated in these plots is piecewise constant (as per Equation~\ref{eq:pw}), with $\|D\beta^*\|_{\infty} =0.5$.  Lines and dots indicate MSE mean averaged over 15 trials $\pm$ 95\% CI.}
  \label{fig:function_of_time_pw}
  \end{subfigure}
\begin{subfigure}[t]{\textwidth}
        \includegraphics[width=.95\linewidth]{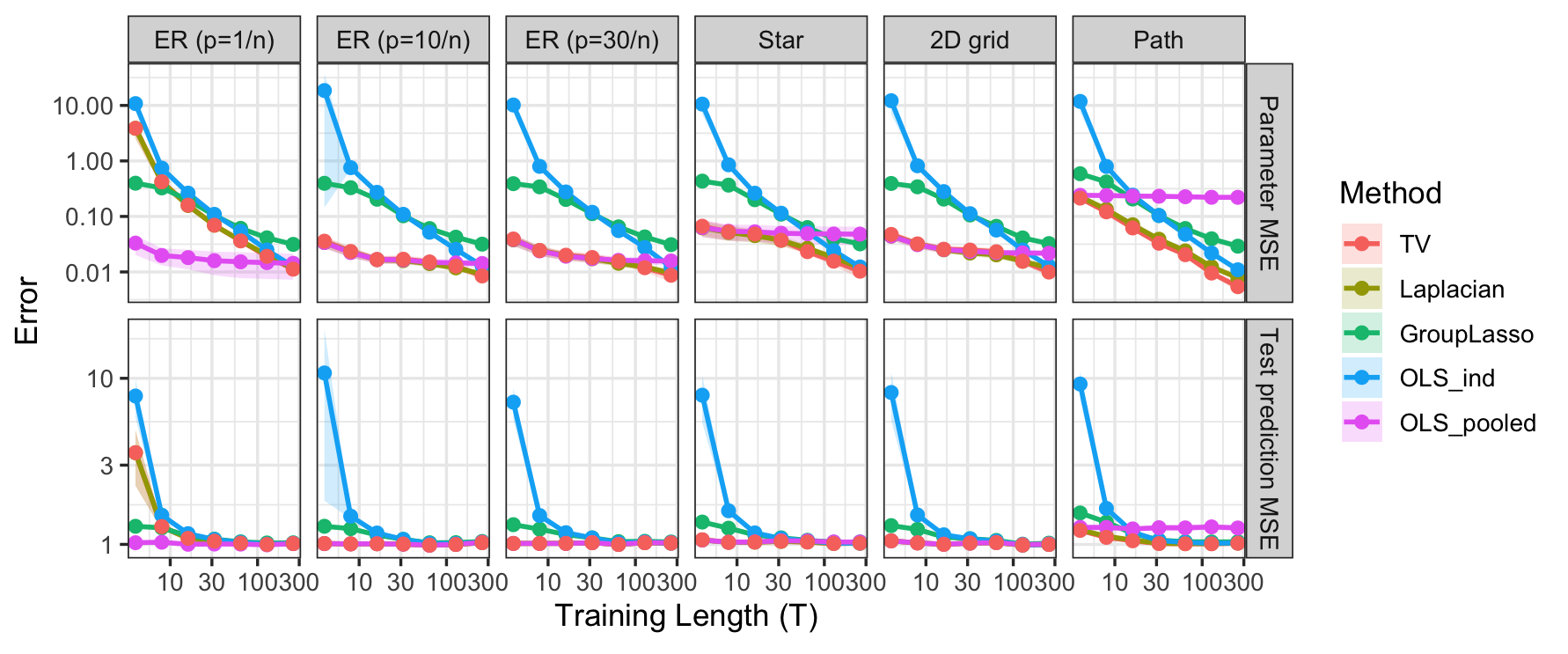}
  \caption{Parameter and test prediction MSE as a function of training time length $T_{\text{train}}$ and graph topology. The signal generated here was smooth (as per Equation~\ref{eq:smooth}), with $\|D\beta^*\|_{\infty} =0.5$. Lines and dots indicate MSE mean averaged over 15 trials $\pm$ 95\% CI.}
  \label{fig:function_of_time_smooth}
  \end{subfigure}
  \caption{Parameter and test prediction MSE as a function of training time length $T_{\text{train}}$ for different graph topologies. }
\end{figure}
\paragraph{(c) Error vs.\ number of nodes $m$.} We hold $T_{\text{train}}$ fixed (here, $T_{\text{train}}=10$) and increase the graph size ($m$). The results are shown in Figure~\ref{fig:error-vs-m-grid} for a piecewise constant signal. We note that our
TV estimator improves with larger $m$. By contrast, OLS\textsubscript{ind} does not benefit from added nodes, while especially in the path or grid graph, the pooled OLS (OLS\textsubscript{pooled}) oversmoothes the signal, inducing a substantial parameter MSE compared to both graph-regularized penalties.

\begin{figure}[H]
  \centering
\includegraphics[width=.95\linewidth]{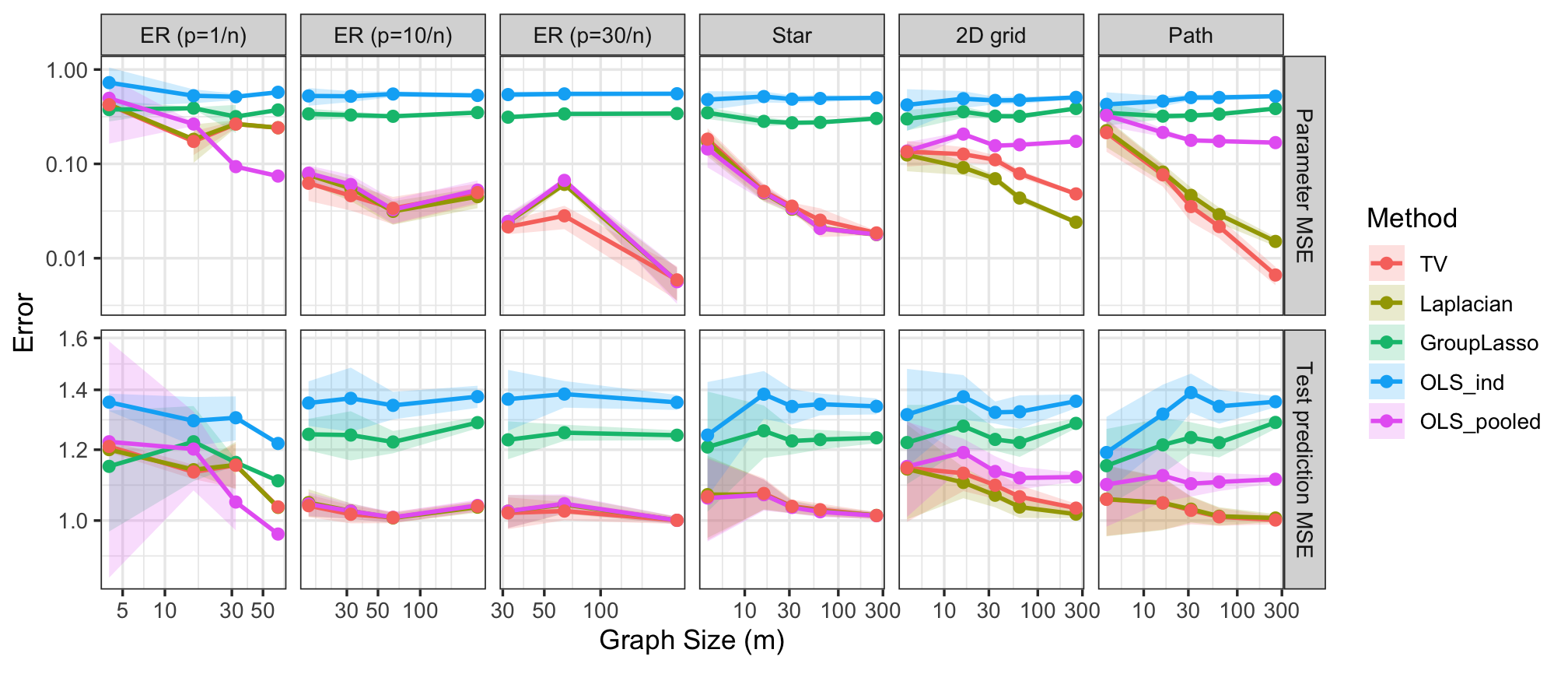}
  \caption{Parameter and test prediction MSE versus $m$ for a piecewise constant signal, for different graph topologies. Lines and dots indicate MSE mean averaged over 15 trials $\pm$ 95\% CI.}
  \label{fig:error-vs-m-grid}
\end{figure}


\subsection{Real Data Experiments: EPA data}

We evaluate the proposed estimator on data from the U.S. Environmental Protection Agency (EPA) national air-quality monitoring network. Specifically, we analyze daily arithmetic-mean concentrations of nitric oxide (NO), particulate matter with aerodynamic diameter $\leq 10 \mu m$ (PM\textsubscript{10}), and daily mean temperature from 2010-2024.\footnote{Data are available from the EPA: \url{https://www.epa.gov/outdoor-air-quality-data/download-daily-data}.}
These measurements are obtained from 398 monitoring stations across the United States. We process the data by retaining measurements from primary instruments with at least 75\% temporal coverage, removing observations flagged as special events, and applying variable-specific transformations to approximate normality (none for temperature, $\log$ for PM\textsubscript{10}, and a log–log transform $\log(\log(x+1))$ for NO). Stations are further filtered so that, within each year, fewer than 5\% of observations are missing; this reduces the number of stations by roughly 45\% on average (e.g., 221 stations in 2020; see Figure~\ref{fig:map}). Missing values are imputed via last observation carried forward. Using the reported geographic coordinates, we construct a spatial graph by connecting each station to its five nearest neighbors; the resulting graph is shown in Figure~\ref{fig:map}, with colors indicating NO on April 9, 2024 (day 100).

\begin{figure}[!ht]
    \centering
    \includegraphics[width=\linewidth]{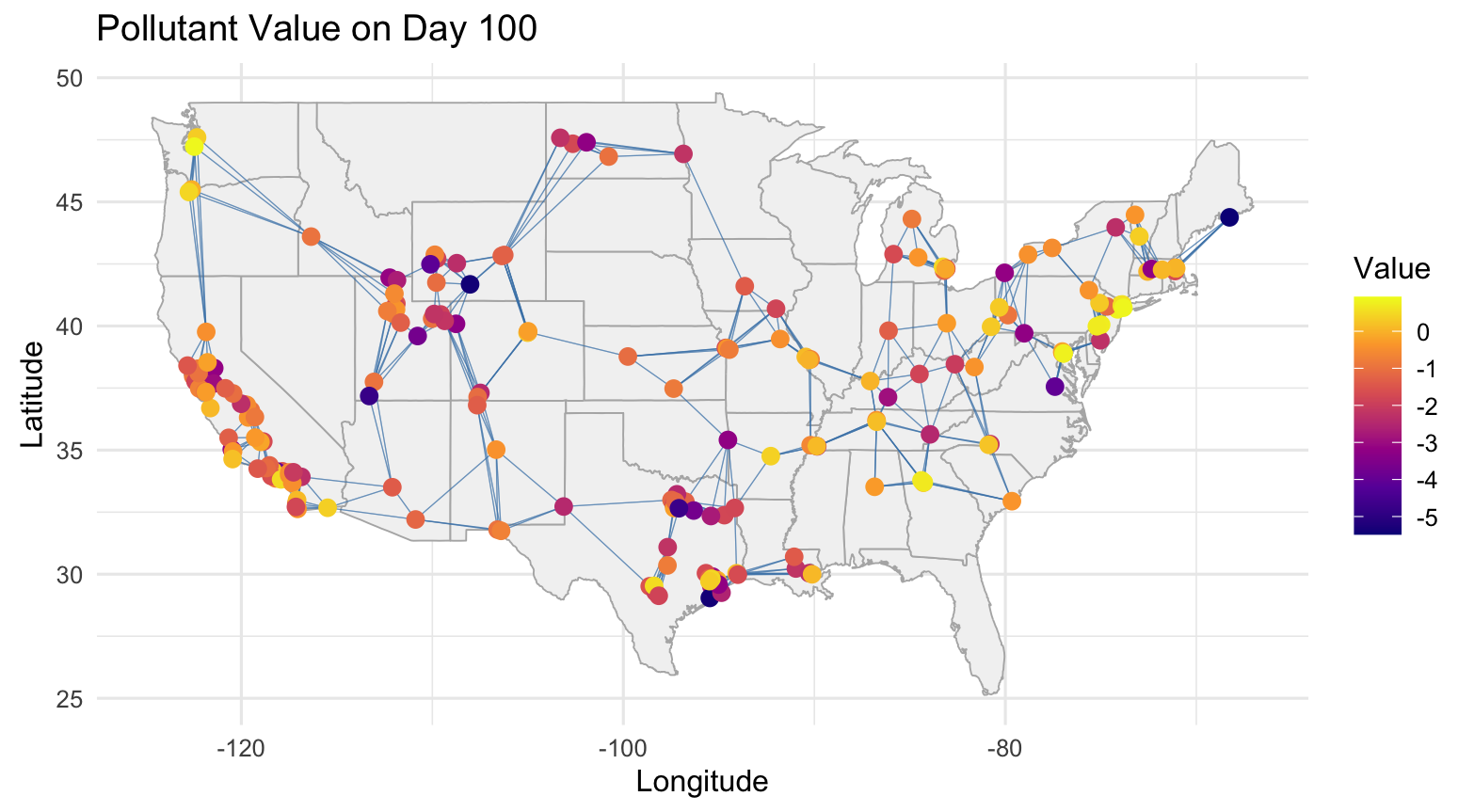}
    \caption{Air quality stations, along with the induced kNN graph ($k=5).$}
    \label{fig:map}
\end{figure}

We compare our method for fitting a $d=2$-LDS to our 3 EPA datasets to the benchmarks described in the previous section. We also compare the results to a persistence baseline that, for each forecast horizon, predicts using the value observed at time $t=0$. For each year, we draw a start day $s$ uniformly from the first half of the year and define the training window as days $s$ to $s+T_{\text{train}}$. To ensure independence across splits, we insert 20-day buffer windows between train, validation, and test sets. Validation error is computed at 10 evenly spaced evaluation points over the 100-day validation window, and test error at 10 evenly spaced points over the remaining test window. This procedure is repeated six times per year; results are first averaged within year, then aggregated across 15 years. 
A visualization of the coefficients obtained by the different methods is provided in Figure~\ref{fig:map_comp_coef}. 

\paragraph{Results.} Figure~\ref{fig:map_comp_coef} illustrates the structural differences and implicit regularization of each method. With $T_{\text{train}}=15$ (a data-limited regime), the station-wise OLS fits are highly heterogeneous, as evidenced by the wide spread of coefficients. By contrast, pooled OLS yields a single global coefficient shared across stations. Both the graph total-variation (TV) denoiser and the Laplacian regularizer produce coefficients that vary smoothly over the spatial graph. As expected, TV recovers piecewise-constant (or piecewise-smooth) structures with sharp transitions aligned to graph boundaries, whereas Laplacian regularization yields a smoother, more diffuse spatial distribution.

To quantify performance, we evaluate two criteria: (a) the stability of the estimated coefficients across repeated fits; and (b) the prediction error. In the absence of ground truth parameters, coefficient stability serves as a proxy for estimation accuracy: a suitable regularizer should yield reproducible coefficients and support more reliable inferences. Figure~\ref{fig:results_h1} highlights the results for prediction on the next day (horizon $=1$) for the NO dataset, while Figure \ref{fig:results_h1_pm} shows the same for the PM10 dataset. 
%
%
%
Table~\ref{tab:tab:airquality} shows the average test RMSE across all 3 datasets, where $T_{\text{train}}=10$, with the same 1-day horizon.  Consistent with our simulations, we note that the prediction MSE for the TV estimator matches that of the best competing method. However, we note substantial differences in coefficient stabilities across datasets. In particular, we note that the Laplacian estimator tends to produce more variable coefficients across runs. In contrast, the TV estimator is relatively more consistent across fittings.
Figures~\ref{fig:results_h1} and \ref{fig:results_h1_pm} also highlight the performance of the method as a function of the training length and the number of neighbors $k$.  In particular, we observe that as $k$ and $T_{\text{train}}$ increase, the difference in coefficient stability between the Laplacian and TV regularizer increases.
\begin{figure}[h]
  \centering

  \begin{subfigure}{\textwidth}
    \centering
    \includegraphics[width=0.72\linewidth]{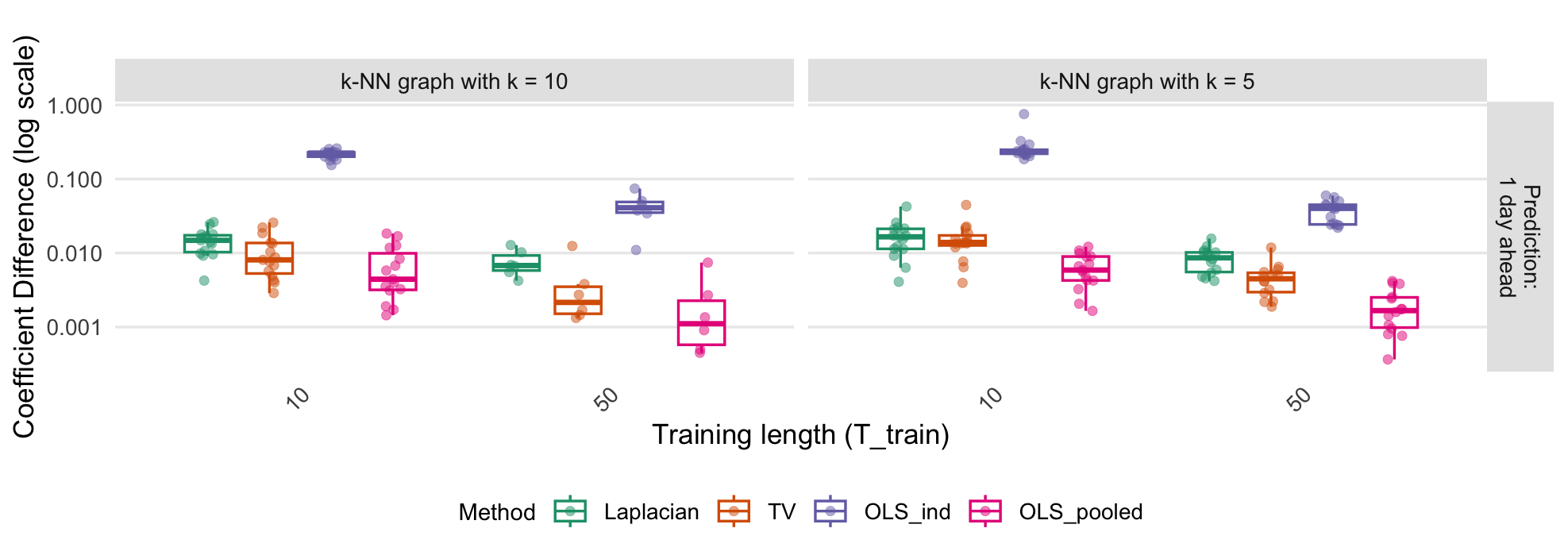}
    \subcaption{Coefficient Stability: NO }
  \end{subfigure}
  \hfill
  \begin{subfigure}{\textwidth}
    \centering
    \includegraphics[width=0.72\linewidth]{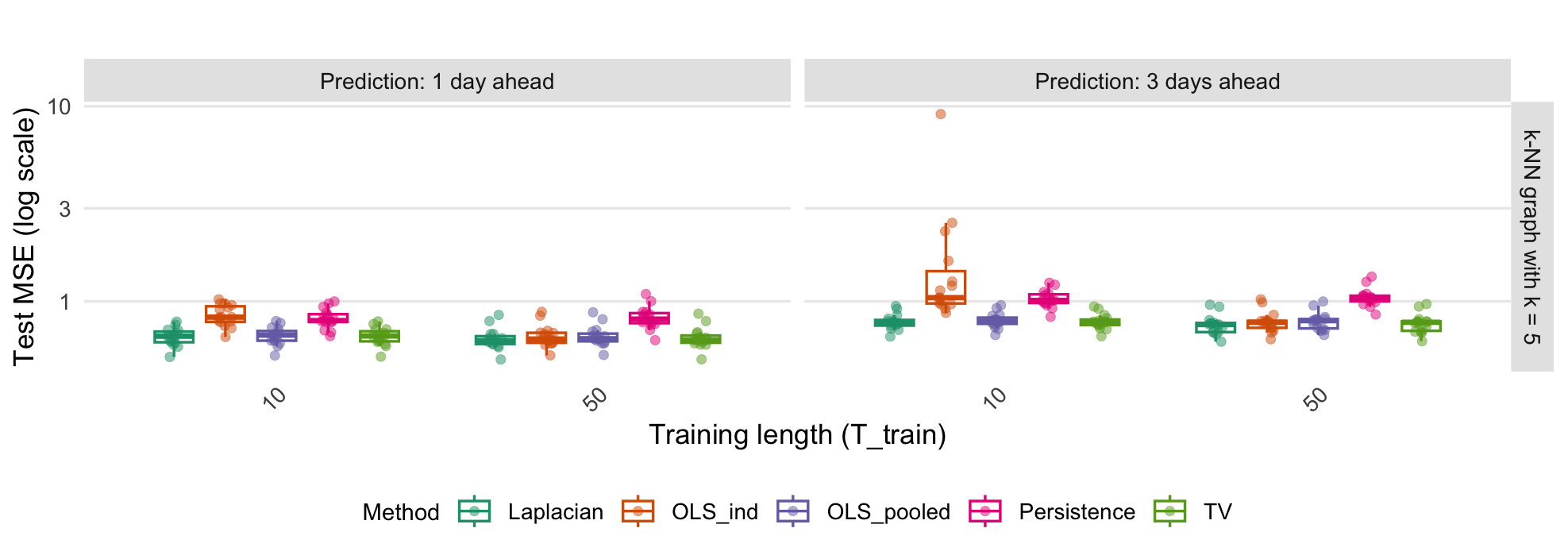}
    \subcaption{Test MSE: NO}
  \end{subfigure}
  \caption{Coefficient stability and test RMSE for predicting NO concentration. Boxplots show the distribution of errors over 15 independent trials (years).}
  \label{fig:results_h1}
\end{figure}
\vspace{-0.3cm}
\begin{figure}[H]
  \centering
  \begin{subfigure}{0.72\textwidth}
    \centering
    \includegraphics[width=\linewidth]{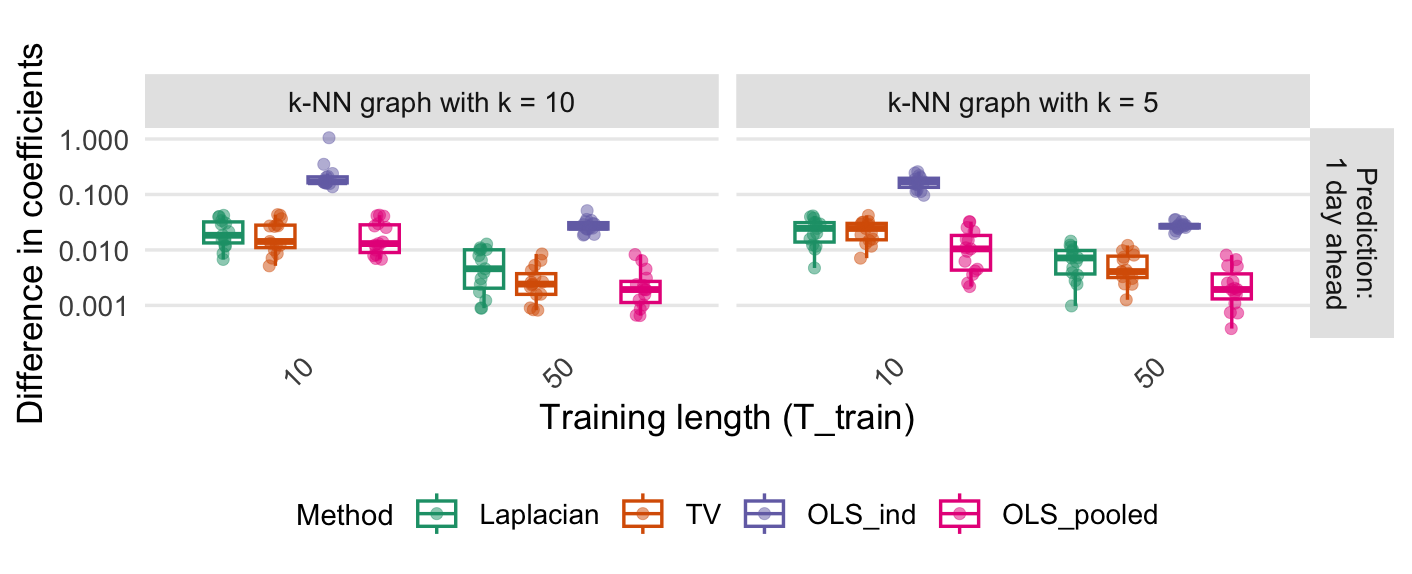}
    \subcaption{Coefficient Stability: PM10 }
  \end{subfigure}
  \hfill
  \begin{subfigure}{\textwidth}
    \centering
    \includegraphics[width=0.72\linewidth]{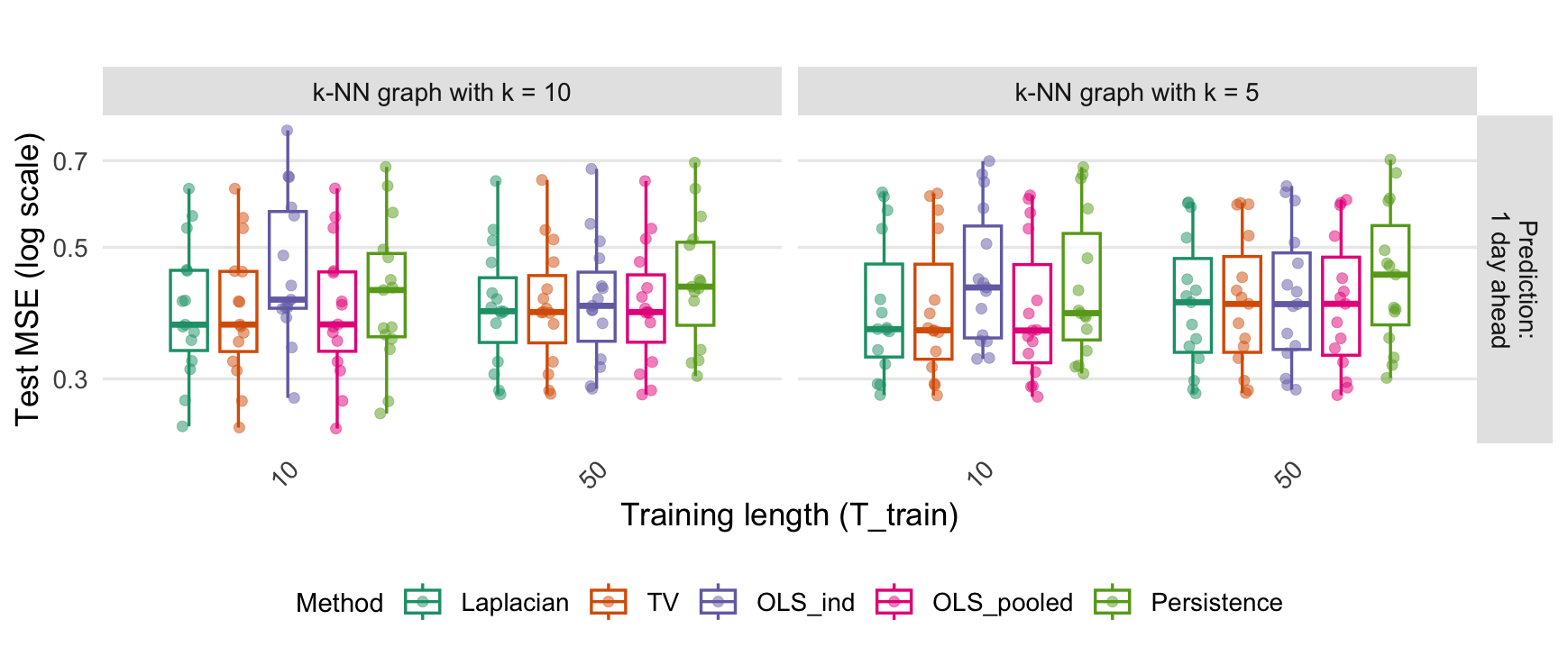}
    \subcaption{Test MSE: PM10}
  \end{subfigure}

  \caption{Coefficient stability and Test RMSE for predicting PM10 concentration.  Boxplots show the distribution of errors over 15 independent trials (years).}
  \label{fig:results_h1_pm}
\end{figure}

In this setting, both the Laplacian and graph total-variation (TV) denoisers outperform the alternative estimators, with performance between the Laplacian and TV being comparably close.

\begin{figure}[H]
  \centering

  \begin{subfigure}{0.49\textwidth}
    \centering
    \includegraphics[width=\linewidth]{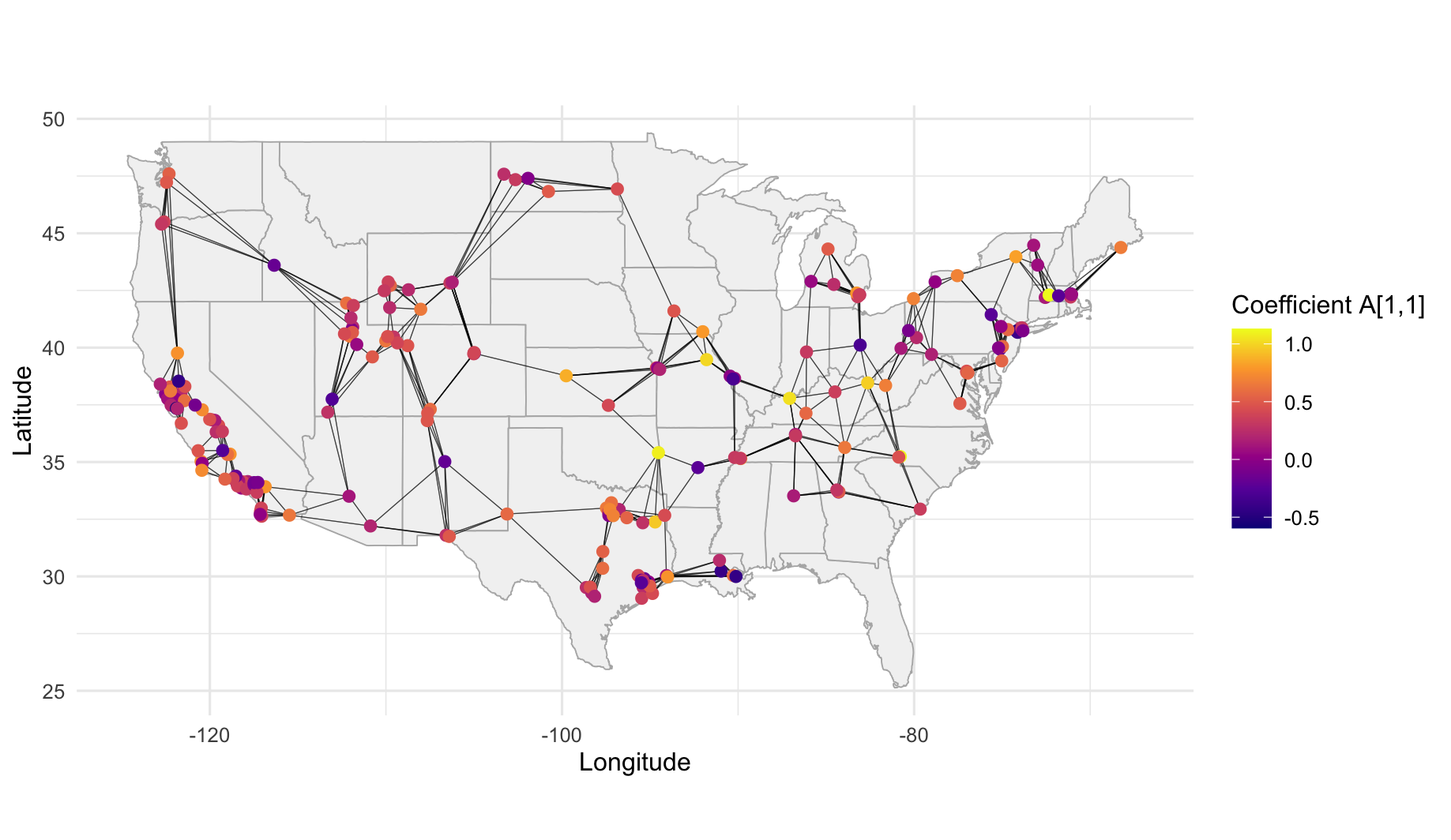}
    \subcaption{Individual OLS ( OLS\textsubscript{ind}) baseline}\label{fig:maps:ols_ind}
  \end{subfigure}
  \hfill
  \begin{subfigure}{0.49\textwidth}
    \centering
    \includegraphics[width=\linewidth]{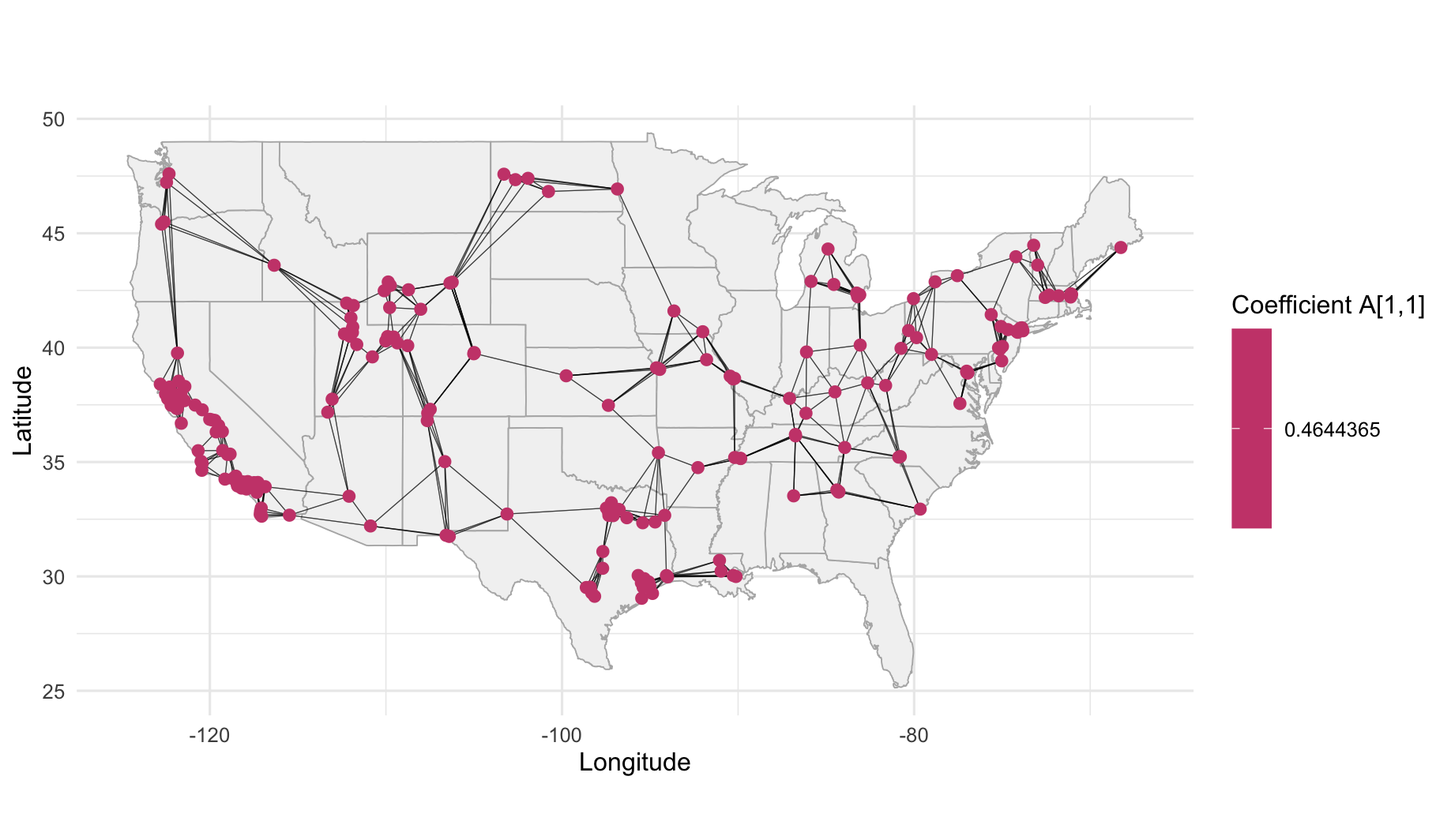}
    \subcaption{Pooled OLS ( OLS\textsubscript{pool}) benchmark}\label{fig:maps:ols_pool}
  \end{subfigure}

  \vspace{0.6em}

  \begin{subfigure}{0.49\textwidth}
    \centering
    \includegraphics[width=\linewidth]{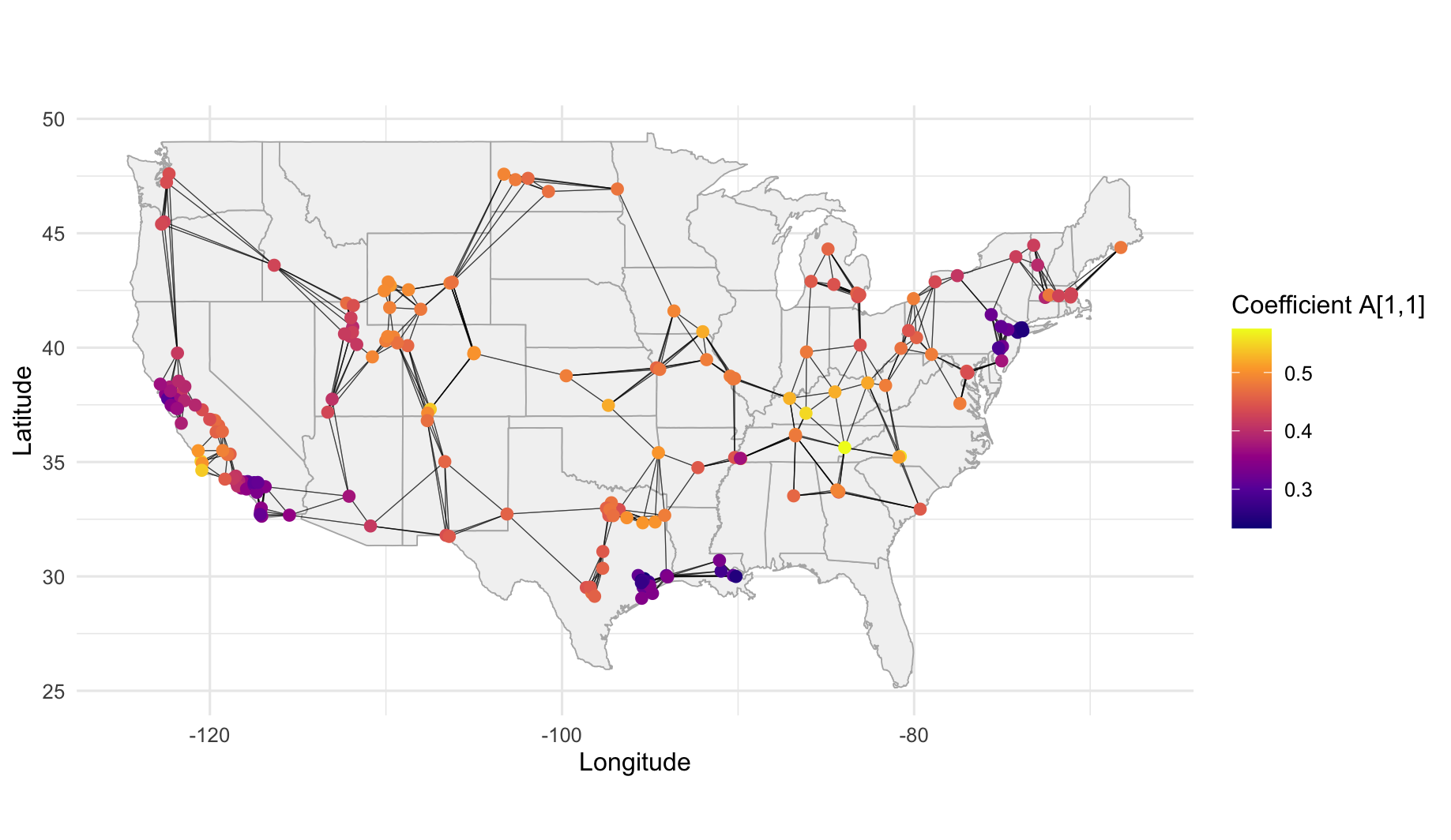}
    \subcaption{Laplacian denoiser}\label{fig:maps:lap}
  \end{subfigure}
  \hfill
  \begin{subfigure}{0.49\textwidth}
    \centering
    \includegraphics[width=\linewidth]{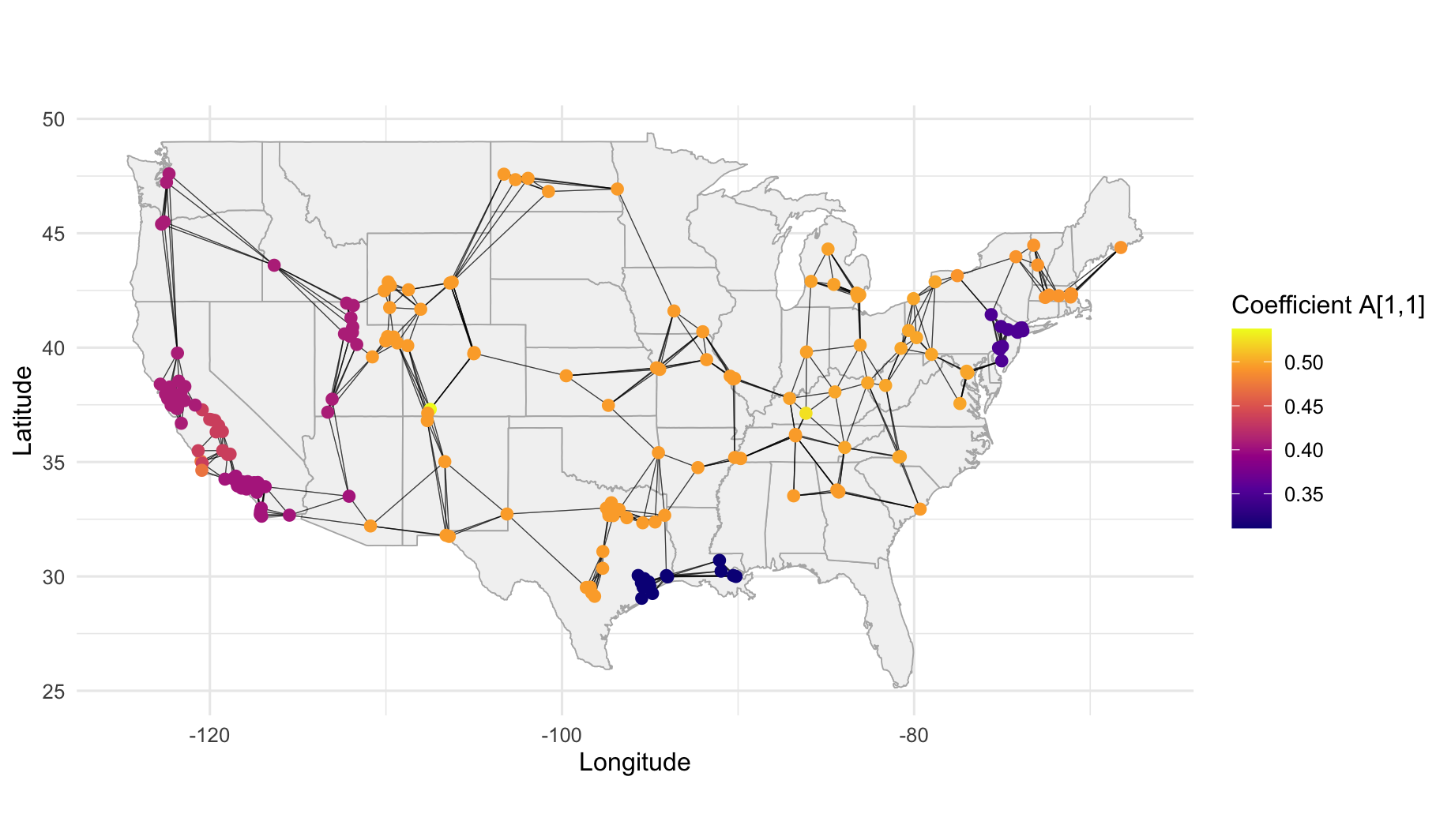}
    \subcaption{Graph TV denoiser}\label{fig:maps:tv}
  \end{subfigure}

  \caption{Monitoring stations colored by the value of the estimated LDS coefficient $\est{A}_l[1,1]$ under four methods. Nodes are stations; (optional) gray edges indicate the $k$-NN graph ($k=5$).}
  \label{fig:map_comp_coef}
\end{figure}


\begin{table}[]
\centering
\resizebox{\textwidth}{!}{%
\begin{tabular}{c|cc|cc|cc|}
\cline{2-7}
\textbf{} &
  \multicolumn{2}{c|}{\textbf{NO}} &
  \multicolumn{2}{c|}{\textbf{PM10 } } &
  \multicolumn{2}{c|}{\textbf{Temperature} }\\ \cline{2-7} 
\rowcolor[HTML]{EFEFEF} 
\cellcolor[HTML]{FFFFFF}\textbf{} &
  \multicolumn{1}{c|}{\cellcolor[HTML]{EFEFEF}\textbf{Stability}} &
  \textbf{Test RMSE} &
  \multicolumn{1}{c|}{\cellcolor[HTML]{EFEFEF}\textbf{Stability}} &
  \textbf{Test RMSE} &
  \multicolumn{1}{c|}{\cellcolor[HTML]{EFEFEF}\textbf{Stability}} &
  \textbf{Test RMSE} \\ \hline
\multicolumn{1}{|c|}{Persistence} &
  \multicolumn{1}{c|}{0  $\pm$ 0} &
  0.822 $\pm$ 0.0508 &
  \multicolumn{1}{c|}{0  $\pm$ 0} &
  0.431 $\pm$ 0.0626 &
  \multicolumn{1}{c|}{0  $\pm$ 0} &
  68.8 $\pm$ 18.4 \\ \hline
\rowcolor[HTML]{EFEFEF} 
\multicolumn{1}{|c|}{\cellcolor[HTML]{EFEFEF}OLS ind} &
  \multicolumn{1}{c|}{\cellcolor[HTML]{EFEFEF}0.214 $\pm$ 0.0144} &
  0.802 $\pm$ 0.0461 &
  \multicolumn{1}{c|}{\cellcolor[HTML]{EFEFEF}0.246 $\pm$ 0.116} &
  0.478 $\pm$ 0.0717 &
  \multicolumn{1}{c|}{\cellcolor[HTML]{EFEFEF}0.168 $\pm$ 0.0241} &
  84.9 $\pm$ 19.2 \\ \hline
\multicolumn{1}{|c|}{OLS Pooled} &
  \multicolumn{1}{c|}{0.00693 $\pm$ 0.00279} &
  0.673 $\pm$ 0.0391 &
  \multicolumn{1}{c|}{0.0198  $\pm$ 0.00676} &
  0.403 $\pm$ 0.0547 &
  \multicolumn{1}{c|}{0.0130 $\pm$ 0.00525} &
  69.4 $\pm$ 17.8 \\ \hline
\rowcolor[HTML]{EFEFEF} 
\multicolumn{1}{|c|}{\cellcolor[HTML]{EFEFEF}{\color[HTML]{000000} Laplacian}} &
  \multicolumn{1}{c|}{\cellcolor[HTML]{EFEFEF}{\color[HTML]{000000} 0.0148 $\pm$ 0.00292}} &
  {\color[HTML]{000000} 0.660 $\pm$ 0.0385} &
  \multicolumn{1}{c|}{\cellcolor[HTML]{EFEFEF}{\color[HTML]{000000} 0.0226 $\pm$ 0.00613}} &
  {\color[HTML]{000000} 0.404 $\pm$ 0.0546} &
  \multicolumn{1}{c|}{\cellcolor[HTML]{EFEFEF}{\color[HTML]{000000} 0.0237 $\pm$ 0.00576}} &
  {\color[HTML]{000000} 77.2 $\pm$ 17.7} \\ \hline
\multicolumn{1}{|c|}{TV} &
  \multicolumn{1}{c|}{0.0105 $\pm$ 0.00352} &
  0.666 $\pm$ 0.0391 &
  \multicolumn{1}{c|}{0.0209 $\pm$ 0.00653} &
  0.403 $\pm$ 0.0545 &
  \multicolumn{1}{c|}{0.0233 $\pm$ 0.00491} &
  71.1 $\pm$ 18.1 \\ \hline
\end{tabular}%
}
\caption{Mean test MSE ( $\pm 1.96 \times$ sd ) by method, for forecasting  pollutant one day ahead. Both NO and PM10 are fitted using the 10 nearest-neighbor graph, while TEMP uses a 5 nearest neighbor graph.}
\label{tab:tab:airquality}
\end{table}

\section*{Acknowledgment}
Authors are listed alphabetically. The work of O. Klopp was funded by the CY Initiative Grant Investissements d’Avenir
Agence Nationale de Recherche-16-Initiatives d’Excellence-0008 and Labex MME-DII Grant
ANR11-LBX-0023-01.
H.T was supported by a Nanyang Associate Professorship (NAP) grant from NTU Singapore. C.D. was supported by the National Science Foundation under Award Number 2238616, as well as the resources provided by the University of Chicago’s Research Computing Center.

\appendix
%
\section{Proof of Proposition \ref{prop:compatfac_bd}} \label{appsec:proof_compatfac_bd}
With a slight abuse of notation, we will sometimes identify the set $\calT_{\calE}$ as a subset of $\calE$, for convenience. This is clear since each $\set{l,l'} \in \calE$ can be identified by a unique label $j \in \abs{\calE}$, based on the ordering of the rows of $D$. 

Then for each $j \in \calT_{\calE}$, we can define the set $\calP_{j} \subseteq [d^2]$ as
\begin{equation*}
    \calP_{j} := \set{i \mod d^2: \ i \in \calT \ \text{and} \ (j-1)d^2 +1 \leq i \leq jd^2 },
\end{equation*}
%
%
where $\calP_j$ will be identified\footnote{Essentially, for each edge $j \in \calT_{\calE}$ which ``appears'' in $\calT$, we want to identify (through $\calP_j$) the set of coordinates that are ``attached'' to $\calT_{\calE}$. Alternatively, the set $\calT$ is completely specified by: (a)$\calT_{\calE}$ and, (b) the collection $(\calP_j)_{j \in \calT_{\calE}}$} by $\calP_{\set{l,l'}}$. Now for $\theta \in \matR^{md^2}$ formed by column-stacking $\theta_1,\dots,\theta_m \in \matR^{d^2}$, we obtain the bound 
%
\begin{align*}
    \norm{(\Dtil \theta)_{\calT}}_1 &= \sum_{\set{l,l'} \in \calT_{\calE}} \sum_{i \in \calP_{\set{l,l'}}} \abs{(\theta_l)_i - (\theta_{l'})_i}
    \\
    &\leq \sqrt{\abs{\calT}}\sqrt{\sum_{\set{l,l'} \in \calT_{\calE}} \sum_{i \in \calP_{\set{l,l'}}} \abs{(\theta_l)_i - (\theta_{l'})_i}^2} \\
    &\leq \sqrt{\abs{\calT}}\sqrt{2\sum_{\set{l,l'} \in \calT_{\calE}} \sum_{i \in \calP_{\set{l,l'}}}  ((\theta_l)_i^2 + (\theta_{l'})_i^2)} \\
    &\leq \sqrt{\abs{\calT}} \sqrt{2 \sum_{\set{l,l'} \in \calT_{\calE}}(\norm{\theta_l}_2^2 + \norm{\theta_{l'}}_2^2)} \\
    &\leq \sqrt{\abs{\calT}} \min\set{\sqrt{2\triangle}, 2\sqrt{\abs{\calT_{\calE}}}} \norm{\theta}_2,
\end{align*}
which completes the proof.

%
%
\section{Proof of Corollary \ref{cor:stable-main}} \label{subsec:proof_maincorr_stab}
We bound $\zeta_1$ and $\zeta_2$ (recall Lemma's \ref{lem:l2_bd_term1} and \ref{lem:linf_bd_term2}). Using 
\(\lVert A^{k}\rVert_{2}\le\rho_{\max}^{k}\) and
\(\lVert X\rVert_{F}\le\sqrt{d}\,\lVert X\rVert_{2}\), we obtain
	\begin{align*}
		\operatorname{Tr}\!\bigl(\Gamma_{t}(A^{\!*}_{l})\bigr)
		&= \sum_{k=0}^{t}\lVert A^{\!*k}_{l}\rVert_{F}^{2}
		\;\le\; d \sum_{k=0}^{t}\rho_{\max}^{2k}
		\;\le\; \frac{d}{1-\rho_{\max}^{2}}.
	\end{align*}
	For any canonical basis vector \(e_i\),
	\begin{align} \label{eq:corr_schur_stab_temp1}
		e_{i}^{\!\top}\Gamma_{t-1}(A^{\!*}_{l})e_{i}
		&=\sum_{k=0}^{t-1}\lVert A^{\!*k}_{l}e_{i}\rVert_{2}^{2}
		\;\le\;\sum_{k=0}^{t-1}\lVert A^{\!*k}_{l}\rVert_{2}^{2}
		\;\le\; \frac{1}{1-\rho_{\max}^{2}}, \quad \forall \ l \in [m].
	\end{align}
	We now get the following upper bounds  for $\zeta_{1}$ and $\zeta_{2}$. Using
	\[
	\sum_{l=1}^{m}\sum_{t=0}^{T-1}\operatorname{Tr}\!\bigl(\Gamma_{t}(A^{\!*}_{l})\bigr)
	\;\le\; mT\;\frac{d}{1-\rho_{\max}^{2}},
	\]
we obtain the bound
	\begin{equation}\label{bd_zeta_1_stable}
			\zeta_{1}(m,T,\delta)
			\;\;\le\;\;
			\frac{c_{1}\,m\,d\,T}{1-\rho_{\max}^{2}}
			\,\log\!(1/\delta)\;
	\end{equation}
	On the other hand, \eqref{eq:corr_schur_stab_temp1} readily implies
	\begin{equation}\label{bd_zeta_2_stable}
			\zeta_{2}(m,T,\delta)
			\;\;\le\;\;
			\frac{c_{1}\,\mu^{2}\,T}{1-\rho_{\max}^{2}}
			\,\log^{2}\!\Bigl(\tfrac{d^{2}|\calE|}{\delta}\Bigr).
	\end{equation}
	Now we bound $\beta:= \max_{l} \norm{\Atil^*_l}_2$. This is done using \cite[Lemma 5]{Jedra20}, which in our notation implies  
\begin{align*}
    \norm{\Atil^*_l}_2 \leq \sum_{t=0}^{T-1}  \norm{(A^*_l)^t}_2 \leq \sum_{t=0}^{T-1} \norm{A^*_l}_2^t \leq \sum_{t=0}^{\infty}  \norm{A^*_l}_2^t \leq \frac{1}{1-\rho_{\max}}.
\end{align*}
Hence we have $\beta\leq \frac{1}{1-\rho_{\max}}$.
    %
    %
    %
%
%
Using \eqref{bd_zeta_1_stable} and \eqref{bd_zeta_2_stable}, there exist constants $c, c', c'', c_1 > 1$ and $c_2 \in (0,1)$ such that for $\delta \in (0,c_2)$, we can respectively bound $F_1$, $F_2$ as follows.
\begin{align*}
    F_1 
    &\leq \sqrt{2}\Big(\frac{c_{1}\,d\,T}{1-\rho_{\max}^{2}}
			\,\log\!(1/\delta) + 1 \Big)^{1/2} \Big( \log(1/\delta) + \frac{d^2}{2} \log\Big(\frac{c_{1}\,d\,T}{1-\rho_{\max}^{2}}
			\,\log\!(1/\delta) + 1 \Big)\Big)^{1/2} \\
    &\leq c \Big(\frac{d\,T}{1-\rho_{\max}^{2}}
			\,\log\!(1/\delta) \Big)^{1/2} \Big( d^2\log(1/\delta) + d^2 \log\Big(\frac{c\,d\,T}{1-\rho_{\max}^{2}}
			\,\log\!(1/\delta) + 1 \Big)\Big)^{1/2}  \\
    &\leq c' \Big(\frac{d^3 \,T}{1-\rho_{\max}^{2}}
			\,\log\!(1/\delta) \Big)^{1/2} \log^{1/2}\Big(\frac{c\,d\,T}{\delta(1-\rho_{\max}^{2})}
			 \Big)  \\
    &\leq c'' \Big(\frac{d^3 \,T}{1-\rho_{\max}^{2}}
			\Big)^{1/2} \log\Big(\frac{d\,T}{\delta(1-\rho_{\max}^{2})}
			 \Big).
\end{align*}
Furthermore, 
\begin{align*}
    F_2 \leq c \zeta_2^{1/2}(m,T, \delta)  \leq c_1 \frac{\mu \sqrt{T}}{\sqrt{1-\rho_{\max}^2}} \log\left( \frac{d\abs{\calE}}{\delta}\right).
\end{align*}
The statement of the corollary now follows in a straightforward manner.
%
%
%

%
%
\section{Some technical tools}

\subsection{Talagrand's functionals and Gaussian width} \label{appsec:tal_prelim}
Recall Talagrand's $\gamma_{\alpha}$ functionals \cite{talagrand2014upper} which can be interpreted as a measure of the complexity of a (not necessarily convex) set.
\begin{definition}[\cite{talagrand2014upper}] \label{def:gamma_fun}
Let $(\calT,d)$ be a metric space. We say that a sequence of subsets of $\calT$, namely $(\calT_r)_{r \geq 0}$ is an admissible sequence if $\abs{\calT_0} = 1$ and $\abs{\calT_r} \leq 2^{2^{r}}$ for every $r \geq 1$. Then for any $0 < \alpha < \infty$, the $\gamma_{\alpha}$ functional of $(\calT, d)$ is defined as 
\begin{equation*}
    \gamma_{\alpha}(\calT, d) := \inf \sup_{t \in \calT} \sum_{r=0}^{\infty} 2^{r/\alpha} d(t, \calT_r)
\end{equation*}
with the infimum being taken over all admissible sequences of $\calT$.  
\end{definition}
The following properties of the $\gamma_{\alpha}$ functional are useful to note.
\begin{enumerate}
\item It follows directly from Definition \ref{def:gamma_fun} that for any two metrics $d_1, d_2$ such that $d_1 \leq a d_2$ for some $a > 0$,  we have
that $\gamma_{\alpha}(\calT, d_1) \leq a \gamma_{\alpha}(\calT, d_2)$.

\item For $\calT' \subset \calT$, we have that $\gamma_{\alpha}(\calT', d) \leq C_{\alpha} \gamma_{\alpha}(\calT, d)$ for $C_{\alpha} > 0$ depending only on $\alpha$.

\item If $f: (\calT, d_1) \rightarrow (\calU, d_2)$ is onto and for some constant $L$ satisfies
\begin{equation*}
    d_2(f(x), f(y)) \leq L d_1(x,y) \quad \text{for all } x,y \in \calT,
\end{equation*}
then $\gamma_{\alpha}(\calU, d_2) \leq C_{\alpha} L \gamma_{\alpha}(\calT, d_1)$ for $C_{\alpha} > 0$ depending only on $\alpha$.
\end{enumerate}
Properties $2$ and $3$ are stated in \cite[Theorem 1.3.6]{tal05} for the $\gamma_{\alpha}$ functional defined in \cite[Definition 2.2.19]{talagrand2014upper} with $C_{\alpha} = 1$. However, $\gamma_{\alpha}$ as in Definition \ref{def:gamma_fun} is equivalent to that in \cite[Definition 2.2.19]{talagrand2014upper} up to a constant depending only on $\alpha$; see \cite[Section 2.3]{talagrand2014upper}.
\begin{remark}
  Note that  \cite[Theorem 1.3.6]{tal05} is stated for another version of $\gamma_\alpha$ functional (see \cite[Definition 2.2.19]{talagrand2014upper}) which is equivalent to the one in Definition \ref{def:gamma_fun} up to a constant $C_\alpha$ depending only on $\alpha$ (see \cite[Section 2.3]{talagrand2014upper}).  
\end{remark}
%
%
%
%

By Talagrand's majorizing measure theorem \cite[Theorem 2.4.1]{talagrand2014upper}, the expected suprema of centered Gaussian processes $(X_s)_{s \in \calS}$ are characterized by $\gamma_2(\calS,d)$ as
\begin{equation} \label{eq:talag_maj_meas_thm}
  c \gamma_2(\calS,d) \leq \expec \sup_{s \in \calS}  X_s \leq C \gamma_2(\calS,d)
\end{equation}
for universal constants $c, C > 0$, with $d$ denoting the canonical distance $d(s,s') := (\expec[X_s - X_{s'}]^2)^{1/2}$. For instance, if $\calS \subset \matR^{n \times m}$, and $X_s = \dotprod{G}{s}$ for a $n \times m$ matrix $G$ with iid standard Gaussian entries, we obtain $d(s,s') = \norm{s - s'}_F^2$. Then \eqref{eq:talag_maj_meas_thm} implies $\expec \sup_{s \in \calS} \dotprod{G}{s} \asymp \gamma_2(\calS, \norm{\cdot}_F)$. Here, $\expec \sup_{s \in \calS} \dotprod{G}{s}$ is known as the \emph{Gaussian width} of the set $\calS$, denoted as $w(\calS)$. For an overview of the properties of the Gaussian width, the reader is referred to \cite{HDPbook}.

%
%
\subsection{Suprema of second order subgaussian chaos processes} \label{appsec:sup_chaos_krahmer}
For a 
set of matrices $\calA$, define the terms 
\begin{equation} \label{eq:rad_frob_spec}
d_F(\calA) := \sup_{A \in \calA} \norm{A}_F, \quad d_2(\calA) := \sup_{A \in \calA} \norm{A}_2.
\end{equation}
These can be interpreted as other types of complexity measures of the set $\calA$. The following result from \cite{krahmer14} is a concentration bound for the suprema of second order subgaussian chaos processes involving positive semidefinite (p.s.d) matrices. 
\begin{theorem}[\cite{krahmer14}] \label{thm:krahmer_chaos}
Let $\calA$ be a set of matrices and $\xi$ be a vector whose entries are independent, zero-mean, variance $1$, and are $L$-subgaussian random variables. Denote
\begin{align*}
F &= \gamma_2(\calA, \norm{\cdot}_2) [\gamma_2(\calA, \norm{\cdot}_2) + d_F(\calA)] + d_F(\calA) d_2(\calA) \\
G &= d_2(\calA)[\gamma_2(\calA, \norm{\cdot}_2) + d_F(\calA)], \quad \text {and} \quad H = d_2^2(\calA)    
\end{align*}
where $d_2, d_F$ are as in \eqref{eq:rad_frob_spec}. Then there exist constants $c_1,c_2 > 0$ depending only on $L$ such that for any $t > 0$ it holds that 
\begin{equation*}
    \prob\left(\sup_{A \in \calA} \abs{\norm{A \xi}_2^2 - \expec[\norm{A \xi}_2^2]} \geq c_1 F + t \right) \leq 2\exp\left(-c_2 \min \set{\frac{t^2}{G^2}, \frac{t}{H}} \right).
\end{equation*}
\end{theorem}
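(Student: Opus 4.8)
Since Theorem~\ref{thm:krahmer_chaos} is quoted verbatim from \cite{krahmer14}, within this paper it is used as a black box and no proof is reproduced; the plan one would follow to establish it from scratch is the following. Fix an anchor point $A_0\in\calA$ and consider the centered second-order chaos process
\[
X_A \;:=\; \norm{A\xi}_2^2 - \expec\norm{A\xi}_2^2 \;=\; \xi^\top A^\top A\,\xi - \expec\bigl[\xi^\top A^\top A\,\xi\bigr], \qquad A\in\calA.
\]
Everything hinges on an increment estimate: for $A,B\in\calA$ the difference $X_A-X_B$ is a centered quadratic form in the $L$-subgaussian vector $\xi$, so the Hanson--Wright inequality gives, for $u>0$,
\[
\prob\bigl(\abs{X_A-X_B}\ge u\bigr)\;\le\;2\exp\!\Bigl(-c\,\min\Bigl\{\tfrac{u^2}{\norm{A^\top A-B^\top B}_F^2},\ \tfrac{u}{\norm{A^\top A-B^\top B}_2}\Bigr\}\Bigr).
\]
One then controls the two ``difference norms'' by decomposing $A^\top A-B^\top B = A^\top(A-B)+(A-B)^\top B$ and using $\norm{XY}_F\le\norm{X}_2\norm{Y}_F$, which yields both $\norm{A^\top A-B^\top B}_F\lesssim d_2(\calA)\norm{A-B}_F$ and the mixed bound $\norm{A^\top A-B^\top B}_F\lesssim d_F(\calA)\norm{A-B}_2$, together with $\norm{A^\top A-B^\top B}_2\lesssim d_2(\calA)\norm{A-B}_2$.

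The second step is to feed these increments into a generic chaining bound for processes with a \emph{mixed} (partly subgaussian, partly subexponential) tail, the version going back to Talagrand \cite{talagrand2014upper}. With the estimates above, $(X_A)_{A\in\calA}$ is subgaussian in the rescaled metric $d_2(\calA)\norm{\cdot}_F$ and subexponential in $d_2(\calA)\norm{\cdot}_2$, so the chaining bound gives, with probability at least $1-2e^{-s}$,
\[
\sup_{A\in\calA}\abs{X_A-X_{A_0}}\;\lesssim\;d_2(\calA)\,\gamma_2(\calA,\norm{\cdot}_F)\;+\;d_2(\calA)\,\gamma_1(\calA,\norm{\cdot}_2)\;+\;\sqrt{s}\,d_2(\calA)d_F(\calA)\;+\;s\,d_2^2(\calA),
\]
the first term being the subgaussian chaining contribution, the second the subexponential one, and the last two coming from the two diameters (which are $\lesssim d_2(\calA)d_F(\calA)$ and $\lesssim d_2^2(\calA)=H$ respectively). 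Handling $\abs{X_{A_0}}$ separately by Hanson--Wright contributes at these same two scales, and one already sees a tail bound of the announced shape with $G=d_2(\calA)\bigl[\gamma_2(\calA,\norm{\cdot}_2)+d_F(\calA)\bigr]$ as the subgaussian scale and $H=d_2^2(\calA)$ as the subexponential scale.

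The remaining, and most delicate, step is to absorb the deterministic chaining contributions into the single constant-probability quantity $F=\gamma_2(\calA,\norm{\cdot}_2)\bigl[\gamma_2(\calA,\norm{\cdot}_2)+d_F(\calA)\bigr]+d_F(\calA)d_2(\calA)$. For the $\gamma_1$ piece I would re-run the chaining with the \emph{other} increment bound $\norm{A^\top A-B^\top B}_F\lesssim d_F(\calA)\norm{A-B}_2$, so that the Frobenius chaining functional plays the subexponential role; this produces contributions of order $\gamma_2(\calA,\norm{\cdot}_2)\bigl(\gamma_2(\calA,\norm{\cdot}_2)+d_F(\calA)\bigr)$. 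For the $\gamma_2(\calA,\norm{\cdot}_F)$ term one invokes the standard comparison bounding it by $\gamma_2(\calA,\norm{\cdot}_2)$ up to lower-order terms. Assembling these reproduces $F$, and the hierarchy $F\gtrsim G\gtrsim H$ falls out along the way.

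I expect the main obstacle to be exactly this mixed-metric bookkeeping: two metrics must be carried through the chaining in parallel, and the reason the $F,G,H$ hierarchy comes out as stated is the structural fact that the ``bad'' operator-norm factor enters \emph{multiplicatively with a radius} rather than being squared. By contrast, the Hanson--Wright increment step, the centering at $A_0$, and the homogeneity of the $\gamma_\alpha$ functionals under rescaling of the metric are all routine.
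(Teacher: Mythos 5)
Since the paper imports this statement verbatim from \cite{krahmer14} and never proves it, there is no internal proof to compare against; you are right to treat it as a black box, so the only question is whether your sketch of the external argument would actually work. It would not, as written. The chaining route you describe (Hanson--Wright on the quadratic increments $X_A-X_B$, then mixed-tail generic chaining) delivers a bound of the form $d_2(\calA)\,\gamma_2(\calA,\norm{\cdot}_F)+d_2(\calA)\,\gamma_1(\calA,\norm{\cdot}_2)$ plus diameter terms, and your plan to absorb this into $F$ rests on ``the standard comparison bounding $\gamma_2(\calA,\norm{\cdot}_F)$ by $\gamma_2(\calA,\norm{\cdot}_2)$ up to lower-order terms.'' That comparison goes in the wrong direction: since $\norm{X}_2\le\norm{X}_F$, one has $\gamma_2(\calA,\norm{\cdot}_2)\le\gamma_2(\calA,\norm{\cdot}_F)$, and there is no general reverse inequality (the gap can be as large as a dimension factor). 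Likewise, re-running the chaining with the increment bound $\norm{A^\top A-B^\top B}_F\lesssim d_F(\calA)\norm{A-B}_2$ only trades one functional for another of the form $d_F(\calA)\,\gamma_1(\calA,\norm{\cdot}_2)$ or $d_2(\calA)\,\gamma_2(\calA,\norm{\cdot}_F)$; neither manipulation produces the multiplicative quantity $\gamma_2(\calA,\norm{\cdot}_2)\bigl[\gamma_2(\calA,\norm{\cdot}_2)+d_F(\calA)\bigr]$. The whole point of the Krahmer--Mendelson--Rauhut theorem is precisely that it improves on the classical chaos bound $\gamma_2(\calA,\norm{\cdot}_F)+\gamma_1(\calA,\norm{\cdot}_2)$, so any proof that first passes through those functionals and then tries to convert them cannot succeed.

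The actual argument has a different structure: one never applies Hanson--Wright to the increments of the chaos. Instead one chains the \emph{first-order} (vector-valued) process $A\mapsto A\xi$ in the operator-norm metric, using that $\norm{(A-B)\xi}_2$ concentrates around $\norm{A-B}_F$ with subgaussian deviations at scale $\norm{A-B}_2$; this yields $\sup_{A\in\calA}\norm{A\xi}_2\lesssim \gamma_2(\calA,\norm{\cdot}_2)+d_F(\calA)+\sqrt{t}\,d_2(\calA)$ with probability $1-e^{-t}$. The quadratic increments are then written as products of linear ones, $\norm{A\xi}_2^2-\norm{B\xi}_2^2=\langle (A-B)\xi,(A+B)\xi\rangle$, and it is this product structure, combined with the first-order bound, that generates the terms $F$, $G=d_2(\calA)[\gamma_2(\calA,\norm{\cdot}_2)+d_F(\calA)]$ and $H=d_2^2(\calA)$, with the operator norm entering only multiplicatively against a radius. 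Your diagnosis of the diameter/tail-scale terms is fine, and the Hanson--Wright step is correct but plays only an auxiliary role (for a single fixed matrix); the missing idea is the chaining at the level of the linear process rather than the chaos itself.
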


%
\section{Proof of Proposition \ref{prop:set_incl_basu}} \label{appsec:proof_lem_set_incl}
For any $h \in \calC_{\calS} \cap \unitsph$, we have for $u(h) = \Dtil h$ that 
\begin{align*}
\norm{u(h)}_1 
= \norm{\Dtil h}_1 
    &= \norm{(\Dtil h)_{\calS}}_1 + \norm{(\Dtil h)_{\calS^c}}_1 \\ 
    &\leq 4\norm{(\Dtil h)_{\calS}}_1 + \rev{\frac{4}{r}}\norm{\Dtilacomp}_1 + 1 \tag{since $h \in \calC_{\calS} \cap \unitsph$} \\
    &\leq 4\frac{\sqrt{\abs{\calS}}}{\kappa_{\calS}} \norm{h}_2 + \rev{\frac{4}{r}}\norm{\Dtilacomp}_1 + 1 \tag{Recall Definition \ref{def:invfac_compat}} \\
    &= 4\frac{\sqrt{\abs{\calS}}}{\kappa_{\calS}} + \rev{\frac{4}{r}}\norm{\Dtilacomp}_1 + 1 \tag{since $h \in \unitsph$}.
\end{align*}

%
\section{Additional details from Section \ref{subsec:proof_l2_bd_term1}} \label{appsec:self_norm_args}
Denote the sigma algebra $\calF_t := \sigma(\eta_{l,1}, \dots,\eta_{l,t})_{l=1}^m$, we then obtain a filtration $(\calF_t)_{t=1}^\infty$. The idea now is to show the following result, analogous to that in \cite[Lemma 9]{abbasi11}.
\begin{lemma} \label{lem:self_norm_supermart}
    For any $u \in \matR^d$ and $t \geq 1$, consider
    \begin{equation*}
        M_t^u := \exp\left(\sum_{s=1}^t \sum_{l=1}^m \Big[\dotprod{u}{x_{l,s} \otimes \eta_{l,s+1}} - \frac{\norm{\mu}_{(x_{l,s} x_{l,s}^\top) \otimes I_d}^2}{2} \Big] \right)
    \end{equation*}
    Let $\bar{\tau}$ be a stopping time  w.r.t $(\calF_t)_{t=1}^\infty$. Then $M_{\bar{\tau}}^u$ is well defined and $\expec[M_{\bar{\tau}}^u] \leq 1$.
\end{lemma}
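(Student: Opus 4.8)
The statement is a standard "self-normalized martingale" supermartingale lemma, and the plan is to mirror the argument of \cite[Lemma 9]{abbasi11} (and the analogous \cite[Proposition 1]{tyagi2024jointlearn}) in our notation. The key observation is that at each time step $s$, conditionally on $\calF_s$, the vectors $x_{l,s}$ (for $l \in [m]$) are $\calF_s$-measurable, while the innovations $(\eta_{l,s+1})_{l=1}^m$ are independent standard Gaussians, independent of $\calF_s$. Therefore, writing $D_s^u := \exp\!\left(\sum_{l=1}^m \left[\dotprod{u}{x_{l,s}\otimes\eta_{l,s+1}} - \tfrac12\norm{u}^2_{(x_{l,s}x_{l,s}^\top)\otimes I_d}\right]\right)$, one has $M_t^u = \prod_{s=1}^t D_s^u$, and the plan is to show $\expec[D_s^u \mid \calF_s] = 1$, whence $(M_t^u)_{t\ge 1}$ is a nonnegative martingale with $\expec[M_t^u]=1$ for all finite $t$.

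The first step is the conditional-expectation computation. Fix $s$ and condition on $\calF_s$. Since $\dotprod{u}{x_{l,s}\otimes\eta_{l,s+1}} = \dotprod{(x_{l,s}^\top \otimes I_d)^\top u}{\eta_{l,s+1}}$ — more precisely, reshaping $u$ appropriately, this is a linear functional $\langle v_{l,s}, \eta_{l,s+1}\rangle$ of the Gaussian vector $\eta_{l,s+1}$ where $\norm{v_{l,s}}_2^2 = \norm{u}^2_{(x_{l,s}x_{l,s}^\top)\otimes I_d}$ (this identity is just the Kronecker-product bookkeeping $\norm{(x_{l,s}^\top\otimes I_d)^\top u}_2^2 = u^\top((x_{l,s}x_{l,s}^\top)\otimes I_d)u$) — the standard Gaussian moment generating function gives $\expec\!\left[\exp\langle v_{l,s},\eta_{l,s+1}\rangle\right] = \exp(\tfrac12\norm{v_{l,s}}_2^2)$. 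Using independence of $(\eta_{l,s+1})_{l=1}^m$ across $l$ and their independence from $\calF_s$, the product over $l$ factorizes, and the $\exp(\tfrac12\norm{v_{l,s}}_2^2)$ terms exactly cancel the deterministic correction terms, yielding $\expec[D_s^u\mid\calF_s]=1$. Then a telescoping / tower-property argument gives $\expec[M_t^u]=1$ for every fixed $t$.

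The second step is to pass from a fixed time to the stopping time $\bar\tau$, which may be infinite. Here one invokes the standard stopped-martingale (optional stopping / convergence) argument from \cite{abbasi11}: the stopped process $(M_{t\wedge\bar\tau}^u)_{t\ge1}$ is a nonnegative martingale, hence by the martingale convergence theorem it converges almost surely to a limit $M_{\bar\tau}^u$ (defining $M_\infty^u := \lim_{t\to\infty}M_t^u$ on the event $\{\bar\tau=\infty\}$, which exists a.s.), so $M_{\bar\tau}^u$ is well defined; and by Fatou's lemma, $\expec[M_{\bar\tau}^u] = \expec[\lim_t M_{t\wedge\bar\tau}^u] \le \liminf_t \expec[M_{t\wedge\bar\tau}^u] = 1$. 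This is exactly the reasoning in the proof of \cite[Lemma 9]{abbasi11}, so in the write-up I would state the conditional computation in detail and then refer to \cite{abbasi11} for the stopping-time passage.

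The main obstacle is essentially bookkeeping rather than conceptual: one must carefully track the Kronecker-product identities relating $\dotprod{u}{x_{l,s}\otimes\eta_{l,s+1}}$ to a Gaussian linear form with the correct variance $\norm{u}^2_{(x_{l,s}x_{l,s}^\top)\otimes I_d}$, and verify measurability (that $x_{l,s}$, which depends on $\eta_{l,1},\dots,\eta_{l,s}$ through the recursion \eqref{eq:mult_linsys}, is indeed $\calF_s$-measurable). Once these are in place the Gaussian MGF cancellation and the optional-stopping argument are routine. The lemma is then used downstream — via a Cram\'er–Chernoff / peeling argument over an ellipsoid-valued mixture of directions $u$, exactly as in \cite[Theorem 1]{abbasi11} — to deduce the self-normalized tail bound \eqref{eq:pitilbd_selfnorm} and its scalar counterpart \eqref{eq:vij_selfnorm}.
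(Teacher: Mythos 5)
Your proposal is correct and follows essentially the same route as the paper: factor $M_t^u=\prod_s D_s^u$, use the conditional (sub)Gaussian MGF of the innovations (with the Kronecker identity $\langle u, x_{l,s}\otimes\eta_{l,s+1}\rangle=\langle (x_{l,s}^\top\otimes I_d)u,\,\eta_{l,s+1}\rangle$ giving variance $u^\top((x_{l,s}x_{l,s}^\top)\otimes I_d)u$) to get $\expec[D_s^u\mid\calF_s]\le 1$, hence a nonnegative supermartingale with $\expec[M_t^u]\le 1$, and then pass to the stopping time via the stopped-martingale convergence and Fatou argument of Abbasi-Yadkori et al.'s Lemma 9 — exactly the steps the paper outlines (and defers for the stopping-time part).
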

\begin{proof}
    Denote 
    \begin{align*}
        D_t^u = \exp\left(\sum_{l=1}^m \Big[\dotprod{u}{x_{l,t} \otimes \eta_{l,t+1}} - \frac{\norm{\mu}_{(x_{l,t} x_{l,t}^\top) \otimes I_d}^2}{2} \Big] \right)
    \end{align*}
    Observe that $D_t^u$ and $M_t^u$ are both $\calF_{t+1}$ measurable. Hence it follows that 
    \begin{align*}
        \expec[D_t^u \vert \calF_t] &= \prod_{l=1}^m \expec\left[\exp\left(\sum_{l=1}^m \Big[\dotprod{u}{x_{l,t} \otimes \eta_{l,t+1}} - \frac{\norm{\mu}_{(x_{l,t} x_{l,t}^\top) \otimes I_d}^2}{2} \Big] \right) \Big\vert \calF_t \right] \\
        &\leq 1 \ \text{a.s}
    \end{align*}
    since $\eta_{l,t}$ has independent $1$-subgaussian entries (for each $l,t$). Using this bound, we obtain 
    \begin{align*}
        \expec[[D_t^u \vert \calF_t]] \leq D_1^u D_2^u \cdots D_{t-1}^u = M_{t-1}^u
    \end{align*}
    which implies $(M_t^u)_{t=1}^\infty$ is a super-martingale and also that $\expec[M_t^u] \leq 1$. Now one can show using the same arguments as presented in the proof of \cite[Lemma 9]{abbasi11} to show that $M_{\bar{\tau}}^u$ is well defined and $\expec[M_{\bar{\tau}}^u] \leq 1$; the details are omitted. 
\end{proof}
Equipped with Lemma \ref{lem:self_norm_supermart}, the bound in \eqref{eq:pitilbd_selfnorm} now follows in an analogous manner to that in the proof of \cite[Theorem 1]{abbasi11}.

\section{Control of the inverse scaling factor}\label{sec:control_inv_scaling_fac}
%
%
\subsection{2D grid} \label{subsec:invfac_2dgrid}
We introduce the following notation.	
	\begin{itemize}
		\item Let \(G_{N\times N} = (V,E)\) denote the \(N \times N\) two‑dimensional rectangular grid with vertex set
		\(V = \{0,\dots,N-1\}^2\).  Hence \(|V| = m = N^2\) and let $D_2$ be the 
		\emph{incidence matrix} of the 2D grid.
		\item We write the pseudoinverse of \(D_2\) as
		\[
		S \;=\; D_2^{\dagger}\;=\;L^{\dagger} D_2^\top,
		\]
		where \(L^{\dagger}\) is the pseudoinverse of \(L\).
	\end{itemize}
	
	It was shown in \cite{huetter16} that
	\[
	\max_{e \in E}\, \bigl\| S_{:,e} \bigr\|_2 \;\le\; C \sqrt{\log m},
	\]
	i.e.\ the \emph{column} norms grow only logarithmically.  
	We now bound the \emph{row} norms.
	
	
	\begin{lemma}\label{lem:row-diag}
		For any vertex \(v\in V\), the squared $\ell_2$ norm of the \(v\)-th row of \(S\) equals the \(v\)-th diagonal entry of \(L^{\dagger}\), i.e., 
		\[
		\bigl\| S_{v,:} \bigr\|_2^2 
		\;=\; (L^{\dagger})_{vv}.
		\]
	\end{lemma}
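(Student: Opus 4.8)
The plan is to compute the squared row norm through the Gram matrix $SS^{\top}$ and then collapse the expression using the defining identities of the Moore–Penrose pseudoinverse. First I would record the factorization $S = D_2^{\dagger} = L^{\dagger}D_2^{\top}$ already introduced in the excerpt; this itself is an instance of the standard identity $A^{\dagger} = (A^{\top}A)^{\dagger}A^{\top}$ applied to $A = D_2$, together with $L = D_2^{\top}D_2$. I would also note that $L^{\dagger}$ is symmetric, since $L$ is symmetric.

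Next, fix a vertex $v \in V$ and write $\bigl\|S_{v,:}\bigr\|_2^2 = e_v^{\top}SS^{\top}e_v$. Substituting $S = L^{\dagger}D_2^{\top}$ and using symmetry of $L^{\dagger}$ gives $SS^{\top} = L^{\dagger}D_2^{\top}D_2 L^{\dagger} = L^{\dagger}L L^{\dagger}$, and the Moore–Penrose relation $L^{\dagger}L L^{\dagger} = L^{\dagger}$ reduces this to $SS^{\top} = L^{\dagger}$. Hence $\bigl\|S_{v,:}\bigr\|_2^2 = e_v^{\top}L^{\dagger}e_v = (L^{\dagger})_{vv}$, which is exactly the claim.

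There is essentially no analytic obstacle here: the argument is purely linear-algebraic and uses nothing specific to the grid, so the identity in fact holds for the incidence matrix of any graph (the grid structure enters only afterwards, when this identity is combined with a Green's-function / effective-resistance estimate for $(L^{\dagger})_{vv}$ on $\mathbb{Z}^2$ to obtain the $\lesssim \log m$ bound on $\mu'$). The one place to be slightly careful is the choice of pseudoinverse identity: one must use $A^{\dagger} = (A^{\top}A)^{\dagger}A^{\top}$ so that the combinatorial Laplacian $L = D_2^{\top}D_2$ — and not $D_2 D_2^{\top}$ — appears in the computation; after that the simplification is immediate.
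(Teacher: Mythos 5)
Your proof is correct and follows essentially the same route as the paper: both start from the factorization $S = L^{\dagger}D_2^{\top}$, use $D_2^{\top}D_2 = L$ together with $L^{\dagger}LL^{\dagger} = L^{\dagger}$ (and symmetry of $L^{\dagger}$), and conclude $\|S_{v,:}\|_2^2 = e_v^{\top}L^{\dagger}e_v$. Packaging the computation as $SS^{\top} = L^{\dagger}$ is only a cosmetic difference from the paper's direct row-norm calculation.
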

	
	\begin{proof}
		Let \(e_v \in \mathbb{R}^{n}\) be the \(v\)-th canonical basis vector.
		Because \(S = L^{\dagger} D_2^\top\),
		\[
		S_{v,:}^\top 
		\;=\; e_v^\top L^{\dagger} D_2^\top,
		\quad\Longrightarrow\quad
		\| S_{v,:} \|_2^2
		\;=\;
		e_v^\top L^{\dagger} D_2^\top D_2 L^{\dagger} e_v.
		\]
		Since \(D_2^\top D_2 = L\) and \(L^{\dagger} L L^{\dagger} = L^{\dagger}\),
		\[
		\| S_{v,:} \|_2^2
		\;=\;
		e_v^\top L^{\dagger} e_v
		\;=\;
		(L^{\dagger})_{vv}.
		\]
	\end{proof}
	
	Consequently, an upper bound on
	\(\displaystyle \max_{v\in V} (L^{\dagger})_{vv}\)
	implies the desired row‑norm bound.
	
	\paragraph{One‑dimensional path.}
	Let \(L_1\) be the \(N\times N\) Laplacian of the path graph
	\(P_N\).
	It possesses the discrete cosine eigenbasis\footnote{In a slight abuse of notation, we write the eigenvalues here as $\lambda_0 \leq \lambda_1 \leq \cdots \leq \lambda_{N-1}$ for convenience.}
	\[
	L_1 = V_1 \Lambda_1 V_1^\top,
	\qquad
	\lambda_{k}
	= 2-2\cos\!\Bigl(\frac{k\pi}{N}\Bigr),
	\quad
	k = 0,\dots,N-1.
	\]
	For \(k\neq 0\), the entries of the $k$‑th eigenvector satisfy
	\(|(V_1)_{ik}| \le \sqrt{\tfrac{2}{N}}\);
	for \(k=0\), \((V_1)_{i0} = \tfrac1{\sqrt{N}}\) (see, e.g. \cite[Chapter 1.5]{strang2007computational}). 
	
	\paragraph{Two‑dimensional grid.}
	Using the (Kronecker) Cartesian product,
	\[
	L \;=\; L_1 \otimes I_N \;+\; I_N \otimes L_1
	\;=\; V_2 \Lambda_2 V_2^\top,
	\]
	where
	\(V_2 = V_1 \otimes V_1\)
	and
	\(\Lambda_2 = \Lambda_1 \otimes I_N + I_N \otimes \Lambda_1\). 
	Writing a vertex \(v\) as a pair \((i,j)\),
	the corresponding diagonal entry of \(L^{\dagger}\) expands to
	\begin{equation}\label{eq:diag-expand}
		(L^{\dagger})_{vv}
		\;=\;
		\sum_{\,\substack{k,\ell = 0 \\ (k,\ell)\neq (0,0)}}^{N-1}
		\frac{ \, (V_1)_{ik}^2 \, (V_1)_{j\ell}^2 }{\lambda_{k} + \lambda_{\ell}}.
	\end{equation}
	Indeed, let $\phi_{k\ell}$ denote the eigenvector and $\mu_{k\ell}$ the eigenvalue of $L$, we have
\begin{align*}
\mu_{k\ell} &:= \lambda_k + \lambda_\ell,
\qquad
\phi_{k\ell} := v_k \otimes v_\ell,
\qquad k,\ell \in \{0,\dots,N-1\}.
\end{align*}
Because $L$ is symmetric, its pseudoinverse is obtained by inverting the positive eigenvalues and leaving the zero mode at 0, i.e., 
\begin{align*}
L^\dagger
= V_2 \,\Lambda_2^\dagger\, V_2^\top
= \sum_{\substack{0 \le k,\ell \le N-1\\(k,\ell)\neq(0,0)}}
\frac{1}{\lambda_k + \lambda_\ell}\;\phi_{k\ell}\,\phi_{k\ell}^\top,
\qquad
(\Lambda_2^\dagger)_{k\ell,k\ell} :=
\begin{cases}
\dfrac{1}{\lambda_k+\lambda_\ell}, & (k,\ell)\neq(0,0),\\[4pt]
0, & (k,\ell)=(0,0), 
\end{cases}
\end{align*}
and
\begin{align*}
(L^\dagger)_{vv}
&= e_{(i,j)}^\top L^\dagger\, e_{(i,j)}
= \sum_{\substack{0 \le k,\ell \le N-1\\(k,\ell)\neq(0,0)}}
\frac{\big(\phi_{k\ell}(i,j)\big)^2}{\lambda_k + \lambda_\ell} \\[4pt]
&= \sum_{\substack{0 \le k,\ell \le N-1\\(k,\ell)\neq(0,0)}}
\frac{\big(v_k(i)\,v_\ell(j)\big)^2}{\lambda_k + \lambda_\ell}
= \sum_{\substack{0 \le k,\ell \le N-1\\(k,\ell)\neq(0,0)}}
\frac{(V_1)_{ik}^2\,(V_1)_{j\ell}^2}{\lambda_k + \lambda_\ell}.
\end{align*}
Note\footnote{Since $\sin x \asymp x$ for $x \in [0,\pi/2]$.} that \(\lambda_{k} = 4\sin^2(\pi k/(2N)) \asymp (\pi k / N)^2\) for \(k=0,1,\dots,N-1\),
	which implies \(\lambda_{k} \ge c\) (for some absolute \(c>0\)) once \(k \ge N/(2\pi)\). Hence, 
	we now split the double sum \eqref{eq:diag-expand} into ``low‑'' and ``high‑frequency''
	blocks. 
	
	\subsection*{Low frequencies: \(k,\ell \le \frac{N}{2\pi}\)}
	
	Using \(|(V_1)_{ik}|, |(V_1)_{j\ell}| \le \sqrt{\tfrac{2}{N}}\) and
	\(\lambda_{k} \asymp (\pi k/N)^2\),
	\[
	\sum_{\substack{1\le k,\ell \le N/(2\pi)}} 
	\frac{ (V_1)_{ik}^2 (V_1)_{j\ell}^2 }{\lambda_{k}+\lambda_{\ell}}
	\;\le\;
	\frac{4}{N^2}
	\sum_{\substack{1\le k,\ell \le N/(2\pi)}} 
	\frac{N^2/\pi^2}{k^2 + \ell^2}
	\;=\;
	\frac{4}{\pi^2}
	\sum_{k,\ell=1}^{{\lfloor N/(2\pi)\rfloor}} 
	\frac1{k^2+\ell^2}.
	\]
     Let $R:=\lfloor N/(2\pi)\rfloor$ and define
$\mathcal A_j:=\{(k,\ell)\in\mathbb N^2: 2^j<\sqrt{k^2+\ell^2}\le 2^{j+1}\},\ j=0,\dots,J:=\lfloor\log_2 R\rfloor$.
Then for $(k,\ell)\in\mathcal A_j, 1/(k^2+\ell^2)\le 2^{-2j}$, so
\[
\sum_{k=1}^{R}\sum_{\ell=1}^{R}\frac{1}{k^2+\ell^2}
\le \sum_{j=0}^{J}\frac{|\mathcal A_j|}{2^{2j}}.
\]
Since $|\mathcal A_j|\le (2^{j+1})^2=4\cdot 2^{2j}$, each shell contributes at most $4$, hence 
\[
\sum_{k=1}^{R}\sum_{\ell=1}^{R}\frac{1}{k^2+\ell^2}
\le 4(J+1)=O(\log R).
\]
Substituting into
\(
\frac{4}{\pi^2}\sum_{1\le k,\ell\le R}\frac{1}{k^2+\ell^2}
\)
gives the low-frequency bound \(O(\log N)\).
	
	\subsection*{High frequencies}
	
	If \(k\) or \(\ell\) exceeds \(N/(2\pi)\), then
	\(\lambda_{k} + \lambda_{\ell} \ge c\) for some
	absolute \(c>0\).
	Together with \(|(V_1)_{ik}|^2,(V_1)_{j\ell}|^2 \le 2/N\),
	the contribution is \(O(1)\).
	
	\medskip
	Combining both regions,
	\[
	(L^{\dagger})_{vv} \;\le\; C \log N
	\quad\Longrightarrow\quad
	\max_{v\in V}(L^{\dagger})_{vv} \;\le\; C \log N,
	\]
	for an absolute constant \(C\).
	This proves the following proposition.
	\begin{proposition}
	    [Bound on $\mu'$ for $2D$ grid]\label{prop:2D_grid}
		Let \(D_2\) be the incidence matrix of the \(N\times N\) two‑dimensional grid
		and \(S = D_2^{\dagger}\) its Moore–Penrose inverse.
		Then
		\[
		\boxed{ \;
			\max_{v \in V} \; \bigl\| S_{v,:} \bigr\|_{2}
			\;\le\;
			C \sqrt{\log {m}}
			\;} ,
		\qquad {m} = N^2,
		\]
		where \(C>0\) is an absolute constant.
	\end{proposition}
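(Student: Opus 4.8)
The plan is to reduce the row-norm bound to an estimate on the diagonal of $L^{\dagger}$, and then to control that diagonal through the explicit product eigenbasis of the grid.

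First I would invoke Lemma~\ref{lem:row-diag}, which gives $\|S_{v,:}\|_2^2 = (L^{\dagger})_{vv}$ for every $v\in V$, so that it suffices to show $\max_{v\in V}(L^{\dagger})_{vv}\lesssim \log m = 2\log N$. Writing $v=(i,j)$ and using the Cartesian-product decomposition $L=L_1\otimes I_N + I_N\otimes L_1$ with $L_1$ the path Laplacian --- whose eigenvalues are $\lambda_k = 4\sin^2(k\pi/(2N))$ and whose eigenvectors $v_k$ satisfy $\|v_k\|_\infty\le\sqrt{2/N}$ for $k\ne 0$ --- the Kronecker eigenbasis $v_k\otimes v_\ell$, with eigenvalues $\lambda_k+\lambda_\ell$, yields the expansion \eqref{eq:diag-expand}. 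Since the entrywise bound $|(V_1)_{ik}|\le\sqrt{2/N}$ does not depend on $i$, the resulting estimate will automatically be uniform over $v$.

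Next I would split the double sum \eqref{eq:diag-expand} according to frequency, with cutoff $\approx N/(2\pi)$, chosen so that on the low-frequency block $\sin(k\pi/(2N))$ stays comparable to its argument and hence $\lambda_k\asymp (k/N)^2$. For the high-frequency part, where $k$ or $\ell$ exceeds the cutoff, one has $\lambda_k+\lambda_\ell\gtrsim 1$, while each squared eigenvector entry is $\le 2/N$; summing over at most $N^2$ terms then contributes $O(1)$. For the low-frequency part, bounding $(V_1)_{ik}^2(V_1)_{j\ell}^2\le 4/N^2$ and $\lambda_k+\lambda_\ell\gtrsim (k^2+\ell^2)/N^2$ reduces the contribution to $\lesssim \sum_{k,\ell=1}^{R}\tfrac{1}{k^2+\ell^2}$ with $R=\lfloor N/(2\pi)\rfloor$; a dyadic decomposition of $\mathbb{N}^2$ into annuli $\mathcal{A}_j=\{2^j<\sqrt{k^2+\ell^2}\le 2^{j+1}\}$ --- each carrying $|\mathcal{A}_j|\lesssim 2^{2j}$ lattice points on which the summand is $\le 2^{-2j}$ --- bounds this by $O(\log R)=O(\log N)$. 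Adding the two blocks gives $(L^{\dagger})_{vv}\le C\log N$ uniformly in $v$, which is the claim.

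The main obstacle, though a mild one, is making the low-frequency comparisons $\lambda_k\asymp(k/N)^2$ and $\lambda_k+\lambda_\ell\gtrsim(k^2+\ell^2)/N^2$ rigorous with absolute constants over the whole low-frequency range; this is precisely why the cutoff is placed at $N/(2\pi)$ (so that the argument of $\sin$ stays below $1/4$, where $\sin x\asymp x$), and why the resulting lattice sum $\sum 1/(k^2+\ell^2)$ must be seen to be genuinely independent of $N$. Everything else is routine bookkeeping with the Kronecker structure and the known path-graph spectrum.
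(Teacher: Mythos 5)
Your proposal is correct and follows essentially the same route as the paper: reduce $\|S_{v,:}\|_2^2$ to $(L^{\dagger})_{vv}$ via Lemma~\ref{lem:row-diag}, expand in the Kronecker (path $\times$ path) eigenbasis, split at the cutoff $N/(2\pi)$, bound the low-frequency lattice sum $\sum 1/(k^2+\ell^2)$ by $O(\log N)$ via dyadic shells, and absorb the high-frequency block into an $O(1)$ term. No gaps to report.
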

%
%
\subsection{$n$-dimensional grid} \label{subsec:invfac_ndgrid}
We can extend the proof of Proposition \ref{prop:2D_grid} to the general case of a $n$-dimensional grid.
\begin{proposition}[Bound on $\mu'$ for n-dimensional grid]\label{prop:dD_grid}
  Let $G_{N,n}$ be the $N^n$ grid, $D_n$ its incidence, $L_n=D_n^\top D_n,
S=D_n^\dagger$. Then for some $C_n>0$ (dimension only) {where $C_n=O(2^n)$}, 
\[
\max_v \|S_{v,:}\|_2^2=(L_n^\dagger)_{vv}\le
\begin{cases}
C_1 N, & n=1,\\
C_2 \log N, & n=2,\\
C_n, & n \ge 3.
\end{cases}
\]  
\end{proposition}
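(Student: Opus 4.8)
The proof of Proposition~\ref{prop:dD_grid} follows the same template as that of Proposition~\ref{prop:2D_grid}, now carried out for a general dimension $n$. The starting point is the identity $\|S_{v,:}\|_2^2 = (L_n^\dagger)_{vv}$, which holds by exactly the same argument as in Lemma~\ref{lem:row-diag}, since $S = L_n^\dagger D_n^\top$ and $D_n^\top D_n = L_n$. Thus everything reduces to bounding $\max_v (L_n^\dagger)_{vv}$. Using the Cartesian-product structure $L_n = \sum_{r=1}^n I_N^{\otimes(r-1)} \otimes L_1 \otimes I_N^{\otimes(n-r)}$, whose eigenvectors are $n$-fold tensor products $v_{k_1}\otimes\cdots\otimes v_{k_n}$ of the path eigenvectors and whose eigenvalues are $\lambda_{k_1}+\cdots+\lambda_{k_n}$, we expand the diagonal entry as
\begin{equation*}
(L_n^\dagger)_{vv} = \sum_{\substack{0\le k_1,\dots,k_n\le N-1\\ (k_1,\dots,k_n)\neq 0}} \frac{\prod_{r=1}^n (V_1)_{i_r k_r}^2}{\lambda_{k_1}+\cdots+\lambda_{k_n}},
\end{equation*}
using the eigenvector bound $|(V_1)_{ik}|\le\sqrt{2/N}$ for $k\neq 0$ and $(V_1)_{i0}=1/\sqrt N$, together with $\lambda_k \asymp (k/N)^2$.

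The plan is to split the sum into a ``low-frequency'' block, where all $k_r \le N/(2\pi)$, and a ``high-frequency'' complement. On the low-frequency block, bounding each $(V_1)_{i_r k_r}^2 \le 2/N$ and $\lambda_{k_r}\asymp (\pi k_r/N)^2$ gives a bound of the form $C_n N^{-n}\sum_{1\le k_1,\dots,k_n\le R}\frac{N^2}{k_1^2+\cdots+k_n^2} = C_n N^{2-n}\sum_{1\le k_1,\dots,k_n\le R}\frac{1}{\|k\|_2^2}$ with $R=\lfloor N/(2\pi)\rfloor$. The inner sum is handled by the same dyadic-shell decomposition as in the $2$D case: partitioning $\{k:\ 2^j<\|k\|_2\le 2^{j+1}\}$, each shell has $O(2^{jn})$ lattice points and contributes $O(2^{j(n-2)})$ to $\sum 1/\|k\|_2^2$. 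Summing over $j=0,\dots,\lfloor\log_2 R\rfloor$ yields $O(1)$ when $n=1$ (geometric, decreasing), $O(\log R)=O(\log N)$ when $n=2$, and $O(R^{n-2})=O(N^{n-2})$ when $n\ge 3$. Multiplying by the prefactor $N^{2-n}$ gives $O(N)$, $O(\log N)$, and $O(1)$ respectively, matching the three cases; one also checks the $n-1$ mixed sub-blocks where some indices are zero (these only help, as a zero index removes a factor $2/N$ but also removes a term from the denominator, and are dominated by lower-dimensional versions of the same estimate). On the high-frequency complement, at least one $k_r > N/(2\pi)$ forces the denominator $\lambda_{k_1}+\cdots+\lambda_{k_n} \ge c$ for an absolute constant $c>0$, and then $\sum \prod_r (V_1)_{i_r k_r}^2 \le \prod_r \sum_{k_r}(V_1)_{i_r k_r}^2 = 1$ (orthonormality of columns of $V_1$), so this block contributes $O(1)$ uniformly.

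Combining the two blocks and taking the maximum over $v$ gives the stated bounds, with the dimension-dependent constant $C_n = O(2^n)$ tracking the number of dyadic-shell and sub-block contributions (each of the $n$ coordinate axes and the $2^n$ choices of which indices vanish contribute a bounded multiplicative factor). The main obstacle — though a mild one — is simply bookkeeping the mixed sub-blocks where a strict subset of the indices $k_r$ equals zero: one must verify that each such sub-block reduces to the same type of estimate in fewer effective dimensions and is therefore no larger than the main $n$-dimensional low-frequency term; this is where the explicit geometric/logarithmic/polynomial dichotomy in $n$ has to be tracked carefully so that the worst contribution is correctly identified as the full-dimensional one. Everything else is a routine adaptation of the $2$D argument already spelled out in Section~\ref{subsec:invfac_2dgrid}.
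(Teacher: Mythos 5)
Your proposal is correct and follows essentially the same route as the paper's proof: the identity $\|S_{v,:}\|_2^2=(L_n^\dagger)_{vv}$, the Kronecker-sum spectral expansion, the low/high-frequency split at $N/(2\pi)$, and the dyadic-shell estimate of the lattice sum $\sum_{\boldsymbol{k}\neq 0}\|\boldsymbol{k}\|_2^{-2}$ giving the $N$, $\log N$, constant trichotomy. The only cosmetic differences are that the paper absorbs the zero-coordinate ``mixed'' indices directly into the lattice sum over $\{0,\dots,R\}^n\setminus\{\mathbf 0\}$ rather than treating them as separate lower-dimensional sub-blocks, and it bounds the high-frequency block by counting $N^n$ terms with numerator $(2/N)^n$ instead of your (slightly cleaner) orthonormality argument.
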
 
\begin{proof}
 Lemma~7 implies that $\|S_{v,:}\|_2^2=(L_n^\dagger)_{vv}$ and we have $L_n=\sum_{r=1}^n I^{\otimes (r-1)}\otimes L_1\otimes I^{\otimes (n-r)}$
with $L_1=V_1\Lambda_1 V_1^\top$, $\lambda_k=2-2\cos(k\pi/N)$.
Then for $v=(i_1,\ldots,i_n)$,
\begin{equation}\label{prp_dDgrid_eq1}
  (L_n^\dagger)_{vv}=\sum_{\boldsymbol{k}\neq \mathbf 0}
\frac{\prod_{r=1}^n (V_1)_{i_r k_r}^2}{\sum_{r=1}^n \lambda_{k_r}}
\end{equation}
where $\boldsymbol{k}=(k_r)$.
 Using $(V_1)_{ik}^2\le 2/N$ (and $1/N$ for $k=0$) gives a uniform
numerator bound $(2/N)^n$. Let
		\[
		R:=\Big\lfloor \frac{N}{2\pi}\Big\rfloor.
		\]
		For $1\le k\le R$ we have $\frac{k\pi}{N}\le \frac{1}{2}$ and (by Taylor’s inequality) $2-2\cos x\ge \frac{x^2}{2}$ for $x\in[0,1/2]$. Hence
		\[
		\lambda_{k}\;\ge\;\frac{1}{2}\Big(\frac{\pi k}{N}\Big)^2,\qquad
		\sum_{r=1}^n \lambda_{k_r}\;\ge\;\frac{\pi^2}{2N^2}\,\|\boldsymbol{k}\|_2^2
		\quad\text{whenever }\|\boldsymbol{k}\|_\infty\le R,\ \boldsymbol{k}\neq \mathbf{0}.
		\]
		Then, we get
		\[
		\sum_{\substack{\boldsymbol{k}\neq \mathbf{0}\\ \|\boldsymbol{k}\|_\infty\le R}}
		\frac{\prod_{r=1}^n (V_1)_{\,i_r,k_r}^{\,2}}{\sum_{r=1}^n \lambda_{k_r}}
		\;\le\;
		\Big(\frac{2}{N}\Big)^{n}\cdot \frac{2N^2}{\pi^2}
		\sum_{\substack{\boldsymbol{k}\neq \mathbf{0}\\ \|\boldsymbol{k}\|_\infty\le R}}
		\frac{1}{\|\boldsymbol{k}\|_2^2}
		\;=\; \frac{C 2^n}{N^{n-2}}\,S_n(R),
		\]
		where
		\[
		S_n(R):=\sum_{\substack{\boldsymbol{k}\in\{0,\dots,R\}^n\\ \boldsymbol{k}\neq \mathbf{0}}}\frac{1}{\|\boldsymbol{k}\|_2^2}.
		\]
Next, we prove the following lemma.		
\begin{lemma}\label{lem:lattice}
    There exists $C_n>0$ such that
			\[
			S_n(R)\ \le\
			\begin{cases}
				C_1, & n=1,\\[2pt]
				C_2 \,\log R, & n=2,\\[2pt]
				C_n \,R^{\,n-2}, & n\ge 3.
			\end{cases}
			\]
\end{lemma}
\begin{proof}[Proof of Lemma~\ref{lem:lattice}]
			Partition into dyadic shells
			\[
			\mathcal{A}_j:=\{\boldsymbol{k}\in\mathbb{Z}_{\ge 0}^n: \ 2^j<\|\boldsymbol{k}\|_2\le 2^{j+1}\},\qquad j=0,1,\dots,\lfloor \log_2 R\rfloor.
			\]
			On $\mathcal{A}_j$ we have $\|\boldsymbol{k}\|_2^{-2}\le 2^{-2j}$, and $|\mathcal{A}_j|\le C'\,2^{(j+1)n}$ (crude cardinality bound by volume). Thus
			\[
			S_n(R)\;\le\; \sum_{j=0}^{\lfloor \log_2 R\rfloor}\frac{|\mathcal{A}_j|}{2^{2j}}
			\;\le\; C' 2^n \sum_{j=0}^{\lfloor \log_2 R\rfloor} 2^{j(n-2)}.
			\]
			The sum is $O(1)$ if $n=1$, equals $O(\log R)$ if $n=2$, and is $O(R^{n-2})$ if $n\ge 3$.
\end{proof}
		Lemma \ref{lem:lattice} implies
		\[
		\sum_{\substack{\boldsymbol{k}\neq \mathbf{0}\\ \|\boldsymbol{k}\|_\infty\le R}}
		\frac{\prod_{r=1}^n (V_1)_{\,i_r,k_r}^{\,2}}{\sum_{r=1}^n \lambda_{k_r}}
		\ \le\
		\begin{cases}
			C\,N, & n=1,\\[2pt]
			C\,\log N, & n=2,\\[2pt]
			C, & n\ge 3.
		\end{cases}
		\]
		
		If $\|\boldsymbol{k}\|_\infty>R$, then some $k_r\ge R+1$ and $\frac{k_r\pi}{N}\ge \frac{1}{2}$, hence $\lambda_{k_r}=2-2\cos\!\big(\frac{k_r\pi}{N}\big)\ge c_0$ for a universal $c_0>0$. Thus
		\[
		\sum_{r=1}^n \lambda_{k_r}\ \ge\ c_0,\qquad
		\frac{\prod_{r=1}^n (V_1)_{\,i_r,k_r}^{\,2}}{\sum_{r=1}^n \lambda_{k_r}}
		\ \le\ \frac{1}{c_0}\Big(\frac{2}{N}\Big)^{n}.
		\]
		Summing over at most $N^n$ indices gives the uniform bound
		\[
		\sum_{\|\boldsymbol{k}\|_\infty>R}\frac{\prod_{r=1}^n (V_1)_{\,i_r,k_r}^{\,2}}{\sum_{r=1}^n \lambda_{k_r}}
		\ \le\ \frac{1}{c_0}\Big(\frac{2}{N}\Big)^{n}N^n \;=\; \frac{2^n}{c_0}\;=\;C_n'.
		\]
		Combining the two blocks in  yields the result.
 \end{proof}

\section{Controlling $\Delta_G$}\label{appendix:control_Delta_gr}
%
%
%
\begin{lemma}[Control of $\Delta_G$ by edgewise Frobenius variation]\label{lem:deltaG-edge-Frob}
Let $G=([m],\mathcal E)$ be a connected graph and suppose each local system matrix is stable:
$\|A_\ell^\ast\|_2 \le \rho_{\max} < 1$ for all $\ell\in[m]$.  Define the controllability
aggregate
\[
G_\ell \;:=\; \sum_{t=1}^T \Gamma_{t-1}(A_\ell^\ast)
\quad\text{with}\quad
\Gamma_{t-1}(A) \;=\; \sum_{s=0}^{t-1} A^s (A^s)^{\!\top},
\]
and let $\bar G := \frac1m\sum_{\ell=1}^m G_\ell$.  Recall
\[
\Delta_G \;:=\; \max_{a,b\in[d]}\;\Bigg\{\sum_{\ell=1}^m\Big[(G_\ell)_{b,a}-(\bar G)_{b,a}\Big]^2\Bigg\}^{1/2}.
\]
Then
\[
\boxed{\;
\Delta_G \;\le\; \frac{L_T(\rho_{\max})}{\sqrt{\lambda_{m-1}(G)}}
\Bigg(\,\sum_{\{\ell,\ell'\}\in \calE}\,\|A_\ell^\ast-A_{\ell'}^\ast\|_F^2\Bigg)^{\!1/2}\;,
\qquad
L_T(\rho) \;:=\; 2\sum_{s=1}^{T-1}(T-s)s\,\rho^{\,2s-1}
\;\le\; \frac{2\rho\,T}{(1-\rho^2)^2}\,.
\;}
\]
\end{lemma}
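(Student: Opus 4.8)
The strategy is to pass from the entrywise dispersion to a Frobenius‑norm bound, express the difference $G_\ell - \bar G$ through pairwise differences $G_\ell - G_{\ell'}$ over the graph, and then separately control (a) the "graph transfer" step that relates $\sum_\ell \|G_\ell - \bar G\|_F^2$ to the edgewise sum $\sum_{\{\ell,\ell'\}\in\mathcal E}\|G_\ell - G_{\ell'}\|_F^2$, and (b) the "stability" step that bounds $\|G_\ell - G_{\ell'}\|_F$ in terms of $\|A_\ell^\ast - A_{\ell'}^\ast\|_F$. First I would observe that by definition of $\Delta_G$ as a max over entries, $\Delta_G^2 = \max_{a,b}\sum_\ell[(G_\ell)_{b,a}-(\bar G)_{b,a}]^2 \le \sum_\ell \sum_{a,b}[(G_\ell)_{b,a}-(\bar G)_{b,a}]^2 = \sum_\ell \|G_\ell - \bar G\|_F^2$. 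So it suffices to bound this Frobenius aggregate.

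For step (a): since $\bar G$ is the mean of the $G_\ell$, the vector $((G_\ell)_{b,a} - (\bar G)_{b,a})_{\ell=1}^m$ lies in the orthogonal complement of $\ones_m$ for each fixed $(a,b)$. Fixing the entry $(a,b)$ and writing $g = ((G_\ell)_{b,a})_\ell \in \matR^m$, we have $\sum_\ell (g_\ell - \bar g)^2 = \|(I - \tfrac1m\ones_m\ones_m^\top)g\|_2^2 = \|L^\dagger L g\|_2^2 \le \|L^\dagger\|_2 \cdot g^\top L g = \tfrac{1}{\lambda_{m-1}(G)}\sum_{\{\ell,\ell'\}\in\mathcal E}(g_\ell - g_{\ell'})^2$, using that $I - \tfrac1m\ones_m\ones_m^\top$ is the projection onto $\range(L)$, that $L^\dagger$ restricted there has operator norm $1/\lambda_{m-1}$, and that $g^\top L g = \sum_{\{\ell,\ell'\}\in\mathcal E}(g_\ell-g_{\ell'})^2$. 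Summing over all entries $(a,b)$ gives $\sum_\ell \|G_\ell - \bar G\|_F^2 \le \tfrac{1}{\lambda_{m-1}(G)}\sum_{\{\ell,\ell'\}\in\mathcal E}\|G_\ell - G_{\ell'}\|_F^2$.

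For step (b): I would expand $G_\ell - G_{\ell'} = \sum_{t=1}^T\sum_{s=0}^{t-1}\big[A_\ell^s(A_\ell^s)^\top - A_{\ell'}^s(A_{\ell'}^s)^\top\big]$, swap the order of summation to get the coefficient $(T-s)$ in front of the $s$‑th term (the term $s=0$ vanishes since $I - I = 0$), and bound each $\|A_\ell^s(A_\ell^s)^\top - A_{\ell'}^s(A_{\ell'}^s)^\top\|_F$ by a telescoping/Lipschitz argument: $A^sB^{s\top} - C^sD^{\cdot}$ — more precisely, write $A^s(A^s)^\top - B^s(B^s)^\top = (A^s - B^s)(A^s)^\top + B^s((A^s)^\top - (B^s)^\top)$, and then $\|A^s - B^s\|_F \le \sum_{j=0}^{s-1}\|A^j\|_2\|A - B\|_F\|B^{s-1-j}\|_2 \le s\,\rho_{\max}^{s-1}\|A-B\|_F$ using $\|A^j\|_2, \|B^j\|_2 \le \rho_{\max}^j$. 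Combining the two pieces and using $\|A^s\|_2,\|B^s\|_2 \le \rho_{\max}^s$ yields $\|A^s(A^s)^\top - B^s(B^s)^\top\|_F \le 2s\,\rho_{\max}^{2s-1}\|A-B\|_F$. Therefore $\|G_\ell - G_{\ell'}\|_F \le \big(\sum_{t=1}^T\sum_{s=1}^{t-1}2s\rho_{\max}^{2s-1}\big)\|A_\ell^\ast - A_{\ell'}^\ast\|_F = L_T(\rho_{\max})\|A_\ell^\ast - A_{\ell'}^\ast\|_F$ with $L_T(\rho) = 2\sum_{s=1}^{T-1}(T-s)s\rho^{2s-1}$. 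Substituting into step (a), taking square roots, and finally bounding $L_T(\rho) \le 2T\sum_{s\ge1}s\rho^{2s-1} = 2T\rho/(1-\rho^2)^2$ via the geometric series identity $\sum_{s\ge1}s x^{s-1} = 1/(1-x)^2$ with $x = \rho^2$ completes the argument. The main obstacle is the Frobenius‑norm Lipschitz estimate for the map $A \mapsto A^s(A^s)^\top$ in step (b): getting the sharp constant $2s\rho_{\max}^{2s-1}$ (rather than something cruder) requires careful bookkeeping of the telescoping sums, and then re‑indexing the double sum $\sum_{t=1}^T\sum_{s=1}^{t-1}$ into the closed form $L_T(\rho_{\max})$ with the correct $(T-s)$ weight.
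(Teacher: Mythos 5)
Your proof is correct, and it reaches the stated bound with exactly the same constant, but it orders the two main ingredients differently from the paper. You first pass from the entrywise maximum to $\sum_\ell\|G_\ell-\bar G\|_F^2$, apply the graph Poincar\'e inequality directly to the matrix-valued signal $(G_\ell)_\ell$ (your projection/$L^\dagger$ computation is a valid way to do this), and only then use the edgewise Lipschitz estimate $\|G_\ell-G_{\ell'}\|_F\le L_T(\rho_{\max})\|A^\ast_\ell-A^\ast_{\ell'}\|_F$. The paper instead works at the level of the $A$'s: since $\bar G\neq G(\bar A^\ast)$ (the map $A\mapsto\sum_t\Gamma_{t-1}(A)$ is nonlinear), it invokes the fact that the empirical mean minimizes squared deviations to replace $\bar G$ by $G(\bar A^\ast)$, applies the same Lipschitz bound nodewise against $\bar A^\ast$, and then uses the Poincar\'e inequality on $\sum_\ell\|A^\ast_\ell-\bar A^\ast\|_F^2$. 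Your route avoids the mean-minimization trick entirely because $\bar G$ really is the nodewise mean of the quantities you apply Poincar\'e to; the paper's route avoids having to state the Poincar\'e inequality for the derived signal $G_\ell$ and keeps the entrywise structure (it never sums over $(a,b)$, whereas you bound the max by the full Frobenius aggregate — a cruder intermediate step that happens not to cost anything in the final constant). Both the telescoping estimate $\|A^s(A^s)^\top-B^s(B^s)^\top\|_F\le 2s\rho_{\max}^{2s-1}\|A-B\|_F$, the reindexing giving the weight $(T-s)$, and the geometric-series bound on $L_T$ match the paper's.
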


\begin{proof}
\emph{Step 1 (entrywise Lipschitz control of $G(\cdot)$).}
Fix $a,b\in[d]$ and set $g_{a,b}(A):=e_b^\top G(A)e_a
= \sum_{t=1}^T\sum_{s=0}^{t-1} e_b^\top A^s(A^s)^{\!\top} e_a$.
For stable $A,B$ with $\|A\|_2,\|B\|_2\le\rho_{\max}$, the standard telescoping bound
\[
\|A^s - B^s\|_2 \;\le\; \sum_{r=0}^{s-1}\|A\|_2^{\,s-1-r}\,\|A-B\|_2\,\|B\|_2^{\,r}
\;\le\; s\,\rho_{\max}^{\,s-1}\,\|A-B\|_2
\]
implies
\[
\|A^s(A^s)^\top - B^s(B^s)^\top\|_2
\;\le\; \|A^s\|_2\|A^s-B^s\|_2 + \|A^s-B^s\|_2\|B^s\|_2
\;\le\; 2s\,\rho_{\max}^{\,2s-1}\|A-B\|_2.
\]
Hence
\begin{equation}\label{bound_g_(a,b)}
    \big|g_{a,b}(A) - g_{a,b}(B)\big|
\;\le\; \sum_{t=1}^T \sum_{s=1}^{t-1} 2s\,\rho_{\max}^{\,2s-1}\,\|A-B\|_2
\;=\; L_T(\rho_{\max})\,\|A-B\|_2
\;\le\; L_T(\rho_{\max})\,\|A-B\|_F.
\end{equation}

\smallskip
\emph{Step 2 (from Lipschitz to nodewise variance).}
Let $\bar A^\ast := \frac1m\sum_{\ell=1}^m A_\ell^\ast$. The empirical mean minimizes squared deviations,
so for each fixed $(a,b)$,
\[
\sum_{\ell=1}^m \Big[(G_\ell)_{b,a}-(\bar G)_{b,a}\Big]^2
\;\le\; \sum_{\ell=1}^m \Big[(G_\ell)_{b,a}-g_{a,b}(\bar A^\ast)\Big]^2.
\]
Apply Step 1 entrywise with $A=A_\ell^\ast$ and $B=\bar A^\ast$ to get
\[
\sum_{\ell=1}^m \Big[(G_\ell)_{b,a}-(\bar G)_{b,a}\Big]^2
\;\le\; L_T^2(\rho_{\max}) \sum_{\ell=1}^m \|A_\ell^\ast-\bar A^\ast\|_F^2.
\]

\smallskip
\emph{Step 3 (graph Poincaré, matrix‑valued signal).}
For any matrix‑valued signal on the nodes, the graph Poincaré inequality yields
\[
\sum_{\ell=1}^m \|A_\ell^\ast - \bar A^\ast\|_F^2
\;\le\; \frac{1}{\lambda_{m-1}(G)}\sum_{\{\ell,\ell'\}\in \calE}\|A_\ell^\ast-A_{\ell'}^\ast\|_F^2.
\]
Combining Step 2 with Step 3 and taking the maximum over $(a,b)$ we obtain the stated bound.
\end{proof}

%

\newpage
\bibliographystyle{plain}
\bibliography{references}

\end{document}